\documentclass{article}

\usepackage{amsthm}
\usepackage{amsfonts}
\usepackage{amsmath}
\usepackage{mathtools}
\usepackage{graphicx}
\usepackage{xspace}
\usepackage{url}
\usepackage{amssymb, xcolor}
\usepackage{enumerate}
\usepackage{enumitem}

\newtheorem{Theorem}{Theorem}
\newtheorem{Proposition}[Theorem]{Proposition}
\newtheorem{Corollary}[Theorem]{Corollary}
\newtheorem{Lemma}[Theorem]{Lemma}
\newtheorem{Example}{Example}
\newtheorem{Remark}{Remark}
\newtheorem{Definition}{Definition}
\newtheorem{Assumption}{Assumption}

\newcommand{\bd}{\begin{displaymath}}
\newcommand{\ed}{\end{displaymath}}
\newcommand{\be}{\begin{equation}}
\newcommand{\ee}{\end{equation}}

\newcommand{\Uad}{\mathcal{U}}
\newcommand{\Pb}{\mbox{\rm (P)}\xspace}

\newcommand{\dx}{\,\mathrm{d}x}
\newcommand{\dt}{\,\mathrm{d}t}

\newcommand{\llangle}{\big\langle}
\newcommand{\rrangle}{\big\rangle}
\newcommand{\reff}{\eqref}

\setlength\oddsidemargin {-15pt}\setlength\evensidemargin{-15pt}
\setlength{\textwidth}{170mm}\setlength{\textheight}{210mm}
\headheight=12.45pt

\numberwithin{equation}{section}
\title{On the solution stability of parabolic optimal control problems\thanks{The first author and the second author were supported by the Austrian Science Foundation (FWF) under grant No I4571.}}

\author{Alberto Dom\'inguez Corella\thanks{Institute of Statistics and Mathematical Methods in Economics, Vienna University of Technology, Austria, {\tt alberto.corella@tuwien.ac.at}}
\and Nicolai Jork\thanks{Institute of Statistics and Mathematical Methods in Economics, Vienna University of Technology, Austria, {\tt nicolai.jork@tuwien.ac.at}}
\and Vladimir M. Veliov\thanks{Institute of Statistics and Mathematical Methods in Economics, Vienna University of Technology, Austria, {\tt vladimir.veliov@tuwien.ac.at}}}

\begin{document}

\maketitle

\begin{abstract}
The paper investigates stability properties of solutions of optimal control problems 
for semilinear parabolic partial differential equations. Hölder or Lipschitz dependence of the optimal solution 
on perturbations are obtained for problems in which the equation and 
the objective functional are affine with respect to the control. The perturbations may appear in both the equation and in the 
objective functional and may nonlinearly depend on the state and control variables. 
The main results are based on an extension of recently introduced assumptions on the
joint growth of the first and second variation of the objective functional. The stability of the optimal solution is obtained
as a consequence of a more general result obtained in the paper -- the proved metric subregularity of the mapping associated with the system of 
first order necessary optimality conditions. This property also enables error estimates for approximation methods.
Lipschitz estimate for the dependence of the optimal control on the Tikhonov regularization 
parameter is obtained as a by-product.
\end{abstract}

\pagestyle{myheadings} \thispagestyle{plain} \markboth{A.~DOM\'{I}NGUEZ CORELLA, N.~JORK AND Vladimir M. Veliov}{}

\section{Introduction} \label{SIntro}

Let $\Omega\subset\mathbb R^n$, $1\le n\le 3$, be a bounded domain with Lipschitz boundary $\partial \Omega$. 
For a finite $T>0$, denote by $Q:=\Omega\times (0,T)$ the space-time cylinder and by 
$\Sigma:=\partial \Omega \times (0,T)$ its lateral boundary.
In the present paper, we investigate the following optimal control problem:
	\begin{equation}
 		  \Pb \  \ \min_{u \in \Uad}\bigg\{ J(u) := \int_Q L(x,t,y(x,t),u(x,t))\dx\dt\bigg\},
	\label{ocp1}
	\end{equation}
subject to
	\begin{equation}
		\left\{ \begin{array}{lll}
		\frac{\partial y}{\partial t}+\mathcal Ay+f(\cdot,y)& =u\  &\text{ in }\ Q,\\
		y=0 \text{ on } \Sigma,\quad y(\cdot,0)& =  y_0\ &\text{ on } \Omega.\\
	\end{array} \right.
	\label{see1}
	\end{equation}
Denote by $y_u$ the unique solution to the semilinear parabolic equation \eqref{see1} that corresponds to control 
$u\in L^r(Q)$, where $r$ is a fixed number satisfying the inequality $r>1+\frac{n}{2}$. 
For functions $u_a, u_b\in L^\infty(Q)$ such that $ u_{a} < u_{b} $ a.e in $ Q$, 
the set of feasible controls is given by 
	\begin{equation}
		\Uad := \{u \in L^\infty(Q) \vert \ u_a \le u \le u_b\ \text{ for a.a. } (x,t) \in Q\}.
	\label{const}
	\end{equation}
The objective integrand in \reff{ocp1} is defined as
	\begin{equation} \label{genobj}
		L(x,t,y,u):=L_0(x,t,y)+(my+g)u,
	\end{equation}
where $m$ is a number, $g$ is a function in $L^\infty(Q)$ and $L_0$ satisfies appropriate smoothness condition
(see Assumption \ref{exist.2} in Subsection \ref{SSprel}).

The goal of the present paper is to obtain stability results for the optimal solution of problem \reff{ocp1}--\reff{const}.
The meaning of ``stability'' we focus on, is as follows. Given a reference optimal control $\bar u$ 
and the corresponding solution $y_{\bar u}$,
the goal is to estimate the distance (call it $\Delta$) from the optimal solutions $(u,y_u)$ of a disturbed version of problem 
\reff{ocp1}--\reff{const} to the pair $(\bar u, y_{\bar u})$, in terms of the size of the perturbations 
(call it $\delta$). The perturbations may enter either in the objective integrand
or in the state equation, and the meaning of ``distance'' and ``size'' in the previous sentence will be clarified in the sequel
in terms of appropriate norms. If an estimation $\Delta \leq {\rm const.} \delta^\theta$ holds with $\theta \in (0,1)$, we talk 
about {\em H\"older stability}, while in the case $\theta = 1$ we have {\em Lipschitz stability}. 

A powerful technique for establishing stability properties of the solutions of optimization problems is based on regularity properties 
of the system of first order necessary optimality conditions (see e.g. \cite{DR}). 
In the case of problem \reff{ocp1}--\reff{const}, these are represented by a {\em differential variational inequality} (see e.g.
\cite{CDKV-17,PSS}), consisting of two parabolic equations (the primal equation \reff{ocp1} and the corresponding adjoint
equation) and one variational inequality representing the condition for minimization of the Hamiltonian associated with the problem.
The Lipschitz or H\"older stability of the solution of problem \reff{ocp1}--\reff{const} is then a consequence of the property 
of {\em metric subregularity} (see \cite{CDK,DR}) of the mapping defining this differential variational inequality.  
An advantage of this approach is that it unifies 
in a compact way the study of 
stability of optimal solutions under a variety of perturbations (linear or nonlinear). 
Therefore, the main result in the present paper focuses on conditions for metric subregularity 
of the mapping associated with the 
first order optimality conditions for problem \reff{ocp1}--\reff{const}. 
These conditions are related to appropriate second order sufficient optimality conditions, which  are revisited and extended 
in the paper. Several results for stability of the solutions are obtained as a consequence.

The commonly used second order sufficient optimality conditions for ODE or PDE optimal control problems involve a
{\em coercivity condition}, requiring strong positive definiteness of the objective functional as a function of the control in a Hilbert space.
We stress that problem \reff{ocp1}--\reff{const} is affine with respect to the control variable and such a coercivity condition
is not fulfilled. The theory of sufficient optimality theory and the regularity theory for affine optimal control of ODE systems 
have been developed in the past decade,
see \cite{OSVE} and the bibliography therein. Sufficient conditions for weak or strong local optimality for optimal control problems with constraints given by elliptic or parabolic equations 
are developed in \cite{CDJ2022,Casas-Mateos2020,Casas12, CMR,CT16,CWW, DJV2022}.
A detailed discussion thereof is provided in Section \ref{CS}. In contrast with the elliptic setting, there 
are only a few stability results for semilinear parabolic optimal control problems. Progress in this regard for a tracking type objective
functional was made for instance in \cite{CRT15,CT16} where stability with respect to perturbations in the objective functional was studied, 
and in \cite{CT22}, where stability with respect to perturbations in the initial data was 
investigated. 
We mention that for a linear state 
equation and a tracking type objective functional, Lipschitz estimates were obtained in \cite{VD18}
under an additional assumption on the structure of the 
optimal control. More comprehensive discussion about the sufficiency theory and stability can be found in Section \ref{SProblem}.

The main novelty in the present paper is the study of the subregularity property of the optimality mapping associated with problem 
 \reff{ocp1}--\reff{const}.  In contrast with the case of coercive problems, our assumptions in the affine case
jointly involve the first and the second order variations of the objective functional
with respect to the control. These assumptions are weaker than the ones in the existing literature in the context of
sufficient optimality conditions, however, they are strong enough to imply metric subregularity of the optimality mapping.
The subregularity result is used to obtain new H\"older- and Lipschitz estimates for the solution 
of the considered optimal control problem. An error estimate for the Tikhonov regularization is obtained as a consequence.
\\

The  obtained subregularity result provides a base for convergence and error analysis for discretization methods 
applied to problem \reff{ocp1}--\reff{const}. The point is, that numerical solutions of the discretized versions of the problem
typically satisfy approximately first order optimality conditions for the discretized problem and after appropriate embedding 
in the continuous setting  \reff{ocp1}--\reff{const}, satisfy the optimality conditions 
for the latter problem with a residual
depending on the approximation and the discretization error. Then the subregularity property of the optimality mapping associated
with  \reff{ocp1}--\reff{const} provides an error estimate. Notice that the (Lipschitz) stability of the solution alone is not enough 
for such a conclusion, and this is an important motivation for studying subregularity of the optimality mapping
rather than only stability of the solutions. However, we do not go into this subject, 
postponing it to a later paper based on the present one.   \\

The paper is organized as follows.
The analysis of the optimal control problem \reff{ocp1}--\reff{const} begins in  Section \ref{SProblem}.
We recall the state of the art regarding second order sufficient conditions for weak and strong (local) optimality, as well as known sufficient conditions for
stability of optimal controls and states under perturbations.  In Section \ref{SUni} we formulate and discuss the assumptions
on which our further analysis on sufficiency and stability is based. 
The strong subregularity of the optimality mapping is proved in Section~\ref{SSR}.  
In Section \ref{Stability}, we obtain stability results for the optimal control problem under non-linear perturbations, postponing some technicalities to Assumption \ref{secA1}. 
Finally, we support the theoretical results with some examples.


\subsection{Preliminaries} \label{SSprel}

We begin with some basic notations and definitions.
Given a non-empty, bounded and Lebesgue measurable set $X\subset \mathbb R^n$, 
we denote by $L^p(X)$, $1\leq p\leq\infty$, the Banach spaces of all measurable 
functions $f : X \to  \mathbb R$ for which the usual norm $\| f\|_{L^p(X)}$ is finite.
For a bounded Lipschitz domain $X\subset \mathbb R^n$ (that is, a set with Lipschitz boundary), 
the Sobolev space $H^1_0(X)$ consists of functions that vanish on the boundary (in the trace sense)
and that have weak first order derivatives in $L^2(X)$.
The space $H_0^1(X)$ is equipped with its usual norm denoted by $\|\cdot\|_{H^1_0(X)}$. By $ H^{-1}(X)$ we denote the topological dual of $H_0^1(X)$,
equipped with the standard norm $\|\cdot\|_{H^{-1}(X)}$.
Given a real Banach space $Z$, the space $L^p(0,T\text{; } Z)$ consist of all strongly measurable functions 
$y:[0,T]\to Z$ that satisfy 
\begin{align*}
		\|y\|   _{L^p(0,T\text{; }Z)}:=\Big(\int_0^T \|   y(t) \|_{Z}^p\dt\Big)^{\frac{1}{p}}<\infty
                      \qquad \mbox{if } \; 1\leq p<\infty,
\end{align*}
or, for $p=\infty$,
\begin{align*}
		\|y\|_{L^\infty(0,T\text{; }Z)}:=\text{inf}\{M\in\mathbb{R}\ \vert\ \|y(t)\|_{Z}\leq M \text{ a.e } t\in (0,T)\}<\infty.
\end{align*}
The Hilbert space $W(0,T)$ consists of all of functions in $L^2(0,T\text{; }H^1_0(\Omega))$ that have a distributional derivative 
in $L^2(0,T\text{; }H^{-1}(\Omega))$, i.e. 
	\[
		W(0,T):=\Bigg\{ y\in L^2(0,T;H^1_0(\Omega))\Big\vert  \ \frac{\partial y}{\partial t}\in L^2(0,T;H^{-1}(\Omega)) \Bigg\},
	\]
which is endowed with the norm
	\[
		\|   y\|   _{W(0,T)}:=\|    y\|   _{L^2(0,T;H^1_0(\Omega))}+\|    \partial y/\partial t \|   _{L^2(0,T;H^{-1}(\Omega))}.
	\]
The Banach space $C([0,T]\text{; }L^2(\Omega))$ consists of all continuous functions $y:[0,T]\to L^2(\Omega)$ 
and is equipped with the norm $\max_{t\in [0,T]}\|y(t)\| _{L^2(\Omega)}$. It is well known that $W(0,T)$ is continuously 
embedded in $C([0,T]\text{; }L^2(\Omega))$ and compactly embedded in $L^2(Q)$. 
For proofs and further details regarding spaces involving time, see \cite{Chipot,Evans,Showalter,V03,Wloka}.\\


The following assumptions, close to those in  \cite{CDJ2022,CM2020,CM2021,CMR,CT16,CT22,CWW,CWW2018},
are standing in all the paper, together with the inequality 
\be \label{Ern}
        r> \max \Big\{2,1+\frac{n}{2} \Big\}      
\ee
for the real number $r$ that appears in some assumptions and many statements below (we also remind that $n \in\{1,2,3\}$). 

\begin{Assumption} \label{exist.1}
The operator $\mathcal A :H^1_0(\Omega)\to H^{-1}(\Omega)$, is given by
		\[
			\mathcal A =-\sum_{i,j=1}^{n}\partial_{x_j}(a_{i,j}(x)\partial_{x_i} y),
		\]
where $a_{i,j}\in L^{\infty}(\Omega)$ satisfy the uniform ellipticity condition
		\[
			\exists \lambda_{\mathcal{A}}>0:\ \lambda_{\mathcal{A}}\vert\xi\vert^2\leq \sum_{i,j=1}^{n} a_{i,j}(x)\xi_{i}\xi_{j} \;\;
               \forall \xi\in \mathbb{R}^n \;\; 	 \textrm{and a.a.}\ x\in \Omega.
		\]
The matrix with components $a_{i,j}$ is denoted by $A$. 
\end{Assumption}     

The functions $f, \, L_0 : Q \times \mathbb R\longrightarrow \mathbb{R}$ of the variables $(x,t,y)$,
and the ``initial'' function $y_0$ have the following properties.

\begin{Assumption}  \label{exist.2}
For every $y \in \mathbb R$, the functions $f(\cdot,\cdot,y) \in L^{r}(Q)$,  $L_0(\cdot,\cdot,y) \in L^{1}(Q)$,  and
$y_0 \in L^{\infty}(\Omega)$. 
For a.e. $(x,t) \in Q$ the first and the second derivatives of $f$ and $L_0$ with respect to $y$ exist and are locally 
bounded and locally Lipschitz continuous, uniformly with respect to $(x,t) \in Q$. Moreover, 
$ \frac{\partial f}{\partial y}(x,t,y) \geq 0$ for a.e. $(x,t) \in Q$ and for all $y\in \mathbb R$.
\end{Assumption}  

\begin{Remark}
The last condition in Assumption \ref{exist.2} can be relaxed in the following way:
	\begin{equation}
		\exists C_{f}\in \mathbb R:\ 
		\frac{\partial f}{\partial y}(x,t,y)\geq  C_f
		\ \textrm{ a.a. }(x,t)\in Q \textrm{ and }\forall y\in \mathbb R,
	\label{lbg}
	\end{equation}
see \cite{Casas-Mateos2020,CMR}. However, this leads to complications in the proofs.
\end{Remark}

\subsection{Facts regarding the linear and the semilinear equation}

Let $0\leq\alpha\in L^{\infty}(Q)$ and $u\in L^2(Q)$. We first consider solutions of the following linear variational equality 
for $y \in W(0,T)$ with $y(\cdot,0)=0$:
\begin{align}   \label{lin.1}
		\int_{0}^{T}\llangle \frac{\partial y}{\partial t},\psi \rrangle\,dt+\int_{0}^{T}\langle \mathcal A y, \psi\rangle\,dt
     =\int_{0}^{T}\langle u,\psi\rangle\,dt-\int_{0}^{T}  \langle \alpha y,\psi\rangle\,dt
\end{align}
for all $\psi\in L^2(0,T, H^1_0(\Omega))$, that is, for weak solutions of the equation \eqref{see1} with $f(x,t,y):=\alpha(x,t) y$ and $y_0=0$.

\begin{Theorem}
Let $0\leq \alpha\in L^\infty(Q)$ be given.
\begin{enumerate}
	\item For each $u\in L^2(Q)$ the linear parabolic equation \eqref{lin.1} has a unique weak solution $y_{u}\in W(0,T)$. 
Moreover, there exists a constant $\hat C>0$  independent of $u$ and $\alpha$ such that
	\begin{equation}
		\|y_u\| _{L^2(0,T,H^1_0(\Omega))}\leq \hat C \|u\| _{L^{2}(Q)}.
	\label{wl2}
	\end{equation}
	\label{uniqueexlin}
\item If, additionally, $u\in L^{r}(Q)$ (we remind \reff{Ern}) then the weak solution $y_u$ of \eqref{lin.1} 
belongs to $W(0,T)\cap C(\bar{Q})$. Moreover, there exists a constant $C_r>0$ independent of $u$ and  $\alpha$ such that
	\begin{equation}
		\|y_u\|_{L^2(0,T,H^1_0(\Omega))}+\|y_u\|_{C(\bar Q)}\leq C_r \|u\| _{L^{r}(Q)}.
	\label{clr}
	\end{equation}
	\label{C-est}
\end{enumerate}
\label{mainex}
\end{Theorem}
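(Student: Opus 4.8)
The plan is to treat \eqref{lin.1} as an abstract linear parabolic Cauchy problem governed by the time-independent bilinear form $a(\phi,\psi):=\langle\mathcal A\phi,\psi\rangle+\langle\alpha\phi,\psi\rangle$ on $H^1_0(\Omega)$. For Part~1 I would first check that $a$ is bounded on $H^1_0(\Omega)\times H^1_0(\Omega)$ (immediate from $a_{i,j},\alpha\in L^\infty$) and coercive: the uniform ellipticity in Assumption~\ref{exist.1} gives $\langle\mathcal A\phi,\phi\rangle\ge\lambda_{\mathcal A}\|\nabla\phi\|_{L^2(\Omega)}^2$, and since $\alpha\ge0$ the term $\langle\alpha\phi,\phi\rangle$ is nonnegative, so together with Poincaré's inequality $a(\phi,\phi)\ge c\,\|\phi\|_{H^1_0(\Omega)}^2$ with $c$ depending only on $\lambda_{\mathcal A}$ and $\Omega$, not on $\alpha$. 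Existence and uniqueness of $y_u\in W(0,T)$ with $y_u(\cdot,0)=0$ then follow from the standard Lions--Galerkin theory for coercive parabolic forms (see e.g. \cite{Evans,Wloka}).

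For the estimate \eqref{wl2} I would test \eqref{lin.1} with $\psi=y_u$ on $(0,t)$; using $W(0,T)\hookrightarrow C([0,T];L^2(\Omega))$ gives $\int_0^t\langle\partial_s y,y\rangle\,\mathrm ds=\tfrac12\|y(t)\|_{L^2(\Omega)}^2$, and dropping the nonnegative term $\int_0^t\langle\alpha y,y\rangle\,\mathrm ds\ge0$ (this is the step that renders the constant independent of $\alpha$) leaves $\tfrac12\|y(t)\|_{L^2(\Omega)}^2+\lambda_{\mathcal A}\int_0^t\|\nabla y\|_{L^2(\Omega)}^2\,\mathrm ds\le\int_0^t\langle u,y\rangle\,\mathrm ds$. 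Bounding the right-hand side by Cauchy--Schwarz and then invoking Poincaré to absorb $\|y\|_{L^2(Q)}$ into $\|\nabla y\|_{L^2(Q)}$ yields \eqref{wl2} with $\hat C=\hat C(\lambda_{\mathcal A},\Omega)$.

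The substantive part is Part~2. Note first that $r>2$ in \eqref{Ern} guarantees $u\in L^2(Q)$, so Part~1 applies; the extra information $r>1+\tfrac n2=\tfrac{n+2}2$ is what buys the $L^\infty$ bound. I would obtain it by the Stampacchia/De Giorgi truncation method: for a level $k\ge0$, test the equation with the admissible function $(y-k)^+\in L^2(0,T;H^1_0(\Omega))$. The decisive observation is again the sign of $\alpha$: on the set $\{y>k\ge0\}$ one has $\alpha y\,(y-k)^+\ge0$, so $\int\langle\alpha y,(y-k)^+\rangle\ge0$ can be discarded, which is exactly what keeps the resulting bound independent of $\alpha$. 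Estimating the source term $\int\langle u,(y-k)^+\rangle$ by Hölder's inequality and running the De Giorgi iteration over the level sets, the hypothesis $r>\tfrac{n+2}2$ is precisely the threshold (the parabolic scaling dimension being $n+2$) that makes the iteration converge and produces $\sup_Q y\le C_r\|u\|_{L^r(Q)}$ with $C_r$ independent of $\alpha$. Applying the same argument to $-y$, which solves \eqref{lin.1} with right-hand side $-u$ and the \emph{same} $\alpha$, bounds $\sup_Q(-y)$ as well, giving $\|y_u\|_{L^\infty(Q)}\le C_r\|u\|_{L^r(Q)}$.

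To upgrade $L^\infty$ to $C(\bar Q)$ I would note that, with $y_u\in L^\infty(Q)$, the effective right-hand side $u-\alpha y_u$ lies in $L^r(Q)$, so the parabolic De Giorgi--Nash--Moser theory for divergence-form operators with bounded measurable coefficients yields interior and, using $y_0=0$, the homogeneous Dirichlet data, and the Lipschitz regularity of $\Omega$, up-to-the-boundary Hölder continuity; hence $y_u\in C(\bar Q)$. Only the supremum-norm bound must be $\alpha$-uniform, and that is already furnished by the truncation step, so combining it with \eqref{wl2} gives \eqref{clr}. The main obstacle is this Part~2: carrying out the De Giorgi iteration at the critical exponent and, above all, tracking the sign conditions so that every constant stays independent of $\alpha$; the continuity itself is then a citation of standard parabolic regularity (e.g. \cite{Evans}, or the monograph of Ladyzhenskaya, Solonnikov and Ural'tseva).
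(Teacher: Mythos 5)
Your proposal is correct, but it proves the $\alpha$-uniformity by a genuinely different mechanism than the paper. The paper cites the standard existence, energy, and $C(\bar Q)$ results (Tr\"oltzsch, Thms.\ 3.13 and 5.5) and devotes its proof solely to the independence of $\hat C$ and $C_r$ from $\alpha$, which it obtains by a comparison argument: decompose $u=u^+-u^-$, apply the weak maximum principle twice to get the pointwise domination $0\le y_{\alpha,u^\pm}\le y_{0,u^\pm}$, and thereby inherit the known constants of the $\alpha\equiv 0$ problem (at the cost of a factor $2$). You instead rerun the underlying estimates from scratch and exploit the sign of $\alpha$ \emph{inside the test-function identities}: dropping $\int\alpha y^2\ge 0$ in the energy estimate for \eqref{wl2}, and dropping $\int \alpha y\,(y-k)^+\ge 0$ (valid since all De Giorgi levels $k$ are nonnegative) in the truncation step for the $L^\infty$ bound, with the symmetric argument for $-y$. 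Both routes hinge on $\alpha\ge 0$; yours is self-contained and avoids the maximum principle and the positive/negative splitting, but requires actually carrying out the De Giorgi iteration at the exponent $r>1+\tfrac n2$, which is the nontrivial analytic work the paper deliberately offloads to the cited $\alpha=0$ results. You also correctly isolate the one delicate point in the continuity step: the qualitative H\"older continuity may depend on $\alpha$ (via $\|\alpha y_u\|_{L^r(Q)}$ or the zeroth-order coefficient), but only the sup-norm bound must be $\alpha$-uniform, and that is supplied by the truncation argument. One minor remark: for the $L^2(0,T;H^1_0(\Omega))$ part of \eqref{clr} your direct energy estimate is arguably cleaner than the paper's comparison route, since the maximum principle yields pointwise but not gradient bounds, which is why the paper has to appeal separately to the arguments of its elliptic predecessor for that piece.
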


Besides the independence of the constants $\hat C,$ and $C_r$ on $\alpha$ all claims of the theorem are well known, 
see \cite[Theorem 3.13, Theorem 5.5]{Troltzsch2010}. A proof of a similar independence statement can be found 
in \cite{CDJ2022} for a linear elliptic PDE of non-monotone type. 

\begin{proof}  For convenience of the reader, we prove that the estimates 
are independent of $\alpha$. This is done along the lines of the proof of \cite[Lemma 2.2]{CDJ2022}.
By $y_{0,u}$ we denote a solution of \eqref{lin.1} for $\alpha\equiv0$. It is well known that in this case 
there exist constants $C_r, \hat C>0$ such that
	\begin{equation*}
		\|y_{0,u}\|_{C(\bar Q)}\leq C_r \|u \|_{L^r(Q)},\ \ \|y_{0,u} \|_{L^2(Q)}\leq \hat C \|u\|_{L^2(Q)}.
	\end{equation*}
To apply this, we decompose $u$ in positive and negative parts, $u=u^+-u^-$, $u^+, u^-\ge 0$. By the weak maximum
 principle \cite[Theorem 11.9]{Chipot}, it follows that $y_{\alpha,u^+}, y_{\alpha,u^-}\ge 0$.
Again by the weak maximum principle, the equation
\begin{equation*}
		\frac{\partial }{\partial t}(y_{\alpha,u^+}-y_{0,u^+})+\mathcal A(y_{\alpha,u^+}-y_{0,u^+}) +\alpha (y_{\alpha,u^+}-y_{0,u^+})
       =-\alpha   y_{0,u^+}
\end{equation*}
implies $0\leq y_{\alpha,u^+}\le y_{0,u^+},
	\label{maxp}$
thus $\|     y_{\alpha,u^+}\|   _{C(\bar Q)} \le \|    y_{0,u^+}\|   _{C(\bar Q)}$. By the same reasoning, it follows that 
$0\leq y_{\alpha,u^-}\le y_{0,u^-}$ and $\|y_{\alpha,u^-}\|   _{C(\bar Q)} \le \|    y_{0,u^-}\|   _{C(\bar Q)}$. Hence,
	\begin{equation*}
	\begin{aligned}
		\|     y_{\alpha,u}\|   _{C(\bar Q)}&\le   \|    y_{\alpha,u^+}\|   _{C(\bar Q)} +  \|    
       y_{\alpha,u^-}\|   _{C(\bar Q)}\le \|    y_{0,u^+}\|   _{C(\bar Q)} +  \|
		 y_{0,u^-}\|   _{C(\bar Q)}\\
		&\le C_r(\|   u^+ \|   _{L^r(Q)} +\|    u^-\|   _{L^r(Q)} )\le 2C_r \|    u\|   _{L^r(Q)}.
	\end{aligned}
	\end{equation*}
The estimate for $L^2(0,T,H^1_0(\Omega))$ can be obtained by similar arguments as in \cite{CDJ2022}.
\end{proof}

The next lemma is motivated by an analogous result for linear elliptic equations \cite[Lemma 2.3]{CDJ2022}, 
although, according to the nature of the parabolic setting, the interval of feasible numbers $s$, is smaller.

\begin{Lemma} \label{estLs}
Let $u\in L^{r}(Q)$ and $0\leq \alpha\in L^\infty(Q)$. Let $y_u$ be the unique solution of \eqref{lin.1} and 
let $p_u$ be a solution of the problem
\begin{align}
		\left\{\begin{array}{l}
		-\frac{\partial p}{\partial t}+\mathcal A^*p+\alpha p = u\  \text{ in }\ Q,\\
		p=0 \text{ on } \Sigma,\ p(\cdot,T)=0\ \text{ on } \Omega.
	\end{array} \right.
	\label{adjlin}
\end{align}
Then, for any $s_n\in [1,\frac{n+2}{n})$ there exists a constant $C_{s'_{n}}>0$ independent of $u$ and $\alpha$ such that 
	\begin{equation}
		\max\{\|y_u\|_{L^{s_n}(Q)},\| p_u\|   _{L^{s_n}(Q)}\} \leq C_{s'_{n}}\|u\|_{L^{1}(Q)}.
	\label{aeqestLs}	
	\end{equation}
Here $s'_n$ denotes the H\"older conjugate of $s_n$.
\end{Lemma}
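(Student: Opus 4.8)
The plan is to obtain the bound by a duality argument, exploiting the fact that the Hölder conjugate $s'_n$ of any $s_n\in[1,\frac{n+2}{n})$ satisfies $s'_n>\frac{n+2}{2}=1+\frac n2$, which is precisely the threshold above which a right-hand side in $L^{s'_n}(Q)$ produces a continuous (hence bounded) solution of the parabolic equation. I would treat $y_u$ in full detail; the estimate for $p_u$ follows in exactly the same way after the time reversal $t\mapsto T-t$, which interchanges the forward problem \eqref{lin.1} with the backward problem \eqref{adjlin} and replaces $\mathcal A$ by $\mathcal A^*$ (an operator of the same type, with the same ellipticity constant).

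First I would represent the norm by duality, $\|y_u\|_{L^{s_n}(Q)}=\sup\int_Q y_u\,\phi\dx\dt$, the supremum being taken over $\phi\in L^{s'_n}(Q)$ with $\|\phi\|_{L^{s'_n}(Q)}\le 1$. Since $u\in L^r(Q)$ with $r$ as in \eqref{Ern}, estimate \eqref{clr} already guarantees $y_u\in C(\bar Q)\subset L^{s_n}(Q)$, so this functional is well defined and $L^{s'_n}$-continuous, and by density it suffices to take $\phi\in L^\infty(Q)$ in the supremum. For each such $\phi$ I would introduce the auxiliary backward problem $-\partial_t z+\mathcal A^* z+\alpha z=\phi$, $z=0$ on $\Sigma$, $z(\cdot,T)=0$, that is, problem \eqref{adjlin} with right-hand side $\phi$. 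Because $\phi\in L^\infty(Q)\subset L^2(Q)$ and $s'_n>1+\frac n2$, the time-reversed version of estimate \eqref{clr} yields $z\in W(0,T)\cap C(\bar Q)$ together with the $\alpha$-independent bound $\|z\|_{C(\bar Q)}\le C_{s'_n}\|\phi\|_{L^{s'_n}(Q)}$.

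The core computation is the duality identity $\int_Q y_u\,\phi\dx\dt=\int_Q u\,z\dx\dt$. I would obtain it by testing the weak form of \eqref{lin.1} for $y_u$ with $\psi=z$ and the weak form of the $z$-equation with $\psi=y_u$, and then subtracting: the pairing with $\mathcal A$ cancels against the pairing with $\mathcal A^*$ (the two reduce to the same symmetric Dirichlet form) and the $\alpha$-terms cancel, while the two time-derivative terms combine into $[\langle y_u,z\rangle]_0^T$, which vanishes thanks to $y_u(\cdot,0)=0$ and $z(\cdot,T)=0$. Once this identity is in hand, $|\int_Q y_u\,\phi|=|\int_Q u\,z|\le\|u\|_{L^1(Q)}\|z\|_{C(\bar Q)}\le C_{s'_n}\|u\|_{L^1(Q)}\|\phi\|_{L^{s'_n}(Q)}$, and taking the supremum over the admissible $\phi$ gives $\|y_u\|_{L^{s_n}(Q)}\le C_{s'_n}\|u\|_{L^1(Q)}$, with a constant inherited from the continuity estimate and hence independent of $u$ and $\alpha$.

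The main obstacle is not the duality bookkeeping but ensuring that the continuity estimate, \emph{with an $\alpha$-independent constant}, is actually available at the exponent $s'_n$. The relevant threshold here is $s'_n>1+\frac n2$ (equivalently $s_n<\frac{n+2}{n}$, the standing hypothesis), and \emph{not} the condition $r>2$ in \eqref{Ern}; indeed for $n=1$ one meets exponents $s'_n\in(\tfrac32,2)$ lying below $2$. I would therefore point out that the maximum-principle reduction used in the proof of Theorem~\ref{mainex} — decomposing $\phi=\phi^+-\phi^-$ and comparing the solution for $\alpha\ge 0$ with the one for $\alpha\equiv 0$ — delivers the $\alpha$-independent $C(\bar Q)$ bound for \emph{every} exponent exceeding $1+\frac n2$, so it applies verbatim to $s'_n$. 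Restricting the test functions to $L^\infty(Q)$ (dense in $L^{s'_n}(Q)$) is what keeps $z\in W(0,T)$ and legitimizes the integration by parts, after which the estimate passes to all of $L^{s'_n}(Q)$ by continuity.
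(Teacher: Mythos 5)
Your proposal is correct and follows essentially the same route as the paper: a duality argument that tests $y_u$ against the solution of the backward (adjoint) equation, uses the $\alpha$-independent $C(\bar Q)$ bound available because $s'_n>1+\frac n2$, and handles $p_u$ symmetrically. The only cosmetic difference is that the paper plugs in the single extremal test function $\vert y_u\vert^{s_n-1}\mathrm{sign}(y_u)$ instead of taking the supremum over the unit ball of $L^{s'_n}(Q)$; your explicit remark that the maximum-principle reduction works for every exponent above $1+\frac n2$ (not just those covered by \eqref{Ern}) is a welcome clarification of a point the paper leaves implicit.
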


\begin{proof}
First we observe that by Theorem \ref{mainex}, $y_u \in C(\bar Q)\cap W(0,T)$ and as a consequence, 
$\vert  y_u\vert  ^{s_n - 1}\text{sign}(y_u) \in L^{s'_n}(Q)$. Moreover, $s_n < \frac{n+2}{n}$ implies that 
$s'_n >1+ \frac{n}{2}$. By change of variables, see for instance \cite[Lemma 3.17]{Troltzsch2010}, 
a solution of equation \eqref{adjlin}
transforms into a solutions of \eqref{lin.1}. Thus according to Theorem \ref{mainex}, the solution $q$ of
	\begin{align*}
		\left\{\begin{array}{l}
		-\frac{\partial q}{\partial t}+\mathcal A^*q+\alpha q= \vert  y_u\vert  ^{s_n - 1}\text{sign}(y_u)\  \text{ in }\ Q,
		\\  q=0 \text{ on } \Sigma,\ q(\cdot,T)=0\ \text{ on } \Omega.
	\end{array} \right.
	\end{align*}
 belongs to $W(0,T)\cap C(\bar Q)$ and satisfies
	 \[
		 \|   q\|   _{C(\bar Q)} \le C_{s'_n}\|   \vert  y_u\vert  ^{s_n - 1}\text{sign}(y_u)\|   _{L^{s'_n}(Q)} = 
             C_{s'_n}\|   y_u\|   ^{s_n - 1}_{L^{s_n}(Q)},
	 \]
 where $C_{s'_n}$ is independent of $a$ and $v$. Using these facts we derive the equalities
\begin{align*}
	\|   y_u\|   ^{s_n}_{L^{s_n}(Q)} &= \int_{Q}\vert  y_u\vert  ^{s_n}\dx=\big\langle -\frac{\partial q}{\partial t}+ 
    \mathcal A^*q+\alpha q,y_u \big\rangle = \big\langle \frac{\partial  y_u}{\partial t} +\mathcal Ay_u+\alpha y_u, q \big\rangle\\
	& = \int_{Q}uq\dx \le \|u\|   _{L^1(Q)}\|   q\|   _{C(\bar Q)} \le C_{s'_n}\|u\|   _{L^1(Q)}\|   y_u\|   ^{s_n - 1}_{L^{s_n}(Q)}.
\end{align*}
This proves \eqref{aeqestLs} for $y_u$. 
To obtain \eqref{aeqestLs} for $p_u$, one tests \eqref{adjlin} with a weak solution of
	\begin{align*}
		\left\{\begin{array}{l}
		\frac{\partial y}{\partial t}+\mathcal Ay+\alpha y= \vert  q_u\vert  ^{s_n - 1}\text{sign}(q_u)\  \text{ in }\ Q,
		\\  y=0 \text{ on } \Sigma,\ y(\cdot,0)=0\ \text{ on } \Omega,
		\end{array} \right.
	\end{align*}
and argues in an analogous way.
\end{proof}

Below we remind several results for the semilinear equation \eqref{see1}, which will be used further.
A proof of the next theorem can be found in \cite[Theorem 2.1]{CM2020} or \cite[Theorem 2.1]{Troltzsch2010}.

\begin{Theorem}
For any $u\in L^2(Q)$ the semilinear parabolic initial-boundary value problem \eqref{see1} has a unique weak solution 
$y_u\in W(0,T)$. If $u\in L^r(Q)$ (see \reff{Ern}) then $y_u\in W(0,T)\cap L^\infty(Q)$. 
If additionally $y_0\in C(\bar \Omega)$, 
then $y_u\in C(\bar Q)$. Moreover, there exists a constant $D_{r}>0$, independent of $u,f,y_0$ such that 
\begin{equation}
	\|y_u \|    _{W(0,T)}+\|y_u \|   _{L^\infty(Q)} \leq D_{r} \big(\| u \|   _{L^r(Q)}+\|   f(\cdot,\cdot,0) \| _{L^r(Q)}
          +\|    y_0\|   _{L^\infty(\Omega \big)}).	
	\label{semilin}
\end{equation}
Finally, if $u_k\rightharpoonup u$ weakly in $L^{r}(Q)$, then 
\begin{equation}
	\|y_{u_k}-y_u\|_{L^{\infty}(Q)}+\|y_{u_k}-y_u\|_{L^2(0,T;H^1_0(\Omega))}\to 0.
\label{semilinweak}
\end{equation}
\label{estsemeq}
\end{Theorem}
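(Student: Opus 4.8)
The plan is to establish the four assertions of Theorem~\ref{estsemeq} in the natural order, building on the linear theory of Theorem~\ref{mainex}. \emph{First} I would prove existence, uniqueness, and the regularity $y_u\in W(0,T)$ for $u\in L^2(Q)$. The standard route is a monotonicity/fixed-point argument: since $\partial f/\partial y\geq 0$ by Assumption~\ref{exist.2}, the nonlinearity $y\mapsto f(\cdot,\cdot,y)$ is monotone, so the operator $y\mapsto \partial_t y+\mathcal A y+f(\cdot,y)$ is a coercive, monotone, hemicontinuous operator from a suitable Bochner space into its dual; existence and uniqueness then follow from the classical theory for monotone parabolic equations (as in the cited \cite{CM2020,Troltzsch2010}). \emph{Second}, to upgrade regularity when $u\in L^r(Q)$, I would freeze the nonlinear term by writing $f(x,t,y_u)=\alpha(x,t)\,y_u + f(x,t,0)$ with
\[
  \alpha(x,t):=\int_0^1 \frac{\partial f}{\partial y}(x,t,\tau y_u(x,t))\,\mathrm d\tau \geq 0,
\]
which is nonnegative and, by the local boundedness in Assumption~\ref{exist.2} together with the already-known $L^2$-bound on $y_u$, lies in $L^\infty(Q)$. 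This reduces the equation to the \emph{linear} problem \eqref{lin.1} with right-hand side $u-f(\cdot,\cdot,0)\in L^r(Q)$, so Theorem~\ref{mainex}(2) yields $y_u\in W(0,T)\cap C(\bar Q)\subset L^\infty(Q)$ and, crucially, the bound \eqref{clr} with a constant $C_r$ \emph{independent of} $\alpha$, hence independent of $y_u$ itself.

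\emph{Third}, for the quantitative estimate \eqref{semilin} I would exploit precisely this $\alpha$-independence. Applying \eqref{clr} to the frozen linear equation gives
\[
  \|y_u\|_{L^2(0,T;H^1_0(\Omega))}+\|y_u\|_{C(\bar Q)}
     \leq C_r\big(\|u\|_{L^r(Q)}+\|f(\cdot,\cdot,0)\|_{L^r(Q)}\big),
\]
and a parallel energy estimate controls $\|\partial_t y_u\|_{L^2(0,T;H^{-1}(\Omega))}$ so as to recover the full $W(0,T)$-norm; incorporating the inhomogeneous initial datum $y_0\in L^\infty(\Omega)$ through the maximum-principle comparison contributes the $\|y_0\|_{L^\infty(\Omega)}$ term. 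The continuity $y_u\in C(\bar Q)$ when $y_0\in C(\bar\Omega)$ follows from the parabolic regularity already invoked for the linear problem. The only delicate point at this stage is the \emph{uniformity} of $D_r$ in $f$ and $y_0$: it is essential that the freezing produces a nonnegative $\alpha$, so that Theorem~\ref{mainex} applies with a universal constant and no circular dependence on the size of $y_u$ creeps in.

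\emph{Finally}, for the weak-continuity statement \eqref{semilinweak}, suppose $u_k\rightharpoonup u$ in $L^r(Q)$. Weak convergence gives boundedness of $\{u_k\}$ in $L^r(Q)$, hence by \eqref{semilin} the states $\{y_{u_k}\}$ are bounded in $W(0,T)\cap L^\infty(Q)$. I would extract a subsequence converging weakly in $W(0,T)$ and, by the compact embedding $W(0,T)\hookrightarrow L^2(Q)$ noted in the preliminaries, strongly in $L^2(Q)$ and a.e.\ in $Q$. Passing to the limit in the weak formulation of \eqref{see1}---using the a.e.\ convergence and continuity of $f$ in $y$, together with the uniform $L^\infty$ bound to dominate $f(\cdot,y_{u_k})$ and justify the limit via Lebesgue's theorem---identifies the limit as $y_u$; uniqueness then promotes the convergence to the full sequence. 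The claimed norm convergence in $L^\infty(Q)$ and in $L^2(0,T;H^1_0(\Omega))$ is then obtained by testing the equation for the difference $y_{u_k}-y_u$ (which solves a linear equation with right-hand side $u_k-u\rightharpoonup 0$ and a nonnegative zeroth-order coefficient coming from the mean-value linearization of $f$) and applying the $\alpha$-uniform estimates once more. \textbf{The main obstacle} I anticipate is this last passage to the limit in the nonlinear term: one must carefully combine the compactness from $W(0,T)\hookrightarrow L^2(Q)$ with the uniform $L^\infty$ bounds to upgrade weak convergence of the controls into \emph{strong} convergence of the states in the two stated norms, rather than merely weak convergence.
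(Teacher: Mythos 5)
The paper does not prove Theorem \ref{estsemeq} itself but cites \cite[Theorem 2.1]{CM2020} and \cite[Theorem 2.1]{Troltzsch2010}; your outline follows the same standard route (monotonicity for existence, freezing the nonlinearity to reduce to \eqref{lin.1}, $\alpha$-uniform linear estimates). However, two of your key steps, as justified, do not close.

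First, the boundedness step is circular. You set $\alpha(x,t)=\int_0^1 f_y(x,t,\tau y_u)\,\mathrm d\tau$ and claim $\alpha\in L^\infty(Q)$ ``by the local boundedness in Assumption~\ref{exist.2} together with the already-known $L^2$-bound on $y_u$.'' Local boundedness of $f_y$ means bounded on bounded sets of $y$, uniformly in $(x,t)$; an $L^2$ bound on $y_u$ gives no pointwise bound, so it does not put $\alpha$ in $L^\infty(Q)$. To invoke Theorem \ref{mainex}(2) you need $\alpha\in L^\infty(Q)$, which needs $y_u\in L^\infty(Q)$ --- exactly what you are trying to prove. The standard repair is a truncation: replace $f$ by a globally bounded monotone truncation $f_M$, solve, obtain an $L^\infty$ bound on the truncated solution \emph{independent of $M$} by comparison with the linear solution driven by $|u|+|f(\cdot,\cdot,0)|$ and $|y_0|$ (this is where $\frac{\partial f}{\partial y}\ge 0$ and the weak maximum principle enter), and then observe that for large $M$ the truncation is inactive. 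Only after this qualitative $L^\infty$ bound is available does your freezing argument legitimately deliver \eqref{semilin} with a constant independent of $f$ and $y_0$.

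Second, the final strong convergences in \eqref{semilinweak} do not follow from ``testing the equation for the difference and applying the $\alpha$-uniform estimates once more.'' The difference $y_{u_k}-y_u$ solves a linear equation with right-hand side involving $u_k-u$, and the $\alpha$-uniform estimate \eqref{clr} bounds $\|y_{u_k}-y_u\|_{C(\bar Q)}$ by $C_r\|u_k-u\|_{L^r(Q)}$; since $u_k-u$ converges only \emph{weakly} to $0$ in $L^r(Q)$, this bound does not tend to zero. You correctly identify this as the main obstacle, but the tool you propose cannot overcome it. What is actually needed is compactness of the linear solution operator $u\mapsto y_{0,u}$ from $L^r(Q)$ (weak) into $C(\bar Q)$ (strong) --- obtained, e.g., from H\"older regularity of parabolic solutions plus Arzel\`a--Ascoli, or from the Aubin--Lions lemma for the $L^2(0,T;H^1_0(\Omega))$ part combined with interpolation against the uniform $L^\infty$ bound. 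With that compactness in hand, your extraction-of-subsequence and identification-of-the-limit argument does yield \eqref{semilinweak}. The remaining parts of your outline (monotone-operator existence in $W(0,T)$, handling of the inhomogeneous initial datum by comparison, continuity up to $\bar Q$ when $y_0\in C(\bar\Omega)$) are consistent with the cited proofs.
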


The differentiability of the control-to-state operator under the assumptions \ref{exist.1} and \ref{exist.2} is well known, 
see among others \cite[Theorem 2.4]{CMR}.

\begin{Theorem}
The control-to-state operator $\mathcal G: L^r(Q)\to W(0,T)\cap L^{\infty}(Q)$, defined as $\mathcal G(v):=y_v$, 
is of class $C^2$ and for every $u,v,w\in L^r(Q)$, it holds that 
$z_{u,v}:=\mathcal \mathcal \mathcal G'(u)v$ is the solution of 
\begin{align}
	\left\{\begin{array}{l}
		\frac{d z}{dt}+\mathcal Az+f_y(x,t,y_u) z =v\  \text{ in }\ Q,
		\\ z=0 \ \text{ on }\ \Sigma,\ z(\cdot,0)=0\ \text{ on } \Omega
	\end{array} \right.
	\label{stddt}
\end{align}
and $\omega_{u,(v,w)}:=\mathcal \mathcal \mathcal G''(u)(v,w)$ is the solution of
\begin{align}
	\left\{\begin{array}{l}
	\frac{d z}{dt}+\mathcal Az+f_y(x,t,y_u) z = -f_{yy}(x,t,y_u) z_{u,v } z_{u,w} \  \text{ in }\ Q,
	\\ z=0 \ \text{ on }\ \Sigma,\ z(\cdot,0)=0\ \text{ on } \Omega.
	\end{array} \right.
	\label{stdddt}
\end{align}
\label{derivative}
\end{Theorem}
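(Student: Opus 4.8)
The plan is to deduce the result from the implicit function theorem in Banach spaces, applied to a fixed-point reformulation of \eqref{see1} that circumvents the regularity mismatch between the parabolic operator $\partial_t+\mathcal A$ and the right-hand side. Write $Y:=W(0,T)\cap L^\infty(Q)$, and let $\Phi:L^\infty(Q)\to L^r(Q)$, $\Phi(y):=f(\cdot,\cdot,y(\cdot,\cdot))$, be the Nemytskii operator associated with $f$. Let $\mathcal L:L^r(Q)\to Y$ denote the (affine) solution operator of the linear problem $\partial_t y+\mathcal Ay=g$, $y|_\Sigma=0$, $y(\cdot,0)=y_0$, and let $\mathcal L_0$ be its linear part corresponding to zero initial data; both are well defined and continuous by Theorem~\ref{mainex} and Theorem~\ref{estsemeq}. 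Then $y_u$ solves \eqref{see1} if and only if $\mathcal F(y_u,u)=0$, where
\[
\mathcal F:Y\times L^r(Q)\to Y,\qquad \mathcal F(y,u):=y-\mathcal L\big(u-\Phi(y)\big).
\]

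First I would verify that $\Phi$ is of class $C^2$ from $L^\infty(Q)$ into $L^r(Q)$, with $\Phi'(y)h=f_y(\cdot,\cdot,y)h$ and $\Phi''(y)(h_1,h_2)=f_{yy}(\cdot,\cdot,y)h_1h_2$. This is where Assumption~\ref{exist.2} enters through the local boundedness and local Lipschitz continuity of $f_y$ and $f_{yy}$, uniform in $(x,t)$: a first-order Taylor estimate shows that the first remainder is pointwise $O(\|h\|_{L^\infty}^2)$, hence $o(\|h\|_{L^\infty})$ in $L^r$, and the Lipschitz continuity of $f_{yy}$ handles the second-order remainder analogously. Since $Y\hookrightarrow L^\infty(Q)$ continuously and $\mathcal L$ is affine and continuous, $\mathcal F$ is $C^2$, with $\mathcal F_u(y,u)v=-\mathcal L_0v$, $\mathcal F_y(y,u)h=h+\mathcal L_0(f_y(\cdot,\cdot,y)h)$, $\mathcal F_{yy}(y,u)(h_1,h_2)=\mathcal L_0(f_{yy}(\cdot,\cdot,y)h_1h_2)$, and $\mathcal F_{yu}=\mathcal F_{uu}=0$.

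The crucial step is the invertibility of $\mathcal F_y(y_u,u)$ on $Y$. Given $\varrho\in Y$, the equation $h+\mathcal L_0(f_y(\cdot,\cdot,y_u)h)=\varrho$ is equivalent, via $w:=\mathcal L_0(f_y(\cdot,\cdot,y_u)h)$ and $h=\varrho-w$, to the linear parabolic problem $\partial_t w+\mathcal Aw+f_y(\cdot,\cdot,y_u)w=f_y(\cdot,\cdot,y_u)\varrho$ with $w(\cdot,0)=0$. Because $y_u\in L^\infty(Q)$ by Theorem~\ref{estsemeq} and $f_y$ is locally bounded uniformly in $(x,t)$, the coefficient $\alpha:=f_y(\cdot,\cdot,y_u)$ lies in $L^\infty(Q)$ and is nonnegative by Assumption~\ref{exist.2}; hence Theorem~\ref{mainex} yields a unique $w\in Y$ depending continuously on the data. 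Thus $h=\varrho-w$ is the unique preimage and $\mathcal F_y(y_u,u)$ is a continuous bijection, so an isomorphism by the open mapping theorem. The implicit function theorem then provides, near each $\bar u$, a $C^2$ map $u\mapsto y_u$; since $\bar u$ is arbitrary and $y_u$ is globally well defined, $\mathcal G$ is $C^2$ on all of $L^r(Q)$.

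Finally I would identify the derivatives by implicit differentiation of $\mathcal F(\mathcal G(u),u)=0$. The first-order relation $\mathcal F_y\,\mathcal G'(u)v+\mathcal F_uv=0$ reads $z+\mathcal L_0(f_y(\cdot,\cdot,y_u)z)=\mathcal L_0v$ for $z:=\mathcal G'(u)v$, which is precisely the weak form of \eqref{stddt}. Differentiating once more and using $\mathcal F_{yu}=\mathcal F_{uu}=0$ gives $\mathcal F_y\,\mathcal G''(u)(v,w)+\mathcal F_{yy}(\mathcal G'(u)v,\mathcal G'(u)w)=0$, that is, $\omega+\mathcal L_0(f_y(\cdot,\cdot,y_u)\omega)=-\mathcal L_0\big(f_{yy}(\cdot,\cdot,y_u)z_{u,v}z_{u,w}\big)$ for $\omega:=\mathcal G''(u)(v,w)$, which is the weak form of \eqref{stdddt}. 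I expect the only genuinely delicate point to be the $C^2$-differentiability of $\Phi$ between $L^\infty$ and $L^r$ — in particular controlling the second-order remainder through the local Lipschitz continuity of $f_{yy}$ — whereas the well-posedness mismatch is bypassed by composing with $\mathcal L$, and the invertibility of $\mathcal F_y$ reduces directly to the sign condition $f_y\ge0$ together with Theorem~\ref{mainex}.
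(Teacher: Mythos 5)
Your proposal is correct: the paper itself gives no proof of Theorem \ref{derivative} but cites \cite[Theorem 2.4]{CMR}, and the argument there is exactly your scheme — fixed-point reformulation through the linear solution operator, $C^2$ differentiability of the Nemytskii operator from $L^\infty(Q)$ into $L^r(Q)$ via the uniform local Lipschitz continuity of $f_y$ and $f_{yy}$ in Assumption \ref{exist.2}, invertibility of the linearization reduced to well-posedness of the linearized parabolic equation with coefficient $f_y(\cdot,\cdot,y_u)\ge 0$, and identification of the derivatives by implicit differentiation. The only point worth stating explicitly is that the local implicit function produced near each $\bar u$ coincides with $\mathcal G$ because the weak solution of \eqref{see1} is \emph{unique} (Theorem \ref{estsemeq}); this is what upgrades the local conclusions of the implicit function theorem to $C^2$ regularity of $\mathcal G$ on all of $L^r(Q)$.
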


In the case $v=w$, we will just write $\omega_{u,v}$ instead of $\omega_{u,(v,v)}$.

\begin{Remark}  \label{rebound}
By the boundedness of $\mathcal U$ in $L^\infty(Q)$ and by Theorem \ref{estsemeq}, there exists a constant 
$M_\mathcal U > 0$ such that
	\begin{equation}
	\max\{ \| u\|_{L^\infty(Q)},\|y_u\| _{C(\bar{Q})}\}\leq M_{\mathcal U} \quad \forall u\in \mathcal U.
	\label{unibound}
	\end{equation}
	\end{Remark}

\subsection{Estimates associated with differentiability}

We employ results of the last subsection to derive estimates for the state equation \eqref{see1} and its linearisation 
\eqref{stddt}. 
These estimates constitute a key ingredient to derive stability results in the later sections. 
The next lemma extends \cite[Lemma 2.7]{CDJ2022} from elliptic equations to parabolic ones. 

\begin{Lemma} \label{sprep}
The following statements are fulfilled.
\begin{itemize}
\item[(i)] There exists a positive constant $M_2$ such that  for every $ u,\bar u \in \Uad \text{ and }v\in L^r(Q)$
\begin{align}
  \|   z_{u,v} - z_{\bar u,v}\|   _{L^2(Q)}\le M_2\|   y_u - y_{\bar u}\|   _{C(\bar Q)}\|   z_{\bar u,v}\|   _{L^2(Q)}.\label{E33}
\end{align}
\item[(ii)] Let $X=C(\bar Q)$ or $X=L^2(Q)$. Then there exists $\varepsilon > 0$ such that for every 
$u, \bar u \in \Uad$ with $\|   y_u - y_{\bar u}\|_{C(\bar Q)} < \varepsilon$ the following inequalities are satisfied
\begin{align}
	&\|   y_u - y_{\bar u}\|_X \le 2\|   z_{\bar u,u - \bar u}\|   _X \le 3\|   y_u - y_{\bar u}\|_X,
	\label{E2.14.2}\\
	&\|   z_{\bar u,v}\|   _X \le 2 \|   z_{u,v}\|   _X \le 3\|   z_{\bar u,v}\|   _X.\label{E2.15.2}
\end{align}
\end{itemize}
\end{Lemma}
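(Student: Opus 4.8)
The plan is to write every difference of solutions appearing in the statement as the solution of a linear parabolic equation of the type \eqref{lin.1}, with a nonnegative zero-order coefficient of the form $f_y(\cdot,\cdot,y_w)$, and then to exploit the two crucial features of Theorem \ref{mainex}: the a priori bounds \eqref{wl2} and \eqref{clr}, \emph{with constants independent of the coefficient $\alpha$}. Throughout, I would use that all states $y_u$, $y_{\bar u}$ (and the intermediate states below) are bounded in $C(\bar Q)$ by the constant $M_{\mathcal U}$ of Remark \ref{rebound}; since, by Assumption \ref{exist.2}, $f_{y}$ is locally Lipschitz in $y$ uniformly in $(x,t)$, there is a single Lipschitz constant $L$ valid on the interval $[-M_{\mathcal U},M_{\mathcal U}]$, which renders every constant below uniform in $u,\bar u\in\mathcal U$.

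For (i), set $w:=z_{u,v}-z_{\bar u,v}$. Subtracting the two copies of \eqref{stddt} and adding and subtracting $f_y(\cdot,\cdot,y_u)z_{\bar u,v}$, I would check that $w$ solves
\[
\frac{\partial w}{\partial t}+\mathcal A w+f_y(\cdot,\cdot,y_u)\,w=-\big(f_y(\cdot,\cdot,y_u)-f_y(\cdot,\cdot,y_{\bar u})\big)\,z_{\bar u,v},
\]
with $w(\cdot,0)=0$. The coefficient $f_y(\cdot,\cdot,y_u)\ge 0$ by Assumption \ref{exist.2}, so \eqref{wl2} applies with $\alpha=f_y(\cdot,\cdot,y_u)$. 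Bounding the right-hand side pointwise by $L\,\|y_u-y_{\bar u}\|_{C(\bar Q)}\,|z_{\bar u,v}|$ and using the embedding $L^2(0,T;H^1_0(\Omega))\hookrightarrow L^2(Q)$ gives \eqref{E33} with $M_2$ a multiple of $\hat C L$.

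For (ii), the second chain \eqref{E2.15.2} follows from (i): with $w$ as above, \eqref{E33} yields $\|w\|_{L^2(Q)}\le M_2\|y_u-y_{\bar u}\|_{C(\bar Q)}\|z_{\bar u,v}\|_{L^2(Q)}$, while repeating the same computation but estimating the right-hand side in $L^r(Q)$ and invoking \eqref{clr} gives the analogous bound $\|w\|_{C(\bar Q)}\le C\|y_u-y_{\bar u}\|_{C(\bar Q)}\|z_{\bar u,v}\|_{C(\bar Q)}$ (using $z_{\bar u,v}\in C(\bar Q)$ and $\|z_{\bar u,v}\|_{L^r(Q)}\le|Q|^{1/r}\|z_{\bar u,v}\|_{C(\bar Q)}$). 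Thus in either space $X$, $\|w\|_X\le C\|y_u-y_{\bar u}\|_{C(\bar Q)}\|z_{\bar u,v}\|_X$. For the first chain \eqref{E2.14.2} I would introduce the linearization error $\eta:=(y_u-y_{\bar u})-z_{\bar u,u-\bar u}$. Writing $f(\cdot,\cdot,y_u)-f(\cdot,\cdot,y_{\bar u})=\tilde\alpha\,(y_u-y_{\bar u})$ with $\tilde\alpha:=\int_0^1 f_y(\cdot,\cdot,y_{\bar u}+s(y_u-y_{\bar u}))\,ds$, and subtracting the equations \eqref{see1} for $y_u,y_{\bar u}$ from the equation \eqref{stddt} for $z_{\bar u,u-\bar u}$, I would obtain that $\eta$ solves the same type of equation, namely
\[
\frac{\partial \eta}{\partial t}+\mathcal A\eta+f_y(\cdot,\cdot,y_{\bar u})\,\eta=-\big(\tilde\alpha-f_y(\cdot,\cdot,y_{\bar u})\big)\,(y_u-y_{\bar u}),
\]
with $\eta(\cdot,0)=0$. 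Since $|\tilde\alpha-f_y(\cdot,\cdot,y_{\bar u})|\le \tfrac{L}{2}|y_u-y_{\bar u}|$, the right-hand side is bounded pointwise by $\tfrac{L}{2}\|y_u-y_{\bar u}\|_{C(\bar Q)}\,|y_u-y_{\bar u}|$, and \eqref{wl2}/\eqref{clr} again give $\|\eta\|_X\le C\|y_u-y_{\bar u}\|_{C(\bar Q)}\|y_u-y_{\bar u}\|_X$ for $X\in\{L^2(Q),C(\bar Q)\}$.

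Both chains then close by the same elementary device. Choosing $\varepsilon>0$ so small that $C\varepsilon\le\tfrac12$ in each of the estimates above, the condition $\|y_u-y_{\bar u}\|_{C(\bar Q)}<\varepsilon$ forces $\|w\|_X\le\tfrac12\|z_{\bar u,v}\|_X$ and $\|\eta\|_X\le\tfrac12\|y_u-y_{\bar u}\|_X$. Since $z_{u,v}=z_{\bar u,v}+w$ and $y_u-y_{\bar u}=z_{\bar u,u-\bar u}+\eta$, two applications of the triangle inequality turn each such bound into the desired two-sided comparison (e.g. $\|y_u-y_{\bar u}\|_X\le\|z_{\bar u,u-\bar u}\|_X+\tfrac12\|y_u-y_{\bar u}\|_X$ gives the left inequality in \eqref{E2.14.2}, while $\|z_{\bar u,u-\bar u}\|_X\le\|y_u-y_{\bar u}\|_X+\tfrac12\|y_u-y_{\bar u}\|_X$ gives the right one). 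The main obstacle I anticipate is bookkeeping rather than conceptual: one must ensure that every constant stays independent of $u,\bar u$ (this is where the coefficient-independence in Theorem \ref{mainex} and the uniform bound $M_{\mathcal U}$ are essential) and that a single $\varepsilon$ works simultaneously for $X=L^2(Q)$ and $X=C(\bar Q)$ across all four estimates.
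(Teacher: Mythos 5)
Your proposal is correct and follows essentially the same route as the paper: each difference ($z_{u,v}-z_{\bar u,v}$ and the linearization error $y_u-y_{\bar u}-z_{\bar u,u-\bar u}$) is written as the solution of a linear parabolic equation with nonnegative zero-order coefficient, estimated via the $\alpha$-independent bounds of Theorem \ref{mainex}, and the two-sided inequalities follow by absorption for small $\|y_u-y_{\bar u}\|_{C(\bar Q)}$. The only (harmless) deviation is that you bound the remainder in $L^2(Q)$ directly from \eqref{wl2} using the Lipschitz continuity of $f_y$, whereas the paper routes this through the $L^s$ estimates of Lemma \ref{estLs} and the interpolation inequality \eqref{E2.13L2} with $s=\sqrt{2}$.
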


The proof, that is a consequence of Lemma \ref{spreprep}, is given in Appendix A.


\section{The control problem} \label{SProblem}
The optimal control problem \eqref{ocp1}-\eqref{const} is well posed under assumptions \ref{exist.1} and \ref{exist.2}. 
Using the direct method of calculus of variations one can easily prove that there exists at least one global minimizer, 
see \cite[Theorem 5.7]{Troltzsch2010}. On the other hand, the semilinear state equation makes the optimal 
control problem nonconvex, therefore we allow global minimizers as well as local ones. In the literature, weak and 
strong local minimizers are considered. 

\begin{Definition}
We say that $\bar u \in \mathcal U$ is an $L^r (Q)$-weak local minimum of problem  \eqref{ocp1}-\eqref{const}, if there exists 
some $\varepsilon>0$ such that
\[
J( \bar u) \le J(u) \  \ \ \forall u \in \mathcal U \text{ with } \|   u-\bar u\|   _{L^r(Q)}\le \varepsilon.
\]
We say that $\bar u \in \mathcal U$ a strong local minimum of \Pb if there exists $\varepsilon >0$ such that
\[
J( \bar u) \le J(u) \ \ \ \forall u \in \mathcal U \text{ with } \|    y_u-y_{\bar u}\|   _{L^{\infty}(Q)}\le \varepsilon.
\]
We say that $\bar u \in \mathcal U$ is a strict (weak or strong) local minimum if the above inequalities are
strict for $u\ne \bar u $.
\end{Definition}

Relations between these types of optimality are obtained in \cite[Lemma 2.8]{Casas-Mateos2020}.


As a consequence of Theorem \ref{derivative} and the chain rule, we obtain the differentiability of the objective functional
with respect to the control.
 
\begin{Theorem} \label{T3.1}
The functional $J:L^r(Q) \longrightarrow \mathbb{R}$ is of class $C^2$. Moreover, 
given $u, v, v_1, v_2 \in L^r(Q)$ we have
\begin{align}
     J'(u)v& =\int_Q\Big(\frac{dL_0}{dy}(x,t,y_u)+mu\Big)z_{u,v}+(my_u+g)v\dx\dt\\
      &= \int_Q(p_u+my_u+g)v\dx\dt,\label{E3.4}\\
   J''(u)(v_1,v_2)& = \int_Q\Big[\frac{\partial^2L}{\partial y^2}(x,t,y_u,u) - p_u\frac{\partial^2f}{\partial y^2}(x,t,y_u)\Big]
       z_{u,v_1}z_{u,v_2}\dx\dt\label{E3.5.a}\\
        &+ \int_Qm(z_{u,v_1}v_2+z_{u,v_2}v_1)\dx\dt,\label{E3.5}
\end{align}
Here, $p_u \in W(0,T) \cap C(\bar Q)$ is the unique solution of the adjoint equation
\begin{equation}
      \left\{\begin{array}{l}\displaystyle -\frac{d p}{dt}+\mathcal{A}^*p + \frac{\partial f}{\partial y}(x,t,y_u)p=  
    \frac{\partial L}{\partial y}(x,t,y_u,u) \text{ in } Q,\\ p = 0\text{ on } \Sigma, \ p(\cdot,T)=0\text{ on } \Omega.\end{array}\right.
\label{E3.6}
\end{equation}
\end{Theorem}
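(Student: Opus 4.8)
The plan is to establish the three claims---that $J$ is of class $C^2$ and the formulas \eqref{E3.4}--\eqref{E3.5} for the first and second derivatives---by composing the chain rule with the known regularity of the control-to-state operator $\mathcal G$ (Theorem \ref{derivative}) and then introducing the adjoint state $p_u$ to rewrite the derivatives in their final form. First I would observe that $J(u) = \int_Q L(x,t,y_u,u)\dx\dt$ decomposes, via \eqref{genobj}, as the sum of a composite term $u \mapsto \int_Q L_0(x,t,y_u)\dx\dt$ and the explicitly control-dependent term $\int_Q (m y_u + g)u\dx\dt$. Since $\mathcal G$ is $C^2$ from $L^r(Q)$ into $W(0,T)\cap L^\infty(Q)$ by Theorem \ref{derivative}, and $L_0$ together with its first two $y$-derivatives are locally bounded and locally Lipschitz by Assumption \ref{exist.2}, the Nemytskii superposition operator induced by $L_0$ is $C^2$ on the relevant function spaces; composing with $\mathcal G$ and invoking the chain rule then yields that $J$ is $C^2$. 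The boundedness in $L^\infty(Q)$ from \eqref{semilin} is what makes the local boundedness hypotheses on the derivatives of $L_0$ and $f$ usable, since all states stay in a fixed bounded set.

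Next I would compute the first derivative directly from the chain rule. Differentiating the composite term gives $\int_Q \frac{\partial L_0}{\partial y}(x,t,y_u) z_{u,v}\dx\dt$, where $z_{u,v} = \mathcal G'(u)v$ solves \eqref{stddt}, and differentiating the affine-in-$u$ term gives $\int_Q m z_{u,v} u + (m y_u + g)v \dx\dt$; collecting these produces the first line of \eqref{E3.4}. To pass to the second line I would introduce the adjoint state $p_u$ solving \eqref{E3.6} and test that equation against $z_{u,v}$ while testing the linearized equation \eqref{stddt} against $p_u$. Integrating by parts in $t$ and using the homogeneous boundary and terminal/initial conditions (so the boundary terms cancel and the $\mathcal A$ and $\mathcal A^*$ terms match), the two duality pairings collapse to the identity $\int_Q \frac{\partial L}{\partial y}(x,t,y_u,u) z_{u,v}\dx\dt = \int_Q p_u v\dx\dt$. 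Since $\frac{\partial L}{\partial y} = \frac{\partial L_0}{\partial y} + mu$, this converts the first-order term into $\int_Q p_u v\dx\dt$, and adding the remaining $\int_Q (m y_u + g)v$ gives exactly \eqref{E3.4}.

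For the second derivative I would differentiate the first-line expression of $J'(u)v_1$ in the direction $v_2$. This produces three kinds of contributions: a term where $\frac{\partial^2 L}{\partial y^2}(x,t,y_u,u)$ is differentiated through $y_u$, giving $\int_Q \frac{\partial^2 L}{\partial y^2} z_{u,v_1} z_{u,v_2}$; a term from differentiating $z_{u,v_1}$ itself, which brings in the second-order state $\omega_{u,(v_1,v_2)} = \mathcal G''(u)(v_1,v_2)$ solving \eqref{stdddt}; and the symmetric cross terms $\int_Q m(z_{u,v_1} v_2 + z_{u,v_2} v_1)$ coming from the bilinear control dependence. The term involving $\omega_{u,(v_1,v_2)}$ is handled by the same adjoint trick: testing \eqref{stdddt} against $p_u$ and \eqref{E3.6} against $\omega$, the duality reduces $\int_Q \frac{\partial L}{\partial y} \omega_{u,(v_1,v_2)}$ to $-\int_Q p_u \frac{\partial^2 f}{\partial y^2}(x,t,y_u) z_{u,v_1} z_{u,v_2}$, because the right-hand side of \eqref{stdddt} carries precisely the factor $-f_{yy}(x,t,y_u) z_{u,v_1} z_{u,v_2}$. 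Combining this with the direct second-derivative term yields the bracketed coefficient $\frac{\partial^2 L}{\partial y^2} - p_u \frac{\partial^2 f}{\partial y^2}$ in \eqref{E3.5.a}, and the cross terms give \eqref{E3.5}.

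The main obstacle I anticipate is not the formal chain-rule bookkeeping but the justification of the adjoint manipulations at the available level of regularity: one must verify that $p_u \in W(0,T)\cap C(\bar Q)$ (so that the products $p_u f_{yy}(x,t,y_u) z_{u,v_1} z_{u,v_2}$ are integrable and the pairings make sense) and that the integration by parts in time is legitimate for $W(0,T)$ functions with the stated boundary conditions. The existence, uniqueness, and regularity of $p_u$ follow from Theorem \ref{mainex} applied to the adjoint equation \eqref{E3.6} after the time-reversal change of variables, once one checks that its right-hand side $\frac{\partial L}{\partial y}(x,t,y_u,u)$ lies in $L^r(Q)$---which it does because $y_u \in L^\infty(Q)$ keeps the argument in a bounded set where $\frac{\partial L_0}{\partial y}$ is bounded, and $mu \in L^\infty(Q) \subset L^r(Q)$. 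I would therefore spend the bulk of the care on confirming these integrability and regularity points and on the cancellation of boundary terms in the two integration-by-parts identities, treating the rest as routine.
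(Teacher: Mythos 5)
Your proposal is correct and follows exactly the route the paper intends: the paper gives no written proof of Theorem \ref{T3.1}, stating it as a direct consequence of Theorem \ref{derivative} and the chain rule, and your elaboration (chain rule through the $C^2$ control-to-state operator, then the standard adjoint/duality integration by parts to pass from the first line to \eqref{E3.4} and to eliminate $\omega_{u,(v_1,v_2)}$ in \eqref{E3.5.a}) is the standard argument that justifies it. The regularity points you flag — $p_u\in W(0,T)\cap C(\bar Q)$ via Theorem \ref{mainex} and the $L^\infty$ bound on $y_u$ making the local hypotheses on $L_0$ and $f$ usable — are indeed the only places requiring care, and you handle them correctly.
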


We introduce the Hamiltonian 
$Q\times\mathbb R\times\mathbb R\times \mathbb R \ni (x,t,y,p,u) \mapsto H(x,t,y,p,u) \in \mathbb R$
in the usual way:
\begin{align*}
       H(x,t,y,p,u):=L(x,t,y,u)+p(u-f(x,t,y)).
\end{align*}
The local form of the Pontryagin type necessary optimality conditions for problem \eqref{ocp1}-\eqref{const} in the next theorem
is well known (see e.g. \cite{Casas-Mateos2020,CMR,Troltzsch2010}).
 
\begin{Theorem}
If $\bar u$ is a weak local minimizer for problem  \eqref{ocp1}-\eqref{const}, 
then there exist unique elements $\bar y, \bar p\in W(0,T)\cap C(\bar Q)$ such that 
\begin{align}
& \left\{\begin{array}{l} \frac{d \bar y}{dt}+\mathcal{A}\bar y + f(x,t,\bar y) = \bar u \text{ in } Q,\\ \bar y = 
0\text{ on } \Sigma, \ \bar y(\cdot,0)=y_0\text{ on } \Omega.\end{array}\right.
\label{E3.7}\\
&\left\{\begin{array}{l}\displaystyle \frac{d \bar p}{dt}+\mathcal{A}^*\bar p =
 \frac {\partial H}{\partial y}(x,t,\bar y,\bar p,\bar u) \text{ in } Q,\\ \bar p = 0\text{ on } \Sigma, \ \bar p(\cdot,T)=0\text{ on } \Omega.\end{array}\right.
\label{E3.8}\\
&\int_Q\frac {\partial H}{\partial u}(x,t,\bar y,\bar p,\bar u)(u - \bar u)\dx\dt \ge 0 \quad \forall u \in \Uad. \label{E3.9}
\end{align}
\label{pontryagin}
\end{Theorem}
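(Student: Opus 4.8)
The plan is to assemble the three conditions from standard ingredients already available in the excerpt, treating the state and adjoint as solutions of (semi)linear parabolic equations and deriving the variational inequality from the first-order condition for the $C^1$ functional $J$. First I would set $\bar y := y_{\bar u}$. Since $\bar u \in \mathcal U \subset L^\infty(Q) \subset L^r(Q)$, Theorem~\ref{estsemeq} guarantees that $\bar y$ is the unique weak solution of the state equation \eqref{E3.7} and that $\bar y \in W(0,T) \cap L^\infty(Q)$; the continuity $\bar y \in C(\bar Q)$ follows from the same theorem once the initial datum is continuous, and this regularity step is where I would be careful.

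Next I would construct $\bar p$. Using $H(x,t,y,p,u) = L(x,t,y,u) + p(u - f(x,t,y))$ one has $\frac{\partial H}{\partial y}(x,t,\bar y,\bar p,\bar u) = \frac{\partial L}{\partial y}(x,t,\bar y,\bar u) - \bar p\,\frac{\partial f}{\partial y}(x,t,\bar y)$, so the adjoint equation \eqref{E3.8} coincides with the linear equation \eqref{E3.6}. I would define $\bar p := p_{\bar u}$ as its solution. The key observation is that this is a \emph{linear} parabolic equation with zero terminal data, whose zero-order coefficient $\frac{\partial f}{\partial y}(x,t,\bar y) \geq 0$ (Assumption~\ref{exist.2}) and whose right-hand side $\frac{\partial L}{\partial y}(x,t,\bar y,\bar u)$ lies in $L^r(Q)$: indeed, $\bar y$ and $\bar u$ take values in a fixed compact set by \eqref{unibound}, the derivatives of $L_0$ are locally bounded uniformly in $(x,t)$ by Assumption~\ref{exist.2}, and the term $m\bar u$ is bounded. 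Reversing time $t \mapsto T - t$ turns \eqref{E3.6} into a forward equation of the form treated in Theorem~\ref{estsemeq}, which yields $\bar p \in W(0,T) \cap C(\bar Q)$; uniqueness is immediate from linearity.

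It remains to prove the variational inequality \eqref{E3.9}. Here I would exploit convexity of $\mathcal U$: for any $u \in \mathcal U$ and $\lambda \in (0,1]$ the control $\bar u + \lambda(u - \bar u)$ lies in $\mathcal U$ and, since $u - \bar u \in L^\infty(Q) \subset L^r(Q)$, it is admissible in the definition of weak local minimality for small $\lambda$. Because $J$ is of class $C^1$ (Theorem~\ref{T3.1}), dividing $J(\bar u + \lambda(u-\bar u)) - J(\bar u) \geq 0$ by $\lambda$ and letting $\lambda \to 0^+$ gives $J'(\bar u)(u - \bar u) \geq 0$. Finally I would insert the representation \eqref{E3.4}, namely $J'(\bar u)(u-\bar u) = \int_Q (p_{\bar u} + m y_{\bar u} + g)(u - \bar u)\dx\dt$, and note that $\frac{\partial H}{\partial u}(x,t,\bar y,\bar p,\bar u) = \frac{\partial L}{\partial u}(x,t,\bar y,\bar u) + \bar p = m\bar y + g + \bar p$, so the two integrands coincide and \eqref{E3.9} follows for every $u \in \mathcal U$.

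The routine parts are the passage to the limit in the difference quotient and the sign bookkeeping in $H$. The main obstacle I anticipate lies entirely in the regularity verification of the second step: one must confirm that the composed right-hand side $\frac{\partial L}{\partial y}(\cdot,\cdot,\bar y,\bar u)$ genuinely belongs to $L^r(Q)$ (so that Theorem~\ref{estsemeq} applies and delivers the claimed $C(\bar Q)$ regularity of $\bar p$) and to secure the continuity of $\bar y$ up to the boundary; both rest on the uniform bounds \eqref{unibound} together with the local boundedness of the derivatives in Assumption~\ref{exist.2}.
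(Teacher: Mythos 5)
The paper does not prove Theorem~\ref{pontryagin}; it states it as well known and refers to the literature (Casas--Mateos, Casas--Mateos--R\"osch, Tr\"oltzsch). Your argument is the standard one those references use and it is correct: $\bar y:=y_{\bar u}$ from Theorem~\ref{estsemeq}, $\bar p:=p_{\bar u}$ from the linear adjoint equation \eqref{E3.6} (whose right-hand side $\tfrac{\partial L_0}{\partial y}(\cdot,\cdot,\bar y)+m\bar u$ is in $L^\infty(Q)\subset L^r(Q)$ by \eqref{unibound} and Assumption~\ref{exist.2}), and the variational inequality from convexity of $\Uad$, the difference quotient, and the representation \eqref{E3.4} combined with $\tfrac{\partial H}{\partial u}=my+g+p$. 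Three small remarks. First, for the adjoint regularity it is more natural to invoke Theorem~\ref{mainex}(2) after the time reversal (the equation is linear with nonnegative coefficient $\alpha=\tfrac{\partial f}{\partial y}(\cdot,\bar y)$), exactly as the paper does in the proof of Lemma~\ref{estLs}; Theorem~\ref{estsemeq} also works since the linear case is a special instance, but citing it obscures that no semilinear structure is needed. Second, your caveat about $\bar y\in C(\bar Q)$ is well placed: the standing Assumption~\ref{exist.2} only gives $y_0\in L^\infty(\Omega)$, and Theorem~\ref{estsemeq} delivers continuity up to $t=0$ only when $y_0\in C(\bar\Omega)$, so the theorem as stated tacitly uses that extra hypothesis (for $\bar p$ no such issue arises because the terminal datum is zero). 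Third, when you identify \eqref{E3.8} with \eqref{E3.6} you are implicitly correcting a sign: as printed, \eqref{E3.8} has $+\tfrac{d\bar p}{dt}$ where the backward adjoint equation requires $-\tfrac{d\bar p}{dt}$; your reading is the intended one.
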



\subsection{Sufficient conditions for optimality and stability} \label{CS}

In this subsection we discuss the state of the art in the theory of  sufficient second order 
optimality conditions in PDE optimal control, as well as related stability results for the optimal solution.
For this purpose, we recall the definitions of several cones that are useful in the study of sufficient 
conditions. Given a triplet $(\bar y, \bar p, \bar u)$ satisfying the optimality system in Theorem~\ref{pontryagin}, 
and abbreviating $\frac {\partial \bar H}{\partial u}(x,t):=
\frac {\partial H}{\partial u}(x,t,\bar y,\bar p,\bar u)$, we have from \eqref{E3.9} that almost everywhere in $Q$
\[
     \bar u=u_a\; \text{ if }\;   \frac{\partial \bar H}{\partial u}> 0\  \ \  \text{ and } \ \ \bar u=
        u_b \; \text{ if } \;  \frac{\partial \bar H}{\partial u}< 0.
\]
This motivates to consider the following set
\begin{align}
\Big\{v\in L^2(Q)\Big \vert   v\geq 0\text{ a.e. on } [\bar u =u_a]\text{ and } v\leq 0 \text{ a.e. on } [\bar u =u_b]\Big\}.
\label{sign}
\end{align}
Sufficient second order conditions for (local) optimality based on \eqref{sign} are given in \cite{CMR,Casas-Mateos2020, CT16}.
Following the usual approach in mathematical programming, one can define the critical cone at $\bar u$ as follows:
\begin{equation*}
     C_{\bar u}:=\Big\{v\in L^2(Q)\Big \vert   v\text{ satisfies }\eqref{sign}\text{ and } v(x,t)=
      0\text{ if }\Big\vert  \frac{\partial \bar H}{\partial u}(x,t)\Big \vert   >0\Big\}.
\end{equation*}
Obviously, this cone is trivial if $\frac{\partial \bar H}{\partial u}(x,t) \not= 0$ for a.e. $(x,t)$ 
(which implies bang-bang structure of $\bar u$) thus no additional information 
can be gained based on $C_{\bar u}$.  
To address this issue, it was proposed in \cite{Dunn1998,MZ1979} to consider larger cones 
on which second order conditions can be posed. 
Namely, for $\tau>0$ one defines
\begin{align}
D^{\tau}_{\bar u}&:=
\Big\{v\in L^2(Q)\Big \vert   v\text{ satisfies }\eqref{sign}\text{ and } v(x,t)=
0\text{ if }\Big\vert  \frac{\partial \bar H}{\partial u}(x,t)\Big \vert   >\tau\Big\},\\
G^{\tau}_{\bar u}&:=
\Big\{v\in L^2(Q)\Big \vert v\text{ satisfies }\eqref{sign}\text{ and } J'(\bar u)(v)\leq \tau \|z_{\bar u,v} \|_{L^1(Q)}\Big\},\\
E^{\tau}_{\bar u}&:=
\Big\{v\in L^2(Q)\Big \vert   v\text{ satisfies }\eqref{sign}\text{ and } J'(\bar u)(v)\leq \tau \|z_{\bar u,v} \|_{L^2(Q)}\Big\},\\
C^\tau_{\bar u}&:=D^{\tau}_{\bar u}\cap G^{\tau}_{\bar u}.
\end{align}
The cones $D^\tau_{\bar u}$, $E^{\tau}_{\bar u} \text{ and } G^{\tau}_{\bar u}$ were introduced in \cite{Casas12,CT16}
as extensions of the usual critical cone. It was proven in \cite{Casas12,CRT15,CT16} that the condition:
\begin{equation}
    \exists \delta>0, \tau>0 \ \ \mbox{ such that } \ \ J''(\bar u)v^2\ge \delta \|z_{\bar u,v} \|_{L^2(Q)}^2 \ \  \forall v \in G
\label{soclin}
\end{equation}
is sufficient for weak (in the case $G=D^{\tau}_{\bar u}$) or strong (in the case 
$G=E^{\tau}_{\bar u}$) local optimality in the elliptic and parabolic setting. Most recently, 
the cone $C^\tau_{\bar u}$ was defined in \cite{Casas-Mateos2020} and also used in \cite{CM2021}.
It was proved in \cite{Casas-Mateos2020}, that \eqref{soclin} with $C=C^\tau_{\bar u}$ is sufficient for 
strong local optimality.\\
Under \eqref{soclin} it is possible to obtain some stability results. In \cite{CRT15} and \cite{CT16} the authors obtain Lipschitz 
stability in the ($L^2-L^\infty$)-sense for the states%
\footnote{ \label{Fn-} For $p,r\in [1,\infty]$, we speak of stability in the $L^p-L^r$-sense for the optimal states 
$\bar y$ with respect to perturbations (may appear in the equation or the objective) $\xi$, if there exists a constant 
$\kappa>0$ such that $\| y^{\xi}-\bar y\|_{L^p(Q)}\leq \kappa \|\xi \|_{L^r(Q)}$, for all $\xi$ that are sufficiently small. 
Here, $y^\xi$ denotes the state corresponding to the perturbation $\xi$.
We use this expression analogously for the optimal controls.}, under perturbations appearing in a tracking type 
objective functional 
and under the assumption that the perturbations are Lipschitz. Further they obtain H\"older stability for 
the states under a Tikhonov type perturbation. H\"older stability under \eqref{soclin} with exponent $1/2$ was proved 
in \cite{CT22} with respect to perturbations in the initial condition.

To improve the stability results an additional assumption is needed. This role is usually played by the structural 
assumption on the adjoint state or generally on the derivative of the Hamiltonian with respect to the control. 
In the case of an elliptic state equation, 
\cite{Qui-Wachsmuth2018} uses the structural assumption
\begin{equation}
        \exists \kappa > 0 \text{ such that }  \ \ \Big\vert\Big\{x \in \Omega : \Big\vert  \frac {\partial \bar H}{\partial u} \Big\vert   
       \le \varepsilon\Big\}\Big\vert   
     \le \kappa\varepsilon\quad \forall \varepsilon > 0.
\label{struct}
\end{equation}
In the parabolic case this assumption (with  $\Omega$ replaced with $Q$) is used in \cite{CT22}. 
We recall that the assumption \eqref{struct} implies that $\bar u$ is of bang-bang type. 
Further, \eqref{struct} implies the existence of a constant $\tilde \kappa>0$ such that the following growth property holds:
\begin{equation}
J'(\bar u)(u-\bar  u)\geq \tilde \kappa \|u-\bar u\|_{L^1(X)}^2 \ \forall u\in \mathcal U.
\label{growthfirst}
\end{equation}
For a proof see \cite{ASS16}, \cite{OSVE2} or \cite{S2015}. If the control constraints satisfy $u_a<u_b$ almost everywhere 
on $Q$, both conditions, \eqref{struct} and \eqref{growthfirst} are equivalent, 
see \cite[Proposition 6.4]{DJV2022}. In \cite{Qui-Wachsmuth2018}, using \eqref{struct} and \eqref{soclin} 
with $G=D^\tau_{\bar u}$,
the authors proof $L^1$-Lipschitz stability of the controls for an elliptic semilinear optimal control problem 
under perturbations appearing simultaneously in the objective functional and the state equation.
Assuming \eqref{struct}, \eqref{soclin} may also be weakened to the case of negative curvature,
\begin{equation} \label{soclinn}
      \exists \delta < \tilde \kappa, \ \exists \tau>0 \ \  \mbox{such that} \ \ J''(\bar u)v^2\ge -\delta \| v \|_{L^1(\Omega)}^2 
           \ \   \forall v\in C^{\tau}_{\bar u}.
\end{equation}
In \cite{CWW}, \cite{CWW2018} it was proved that \eqref{struct} together with \eqref{soclinn} 
implies, for the semililnear elliptic case, weak local optimality in $L^1(\Omega)$. Lipschitz stability results 
were also obtained in \cite{DJV2022} in the elliptic case. Finally,  for a semilinear parabolic equation
with perturbed initial data, 
\cite[Theorem 4.6]{CT22} obtains, under 
\eqref{soclin} and \eqref{struct}, $L^2-L^2$ and $L^1-L^2$-H\"older stability (see Footnote \ref{Fn-}), 
with exponent $\frac{2}{3}$, 
for the optimal states and controls respectively. Additionally, Lipschitz dependence is obtained on perturbations 
in $L^\infty(Q)$.

\section{A unified sufficiency condition} \label{SApproach} \label{SUni}

In this section, we introduce an assumption that unifies the first and second order conditions presented in the previous section.

\begin{Assumption} \label{A4}
For a  number $k\in \{0,1,2\}$, at least one of the following conditions is fulfilled: 

\smallskip\noindent
($A_k$): There exist constants $\alpha_k,\gamma_k>0$ such that
\begin{equation}  \label{E3.13.1} 
   J'(\bar u)(u - \bar u) + J''(\bar u)(u - \bar u)^2 \ge \gamma_k\|   z_{\bar u,u - \bar u}\|^{k}_{L^2(Q)}\|   u -\bar u\|^{2-k}_{L^1(Q)}
\end{equation}
for all $u \in \Uad \text{ with } \|   y_u - \bar y\|   _{C(\bar Q)} < \alpha_k$.

\smallskip\noindent
($B_k$): There exist constants $\tilde \alpha_k, \tilde \gamma_k>0$ such that \reff{E3.13.1} holds 
	for all $u \in \Uad$ such that $\|   u - \bar u\|_{L^1(Q)} < \tilde\alpha_k$.
\end{Assumption}

In the context of optimal control of PDE's the assumptions ($A_0$) and ($B_0$) were first introduced  
in \cite{DJV2022} and for $k=1,2$ in \cite{CDJ2022}.  Assumption \ref{A4}($B_0$) originates from optimal control theory 
of ODE's where it was first introduced in \cite{OSVE} to deal with nonlinear affine optimal control problems. 
The cases $k=1,2$ are extensions, adapted to the nature of the PDE setting, while the case $k=0$
can be hard to verify if a structural assumption like \reff{struct} is not imposed. The assumptions corresponding to $k=1,2$ 
are applicable for the case of optimal controls that need not be bang-bang, especially the case $k=2$ seems 
natural for obtaining state stability. Assumption ($A_k$) implies strong (local) optimality, while
Assumption ($B_k$)  leads to weak (local) optimality. As seen below, in some cases the two assumptions are equivalent. 

For an optimal control problem subject to an semilinear elliptic equation the claim of the next proposition with
$k=0$ was proven in \cite[Proposition 5.2]{CDJ2022}. 

\begin{Proposition} \label{equivasu}
For any $k\in\{0,1,2\}$, Assumption ($A_k$) implies ($B_k$).
If $\bar u $ is bang-bang 
(that is, $\bar u(x,t) \in \{ u_a(x,t), u_b(x,t) \}$ for a.e. $(x,t) \in Q$)  then assumptions ($A_k$) and ($B_k$) are equivalent.
\end{Proposition}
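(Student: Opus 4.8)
The plan is to prove the two implications separately, starting with the general implication ($A_k$) $\Rightarrow$ ($B_k$), which holds for every $k$, and then establishing the reverse implication under the bang-bang hypothesis. For the first implication, the key observation is that both assumptions assert the \emph{same} inequality \reff{E3.13.1}; they differ only in the neighborhood on which it is required to hold. Assumption ($A_k$) imposes it on controls $u$ close to $\bar u$ in the sense $\|y_u-\bar y\|_{C(\bar Q)}<\alpha_k$ (a strong-optimality neighborhood), while ($B_k$) imposes it on controls close in the sense $\|u-\bar u\|_{L^1(Q)}<\tilde\alpha_k$ (a weak-optimality neighborhood). So the task reduces to showing that $L^1$-closeness of controls forces $C(\bar Q)$-closeness of the corresponding states. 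I would invoke Theorem \ref{estsemeq}: since $\Uad$ is bounded in $L^\infty(Q)$ (Remark \ref{rebound}), all controls in question are uniformly bounded, and I can use the weak continuity \reff{semilinweak} of the control-to-state map together with the uniform bounds to conclude that $\|u-\bar u\|_{L^1(Q)}$ small implies $\|y_u-\bar y\|_{C(\bar Q)}$ small. Concretely, I would argue by contradiction: if no $\tilde\alpha_k$ worked, there would be a sequence $u_j\in\Uad$ with $\|u_j-\bar u\|_{L^1(Q)}\to 0$ but $\|y_{u_j}-\bar y\|_{C(\bar Q)}\ge\alpha_k$; the $L^\infty$-bound gives a weakly-$L^r$ convergent subsequence (whose limit must be $\bar u$, identified via the $L^1$ convergence), and \reff{semilinweak} then yields $\|y_{u_j}-\bar y\|_{C(\bar Q)}\to 0$, a contradiction. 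Hence a suitable $\tilde\alpha_k$ exists, and one sets $\tilde\gamma_k=\gamma_k$.

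For the converse under the bang-bang assumption, I must show that $L^1$-closeness of controls is in fact \emph{equivalent} to $C(\bar Q)$-closeness of states in this setting, so that the two neighborhoods are comparable and ($B_k$) $\Rightarrow$ ($A_k$) follows by the symmetric version of the above argument. The forward direction ($L^1$ small $\Rightarrow$ state small) is exactly what was just proved and needs no bang-bang hypothesis. The point of bang-bangness is the reverse: I need that $\|y_u-\bar y\|_{C(\bar Q)}$ small implies $\|u-\bar u\|_{L^1(Q)}$ small. This is where the structure $\bar u(x,t)\in\{u_a(x,t),u_b(x,t)\}$ enters. Again I would argue by contradiction, but the crucial step is to translate state-smallness back into control-smallness. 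The natural tool is the linearized estimate \reff{E2.14.2} of Lemma \ref{sprep}(ii), which says that for $u,\bar u$ with $\|y_u-\bar y\|_{C(\bar Q)}<\varepsilon$ one has $\tfrac12\|y_u-\bar y\|_X\le\|z_{\bar u,u-\bar u}\|_X$; taking $X=L^2(Q)$ relates the state difference to the linearized state $z_{\bar u,u-\bar u}$ driven by $u-\bar u$.

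The main obstacle is precisely this reverse estimate: controlling $\|u-\bar u\|_{L^1(Q)}$ from above by a quantity that tends to zero when the states converge. Lemma \ref{sprep} bounds the state difference \emph{below} by the linearized state, but I need to go from the linearized state back to the $L^1$-norm of the control perturbation, and the solution operator of the linearized equation \reff{stddt} is smoothing, hence not boundedly invertible on these spaces in general. This is exactly where the bang-bang hypothesis is indispensable: because $\bar u$ takes only the extreme values $u_a,u_b$ and $u\in\Uad$ lies between them, the difference $u-\bar u$ has a \emph{fixed sign} on each of the sets $[\bar u=u_a]$ and $[\bar u=u_b]$, which lets me write $|u-\bar u|=\pm(u-\bar u)$ pointwise and thereby express $\|u-\bar u\|_{L^1(Q)}$ as a signed integral $\int_Q (u-\bar u)\,\sigma\dx\dt$ with $\sigma\in\{-1,+1\}$ determined by the active constraint. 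I expect the argument to pair this signed integral against the adjoint state: on the bang-bang set $\frac{\partial\bar H}{\partial u}$ has a definite sign matching $\sigma$, so $\|u-\bar u\|_{L^1(Q)}$ is comparable to $J'(\bar u)(u-\bar u)=\int_Q(\bar p+m\bar y+g)(u-\bar u)\dx\dt$, and the latter is controlled through the adjoint/linearized machinery by the state difference. Assembling these comparabilities shows that the two neighborhoods in ($A_k$) and ($B_k$) define the same class of admissible $u$ up to shrinking the radii, whence the inequality \reff{E3.13.1} holds on one neighborhood if and only if it holds on the other, giving the claimed equivalence. I would keep the exponents $k$ generic throughout, since $k$ enters only through the fixed right-hand side of \reff{E3.13.1} and plays no role in the neighborhood comparison that is the heart of the proof.
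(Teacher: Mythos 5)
Your first implication is correct, though you take a softer route than the paper. The paper does not need compactness here: it simply writes the equation for $y_u-\bar y$, applies the $L^r\to C(\bar Q)$ estimate \reff{clr}, and interpolates using the uniform bound $\|u-\bar u\|_{L^\infty(Q)}\le 2M_{\mathcal U}$ to get the quantitative inequality $\|y_u-\bar y\|_{C(\bar Q)}\le C_r(2M_{\mathcal U})^{(r-1)/r}\|u-\bar u\|_{L^1(Q)}^{1/r}$, which yields an explicit $\tilde\alpha_k=\alpha_k^r/(C_r^r(2M_{\mathcal U})^{r-1})$. Your contradiction argument via \reff{semilinweak} also works, but it is worth noticing that the one-line estimate is available and is used repeatedly elsewhere in the paper.

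The converse is where your proposal has a genuine gap. You correctly isolate the key difficulty (passing from $\|y_u-\bar y\|_{C(\bar Q)}$ small to $\|u-\bar u\|_{L^1(Q)}$ small) and correctly observe that bang-bangness gives $|u-\bar u|=(u-\bar u)\sigma$ pointwise with $\sigma=\pm 1$ determined by the active bound. But the mechanism you then propose --- that $\sigma$ matches the sign of $\frac{\partial \bar H}{\partial u}$ so that $\|u-\bar u\|_{L^1(Q)}$ is \emph{comparable} to $J'(\bar u)(u-\bar u)$ --- is false under the stated hypotheses. Bang-bangness of $\bar u$ does not imply $\frac{\partial\bar H}{\partial u}\neq 0$ a.e., and even where the sign is correct $\frac{\partial\bar H}{\partial u}$ may be arbitrarily small; a lower bound of the form $J'(\bar u)(u-\bar u)\ge c\|u-\bar u\|_{L^1(Q)}$ (or its squared version \reff{growthfirst}) is exactly the structural assumption \reff{struct}, which is strictly stronger than bang-bangness and is not assumed in the proposition. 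No quantitative bound of $\|u-\bar u\|_{L^1(Q)}$ by $\|y_u-\bar y\|_{C(\bar Q)}$ is available or needed. The paper's argument is purely qualitative: assuming $(A_k)$ fails, one gets a sequence $u_l$ with $\|y_{u_l}-\bar y\|_{C(\bar Q)}<1/l$ violating \reff{E3.13.1}; boundedness in $L^\infty(Q)$ gives a weak$^*$ convergent subsequence, whose limit must be $\bar u$ because the state determines the control; and then the \emph{only} use of bang-bangness is that weak$^*$ convergence of admissible controls to an extremal control upgrades to strong $L^1(Q)$ convergence --- precisely because $\|u_l-\bar u\|_{L^1(Q)}=\int_Q(u_l-\bar u)\sigma\dx\dt$ is a weak$^*$ pairing against the fixed function $\sigma\in L^1(Q)$, with no adjoint state involved. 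Once $\|u_l-\bar u\|_{L^1(Q)}<\tilde\alpha_k$ for large $l$, the inequality from $(B_k)$ contradicts the assumed failure. You should replace the Hamiltonian-pairing step with this weak$^*$ argument; the rest of your outline then goes through.
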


The proof is given in Appendix A.

\begin{Remark}
We compare the items in Assumption \ref{A4} to the ones using \eqref{struct} and \eqref{soclinn} or \eqref{soclin}.
\begin{enumerate}
\item Assumption \ref{A4}($A_0$)  is implied by the structural assumption \eqref{struct} and also allows for negative 
curvature, similar to \eqref{soclinn}. For details see \cite[Theorem~6.3]{DJV2022}.
\item Assumption \ref{A4}($A_1$)  is implied by the structural assumption \eqref{struct} together with \eqref{soclin}. 
This is clear 
by \eqref{growthfirst} and by using $v$ and $w$ as defined in Lemma \ref{vandw} and arguing as in Corollary \ref{equi1},
both presented below in this section.
\item Assumption \ref{A4}($A_2$) is implied by \eqref{soclin} together with the first order necessary condition. 
\end{enumerate}
\end{Remark}

\subsection{Sufficiency for optimality of the unified condition} \label{SSsufU}

In this subsection we show that  assumptions \ref{A4}($A_k$) and ($B_k$) are sufficient either for strict weak 
or strict strong local optimality, correspondingly.

\begin{Theorem}
The following holds.
\begin{enumerate}
\item Let $m=0$ in \reff{genobj}. Let $\bar u \in \Uad$ satisfy the optimality conditions \eqref{E3.7}--\eqref{E3.9} 
and Assumption \ref{A4}($A_k$) with some $k\in \{0,1,2\}$. Then, there exist $\varepsilon_k,\kappa_k > 0$ such that:
\begin{equation} \label{E3.12}
      J(\bar u) + \frac{\kappa_k}{2}\|   y_u - \bar y\|   ^{k}_{L^2(Q)}\|   u - \bar u\|   _{L^1(Q)}^{2-k} \le J(u)
\end{equation}
for all $u \in \Uad \text{ such that } \| y_u - \bar y\|   _{C(\bar Q)} < \varepsilon_k$.
\item Let $\bar u \in \Uad$ satisfy the optimality conditions \eqref{E3.7}--\eqref{E3.9} and Assumption \ref{A4}($B_k$) 
with some $k\in \{0,1,2\}$. Then, there exist $\varepsilon_k,\kappa_k > 0$ such that \eqref{E3.12} holds
 for all $u \in \Uad \text{ such that } \|   u - \bar u\|   _{L^1(Q)} < \varepsilon_k$.
\end{enumerate}
\label{T3.3}
\end{Theorem}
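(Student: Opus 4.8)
The plan is to expand $J(u)$ to second order around $\bar u$, feed the joint first/second order lower bound \eqref{E3.13.1} into the expansion, and then control the Taylor remainder; the latter is the only genuine work. Write $v:=u-\bar u$ and $u_\theta:=\bar u+\theta v$, $\theta\in(0,1)$. Since $J$ is of class $C^2$ (Theorem~\ref{T3.1}), the mean value form of Taylor's theorem gives, for some $\theta\in(0,1)$,
\[
J(u)-J(\bar u)=J'(\bar u)v+\tfrac12 J''(u_\theta)v^2
=\tfrac12\big(J'(\bar u)v+J''(\bar u)v^2\big)+\tfrac12 J'(\bar u)v+\tfrac12\big(J''(u_\theta)-J''(\bar u)\big)v^2 .
\]
The first bracket is bounded below by $\tfrac12\gamma_k\|z_{\bar u,v}\|_{L^2(Q)}^k\|v\|_{L^1(Q)}^{2-k}$ by Assumption~\ref{A4}\,($A_k$) (resp.\ ($B_k$)), while the middle term is nonnegative because the variational inequality \eqref{E3.9} is exactly $J'(\bar u)v\ge0$ (see \eqref{E3.4}). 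Hence everything reduces to showing that $R:=\big(J''(u_\theta)-J''(\bar u)\big)v^2$ is dominated in absolute value by a small multiple of $\|z_{\bar u,v}\|_{L^2(Q)}^k\|v\|_{L^1(Q)}^{2-k}$ once $\|y_u-\bar y\|_{C(\bar Q)}$ is small. Granting this, \eqref{E3.12} follows with $\kappa_k$ slightly below $\gamma_k$, after replacing $\|z_{\bar u,v}\|_{L^2(Q)}$ by $\|y_u-\bar y\|_{L^2(Q)}$ via the two–sided comparison \eqref{E2.14.2}.

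For the remainder I would first control the intermediate state: since $z_{\bar u,u_\theta-\bar u}=\theta z_{\bar u,v}$, the comparison \eqref{E2.14.2} together with the $\alpha$–independent estimates of Theorem~\ref{mainex} yields $\|y_{u_\theta}-\bar y\|_{C(\bar Q)}\le C\|y_u-\bar y\|_{C(\bar Q)}$ for all $\theta\in[0,1]$, so that $u_\theta$ stays in the neighbourhood where Lemma~\ref{sprep} applies. Using \eqref{E3.5.a}–\eqref{E3.5} (with $m=0$ in part~1) one writes $R$ as a difference of integrals of $\varphi_u z_{u,v}^2$, where $\varphi_u:=\tfrac{\partial^2 L}{\partial y^2}(\cdot,y_u,u)-p_u\tfrac{\partial^2 f}{\partial y^2}(\cdot,y_u)$, and splits it as $\int_Q\varphi_{u_\theta}\big(z_{u_\theta,v}^2-z_{\bar u,v}^2\big)$ plus $\int_Q(\varphi_{u_\theta}-\varphi_{\bar u})z_{\bar u,v}^2$. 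The first piece is handled by \eqref{E33} and \eqref{E2.15.2}; the second by the uniform boundedness of states and adjoints (Remark~\ref{rebound}), the local Lipschitz continuity of the second derivatives of $f$ and $L_0$ (Assumption~\ref{exist.2}), and the continuity of the control–to–adjoint map, which together give $\|\varphi_{u_\theta}-\varphi_{\bar u}\|_{L^\infty(Q)}\to0$. Both pieces are thus $\le\eta(\varepsilon)\|z_{\bar u,v}\|_{L^2(Q)}^2$ with $\eta(\varepsilon)\to0$ as $\varepsilon\to0$.

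The hard part is matching norms for $k\in\{0,1\}$. The bound $|R|\le\eta(\varepsilon)\|z_{\bar u,v}\|_{L^2(Q)}^2$ has the right size only when $k=2$, since in general $\|z_{\bar u,v}\|_{L^2(Q)}$ is \emph{not} controlled by $\|v\|_{L^1(Q)}$: Lemma~\ref{estLs} only delivers $\|z_{\bar u,v}\|_{L^{s}(Q)}\le C\|v\|_{L^1(Q)}$ for $s<\tfrac{n+2}{n}$, which reaches $2$ only in dimension $n=1$. For $k=0,1$ I would therefore refine the remainder estimate so as to extract the missing factor $\|v\|_{L^1(Q)}^{2-k}$, either by interpolating $\|z_{\bar u,v}\|_{L^2(Q)}$ between $L^{s}(Q)$ and $L^\infty(Q)$ (using $\|z_{\bar u,v}\|_{L^\infty(Q)}\le\tfrac32\|y_u-\bar y\|_{C(\bar Q)}$ from \eqref{E2.14.2}) or by exploiting that the nonnegative term $J'(\bar u)v$ absorbs the part of $R$ carried by the region where $|\partial_u\bar H|$ is large. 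I expect this norm–matching step, and the accompanying splitting of $Q$ according to the size of $\partial_u\bar H$, to be the true difficulty of the proof.

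Finally, part~2 (Assumption~\ref{A4}\,($B_k$)) follows the same scheme on the $L^1$–ball. Because all feasible controls are uniformly bounded in $L^\infty(Q)$ (Remark~\ref{rebound}), a small $\|u-\bar u\|_{L^1(Q)}$ forces a small $\|u-\bar u\|_{L^r(Q)}$ and hence, by Theorem~\ref{estsemeq}, a small $\|y_u-\bar y\|_{C(\bar Q)}$; the remainder analysis above then applies verbatim. The only additional ingredient is the bilinear term $2m\int_Q z_{u,v}v$ appearing in \eqref{E3.5} when $m\neq0$, whose increment $2m\int_Q(z_{u_\theta,v}-z_{\bar u,v})v$ is estimated by \eqref{E33} combined with the uniform $L^\infty$ bound on $v$, and is absorbed by the same argument.
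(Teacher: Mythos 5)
Your proposal follows essentially the same route as the paper: the identity $J(u)-J(\bar u)=\tfrac12\big(J'(\bar u)v+J''(\bar u)v^2\big)+\tfrac12 J'(\bar u)v+\tfrac12\big(J''(u_\theta)-J''(\bar u)\big)v^2$, discarding the nonnegative middle term via \eqref{E3.9}, invoking Assumption \ref{A4} on the first bracket, and absorbing the remainder once it is shown to be $o(1)\cdot\|z_{\bar u,v}\|_{L^2(Q)}^{k}\|v\|_{L^1(Q)}^{2-k}$, finishing with \eqref{E2.14.2}. The ``norm--matching'' step you single out as the real difficulty for $k\in\{0,1\}$ is precisely what the paper isolates in Lemma \ref{biglemmat} and Corollary \ref{bigcorollary}, and the first of your two suggested remedies (interpolating $z_{\bar u,v}$ between $L^{s}(Q)$, $s<\tfrac{n+2}{n}$, and the sup norm, using Lemma \ref{estLs} and the $C(\bar Q)$-smallness of $y_u-\bar y$) is the one the paper actually carries out.
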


Before presenting a proof of Theorem \ref{T3.3}, we establish some technical results. 
The following lemma was proved for various types of objective functionals, see e.g.
\cite[Lemma 6]{CT16},\cite[Lemma 3.11]{CRT15}. Nevertheless, our objective functional is more general, 
therefore we present in Appendix A an adapted proof.

\begin{Lemma} Let $\bar u \in \Uad$. The following holds.
\begin{enumerate}
\item Let $m=0$ hold. For every $\rho > 0$ there exists $\varepsilon > 0$ such that 
\begin{align}
 \vert  [J''(\bar u + \theta(u - \bar u)) - J''(\bar u)](u-\bar u)^2\vert \leq \rho\|z_{\bar u,u-\bar u}\|^2_{L^2(Q)}
\label{eins}
\end{align}
for all $u \in \Uad$ with $\|   y_u - \bar y\|   _{C(\bar Q)} < \varepsilon$ and $\theta \in [0,1]$. 
\item For every $\rho > 0$ there exists $\varepsilon > 0$ such that \eqref{eins} holds
for all $u \in \Uad$ with $\| u - \bar u\|_{L^1( Q)} < \varepsilon$ and $\theta \in [0,1]$. 
\label{zwei}
\end{enumerate}
\label{biglemmat}
\end{Lemma}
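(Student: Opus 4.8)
The plan is to work from the explicit representation of the second derivative in Theorem \ref{T3.1}. Writing $v := u - \bar u$ and $u_\theta := \bar u + \theta v$, note first that $u_\theta \in \Uad$ by convexity of $\Uad$. Abbreviating $\varphi_w := \frac{\partial^2 L}{\partial y^2}(\cdot,\cdot,y_w,w) - p_w\frac{\partial^2 f}{\partial y^2}(\cdot,\cdot,y_w)$ for $w \in \Uad$, formulas \eqref{E3.5.a}--\eqref{E3.5} give
\[
[J''(u_\theta) - J''(\bar u)]v^2 = \int_Q\big(\varphi_{u_\theta}z_{u_\theta,v}^2 - \varphi_{\bar u}z_{\bar u,v}^2\big)\dx\dt + 2m\int_Q(z_{u_\theta,v} - z_{\bar u,v})\,v\dx\dt .
\]
I would treat the quadratic (coefficient) integral first, since it is the only term present when $m=0$, by inserting the cross term $\varphi_{u_\theta}z_{u_\theta,v}^2 - \varphi_{\bar u}z_{\bar u,v}^2 = (\varphi_{u_\theta} - \varphi_{\bar u})z_{u_\theta,v}^2 + \varphi_{\bar u}(z_{u_\theta,v}^2 - z_{\bar u,v}^2)$ and estimating each summand separately.

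The key analytic input is the Lipschitz bound $\|\varphi_{u_\theta} - \varphi_{\bar u}\|_{L^\infty(Q)} \le C\|y_{u_\theta} - \bar y\|_{C(\bar Q)}$. The $L_0$-part follows from the local Lipschitz continuity of $\partial^2 L_0/\partial y^2$ in $y$ (Assumption \ref{exist.2}) together with the uniform bound \eqref{unibound}; writing the remaining part as $(p_{u_\theta} - p_{\bar u})\frac{\partial^2 f}{\partial y^2}(\cdot,y_{u_\theta}) + p_{\bar u}\big(\frac{\partial^2 f}{\partial y^2}(\cdot,y_{u_\theta}) - \frac{\partial^2 f}{\partial y^2}(\cdot,y_{\bar u})\big)$, the second summand is again Lipschitz in $y$, while the first requires continuity of the adjoint. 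For the latter I would subtract the two adjoint equations \eqref{E3.6}: their difference solves a backward linear equation with nonnegative zeroth-order coefficient $\frac{\partial f}{\partial y}(\cdot,y_{u_\theta})\ge 0$ and right-hand side bounded in $L^r(Q)$ by $C\|y_{u_\theta} - \bar y\|_{C(\bar Q)}$ (for $m=0$; for general $m$ there is an extra term $m\theta v$, small under the reduction below), so the $\alpha$-uniform estimate \eqref{clr} of Theorem \ref{mainex}, after time reversal, yields $\|p_{u_\theta} - p_{\bar u}\|_{C(\bar Q)} \le C\|y_{u_\theta} - \bar y\|_{C(\bar Q)}$. With this in hand, Lemma \ref{sprep}(ii) bounds $\|z_{u_\theta,v}\|_{L^2}\le \tfrac32\|z_{\bar u,v}\|_{L^2}$ and Lemma \ref{sprep}(i) gives $\|z_{u_\theta,v} - z_{\bar u,v}\|_{L^2}\le M_2\|y_{u_\theta}-\bar y\|_{C(\bar Q)}\|z_{\bar u,v}\|_{L^2}$; using $z_{u_\theta,v}^2 - z_{\bar u,v}^2 = (z_{u_\theta,v} - z_{\bar u,v})(z_{u_\theta,v} + z_{\bar u,v})$, both summands of the quadratic integral are bounded by $C\|y_{u_\theta}-\bar y\|_{C(\bar Q)}\|z_{\bar u,v}\|_{L^2}^2$.

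To close part 1 I must make every constant uniform over $\theta$, which is the first delicate point, since the hypothesis controls $\|y_u - \bar y\|_{C(\bar Q)} < \varepsilon$ but not $\|y_{u_\theta} - \bar y\|_{C(\bar Q)}$. I would run a short continuity argument on the map $\theta \mapsto g(\theta):=\|y_{u_\theta} - \bar y\|_{C(\bar Q)}$, which is continuous with $g(0)=0$: on the subinterval where $g$ stays below the threshold of Lemma \ref{sprep}(ii), applying the two-sided estimate \eqref{E2.14.2} to $u_\theta$ and to $u$ (and using $z_{\bar u,u_\theta-\bar u}=\theta z_{\bar u,v}$) gives $g(\theta)\le 3\theta\,\|y_u-\bar y\|_{C(\bar Q)} \le 3\varepsilon$, and a connectedness/bootstrap argument shows this persists for all $\theta\in[0,1]$ once $3\varepsilon$ is below the threshold. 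Hence every constant above is $\le C\varepsilon$, and choosing $\varepsilon$ with $C\varepsilon \le \rho$ proves \eqref{eins}.

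For part 2 I would first reduce the $L^1$-hypothesis to the setting of part 1: the $L^\infty$-bound \eqref{unibound} gives $\|v\|_{L^r(Q)}^r \le (2M_{\mathcal U})^{r-1}\|v\|_{L^1(Q)}$, so $L^1$-smallness forces $\|v\|_{L^r(Q)}$, and hence $\|y_u - \bar y\|_{C(\bar Q)}$ via \eqref{semilin}, to be small; the quadratic integral is then handled exactly as above. The main obstacle is the remaining mixed term $2m\int_Q(z_{u_\theta,v}-z_{\bar u,v})\,v\dx\dt$, which carries a genuine factor of $v$ rather than a second copy of $z_{\bar u,v}$. Here I would again invoke Lemma \ref{sprep}(i) to extract the gain $\|z_{u_\theta,v}-z_{\bar u,v}\|_{L^2}\le M_2\|y_{u_\theta}-\bar y\|_{C(\bar Q)}\|z_{\bar u,v}\|_{L^2}$ provided by the smallness of the state difference, and combine it with the uniform bound \eqref{unibound} and an interpolation of $\|v\|_{L^2}$ between $\|v\|_{L^\infty}$ and $\|v\|_{L^1}$; reconciling this mixed pairing with the right-hand side of \eqref{eins} is the step that most directly exploits the affine structure and where the case $m=0$ of part 1 is visibly simpler, so I expect this to be the crux of the argument and the part requiring the most care.
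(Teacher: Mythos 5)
Your treatment of the quadratic part is correct and follows essentially the paper's route: the paper splits the same integral into the pieces $I_1$--$I_4$ (coefficient differences in $\frac{\partial^2 L_0}{\partial y^2}$, in the adjoint state, in $\frac{\partial^2 f}{\partial y^2}$, and the difference $z_{u_\theta,u-\bar u}^2-z_{\bar u,u-\bar u}^2$) and estimates them with Remark \ref{Rlip}, Lemma \ref{sprep} and the adjoint-continuity Lemma \ref{L3.2}, exactly as you propose. The one place you diverge is the uniformity in $\theta$: the paper does not bootstrap via \eqref{E2.14.2} and $z_{\bar u,\theta v}=\theta z_{\bar u,v}$, but invokes the direct PDE estimate $\|y_{u_\theta}-\bar y\|_{C(\bar Q)}\le B\|y_u-\bar y\|_{C(\bar Q)}$ of Lemma \ref{L3.1} (quoted from \cite[Lemma 3.5]{CDJ2022}); your connectedness argument is a valid, more self-contained substitute, at the cost of only working below the threshold of Lemma \ref{sprep}(ii), which is all that is needed here.

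The genuine gap is the cross term $2m\int_Q(z_{u_\theta,v}-z_{\bar u,v})v\dx\dt$ in part 2, which you correctly identify as the crux but do not close, and the pairing you sketch cannot close it. Estimating it by Cauchy--Schwarz as $2|m|\,\|v\|_{L^2(Q)}\|z_{u_\theta,v}-z_{\bar u,v}\|_{L^2(Q)}$ and then applying \eqref{E33} leaves you with a bound of the form $C\|v\|_{L^2(Q)}\|y_{u_\theta}-\bar y\|_{C(\bar Q)}\|z_{\bar u,v}\|_{L^2(Q)}$, and there is no way to convert the factor $\|v\|_{L^2(Q)}$ (or $\|v\|_{L^1(Q)}$) into a second factor of $\|z_{\bar u,v}\|_{L^2(Q)}$: no reverse estimate of $v$ by $z_{\bar u,v}$ exists, so you cannot reach the right-hand side $\rho\|z_{\bar u,v}\|^2_{L^2(Q)}$ of \eqref{eins}. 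The paper's proof avoids this by an \emph{asymmetric} H\"older pairing with exponents $s\in[1,\tfrac{n+2}{n})$ and $s'$: it bounds $\|v\|_{L^{s'}(Q)}\le(2M_{\mathcal U})^{(s'-1)/s'}\|v\|_{L^1(Q)}^{1/s'}$, which supplies the smallness, and controls $\|z_{u_\theta,v}-z_{\bar u,v}\|_{L^s(Q)}$ by applying the $L^1\to L^s$ regularization estimate \eqref{aeqestLs} of Lemma \ref{estLs} to the equation \eqref{theta2} satisfied by the difference; this yields a product $\|y_{u_\theta}-\bar y\|_{L^2(Q)}\|z_{u_\theta,v}\|_{L^2(Q)}$ of two quantities each comparable (via \eqref{E2.14.2}--\eqref{E2.15.2}) to $\|z_{\bar u,v}\|_{L^2(Q)}$, giving precisely $\rho_5\|z_{\bar u,v}\|^2_{L^2(Q)}$. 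Without this $L^s$/$L^{s'}$ device (or an equivalent), part 2 for $m\neq 0$ remains unproved in your proposal.
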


For the assumptions with $k\in \{0,1\}$, we need the subsequent corollary, which is also given in Appendix A.

\begin{Corollary} \label{bigcorollary}
 Let $\bar u \in \Uad$. The following holds for $m=0$:
\begin{enumerate}
\item  For every $\rho > 0$ there exists $\varepsilon > 0$ such that
\begin{equation}
\vert  [J''(\bar u + \theta(u - \bar u)) - J''(\bar u)](u-\bar u)^2\vert   \leq \rho\|   z_{\bar u,u-\bar u}\|_{L^2(Q)}\|   u-\bar u\| _{L^1(Q)} 
\label{E3.17.1}
\end{equation}
for all $u \in \mathcal U$ with $\|   y_u - \bar y\|   _{C(\bar Q)} < \varepsilon$  and for all $ \theta \in [0,1]$.
\item For every $\rho > 0$ there exists $\varepsilon > 0$ such that
\begin{equation}
\vert  [J''(\bar u + \theta(u - \bar u)) - J''(\bar u)](u-\bar u)^2\vert   \leq \rho\|   u-\bar u\|_{L^1(Q)}^2 
\label{E3.17.2}
\end{equation}
for all $u \in \Uad$ with $\| y_u - \bar y\|_{C(\bar Q)} < \varepsilon$ and for all $\theta \in [0,1]$.
\end{enumerate}
The same assertions hold for $m\neq 0$ if one requires $\|u-\bar
u\|_{L^1(Q)}$ to be small instead of $\|   y_u - \bar y\| _{C(\bar Q)}$.
\end{Corollary}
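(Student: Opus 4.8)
The plan is to deduce both inequalities from the estimate underlying Lemma~\ref{biglemmat}, but kept in a sharper form in which the small factor is explicit, and then to trade powers of $\|z_{\bar u,u-\bar u}\|_{L^2(Q)}$ for powers of $\|u-\bar u\|_{L^1(Q)}$ by interpolation. Throughout set $v:=u-\bar u$, $\xi:=\bar u+\theta(u-\bar u)\in\Uad$ and $z:=z_{\bar u,v}$. First I would rerun the computation behind Lemma~\ref{biglemmat} (for $m=0$) to record the sharper form
\[
   \bigl|[J''(\xi)-J''(\bar u)]v^2\bigr|\ \le\ C\,\|y_\xi-\bar y\|_{C(\bar Q)}\,\|z\|_{L^2(Q)}^2 .
\]
Writing $J''(w)v^2=\int_Q\varphi_w\,z_{w,v}^2$ with $\varphi_w:=\frac{\partial^2L_0}{\partial y^2}(\cdot,y_w)-p_w\frac{\partial^2f}{\partial y^2}(\cdot,y_w)$, I split the difference as $\int_Q\varphi_\xi(z_{\xi,v}^2-z^2)+\int_Q(\varphi_\xi-\varphi_{\bar u})z^2$; Lemma~\ref{sprep}(i)--(ii) controls the first integral by $C\|y_\xi-\bar y\|_{C(\bar Q)}\|z\|_{L^2(Q)}^2$, and the local Lipschitz continuity of the second derivatives (Assumption~\ref{exist.2}) together with $\|p_\xi-\bar p\|_{C(\bar Q)}\le C\|y_\xi-\bar y\|_{C(\bar Q)}$ controls the second. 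Finally Lemma~\ref{sprep}(ii) gives $\|y_\xi-\bar y\|_{C(\bar Q)}\le 2\|z\|_{L^\infty(Q)}\le 3\|y_u-\bar y\|_{C(\bar Q)}$, so this factor is simultaneously comparable to $\|z\|_{L^\infty(Q)}$ and bounded by $3\varepsilon$.

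Two conversion estimates feed the interpolation. Lemma~\ref{estLs} yields $\|z\|_{L^s(Q)}\le C\|v\|_{L^1(Q)}$ for any fixed $s\in[1,\frac{n+2}{n})$. For the sup-norm I use that $z$ solves \eqref{lin.1} with $\alpha=f_y(\cdot,y_{\bar u})\ge0$ and that $3>\max\{2,1+\frac n2\}$ for $n\le 3$, so Theorem~\ref{mainex} applies with exponent $3$ and, by \eqref{unibound}, $\|v\|_{L^3(Q)}^3\le\|v\|_{L^\infty(Q)}^2\|v\|_{L^1(Q)}\le(2M_{\mathcal U})^2\|v\|_{L^1(Q)}$; hence $\|z\|_{L^\infty(Q)}\le C\|v\|_{L^1(Q)}^{1/3}$. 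I then fix once and for all $s\in(\frac32,\min\{2,\frac{n+2}{n}\})$, a nonempty interval for every $n\in\{1,2,3\}$.

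The core step is the interpolation $\|z\|_{L^2(Q)}^2\le\|z\|_{L^s(Q)}^{s}\|z\|_{L^\infty(Q)}^{2-s}$. For assertion~1 I bound the left-hand side by $2C\|z\|_{L^2(Q)}\cdot\|z\|_{L^\infty(Q)}\|z\|_{L^2(Q)}$ and estimate the last product: writing $\|z\|_{L^\infty(Q)}\|z\|_{L^2(Q)}\le\|z\|_{L^s(Q)}^{s/2}\|z\|_{L^\infty(Q)}^{2-s/2}$ and splitting $\|z\|_{L^\infty(Q)}^{2-s/2}=\|z\|_{L^\infty(Q)}^{s-1}\,\|z\|_{L^\infty(Q)}^{3-3s/2}$, the first factor is $\le C\varepsilon^{s-1}$ and the rest feeds the two conversion estimates to give exactly one power of $\|v\|_{L^1(Q)}$; altogether $\bigl|[J''(\xi)-J''(\bar u)]v^2\bigr|\le C\varepsilon^{\,s-1}\|z\|_{L^2(Q)}\|v\|_{L^1(Q)}$. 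For assertion~2 I argue identically with $\|z\|_{L^\infty(Q)}\|z\|_{L^2(Q)}^2\le\|z\|_{L^s(Q)}^{s}\|z\|_{L^\infty(Q)}^{3-s}$, splitting off $\|z\|_{L^\infty(Q)}^{2s-3}\le C\varepsilon^{2s-3}$ and letting the remainder produce $\|v\|_{L^1(Q)}^2$, which gives $\bigl|[J''(\xi)-J''(\bar u)]v^2\bigr|\le C\varepsilon^{\,2s-3}\|v\|_{L^1(Q)}^2$. Because $s>\frac32$, the exponents $s-1$ and $2s-3$ are strictly positive, so for a given $\rho>0$ it suffices to choose $\varepsilon$ so small that the prefactor is $\le\rho$.

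The step I expect to be the real obstacle is exactly this conversion. In the parabolic setting the exponent in Lemma~\ref{estLs} satisfies $s<2$ for $n\ge2$, so — unlike in the elliptic case — the inequality $\|z\|_{L^2(Q)}\le C\|v\|_{L^1(Q)}$ is false, and merely quoting Lemma~\ref{biglemmat} with an abstract small constant does not reach the right-hand sides. The interpolation succeeds only because the single small quantity $\|y_\xi-\bar y\|_{C(\bar Q)}\sim\|z\|_{L^\infty(Q)}$ can be spent twice: part of it manufactures the arbitrary $\rho$ (a positive power of $\varepsilon$) and part of it, through $\|z\|_{L^\infty(Q)}\le C\|v\|_{L^1(Q)}^{1/3}$, supplies the integrability missing from Lemma~\ref{estLs}; balancing these two roles under the constraint $s>\frac32$ is the crux. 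For $m\ne0$ the difference $[J''(\xi)-J''(\bar u)]v^2$ carries the extra bilinear term $2m\int_Q(z_{\xi,v}-z)\,v$, which Lemma~\ref{sprep}(i) and Lemma~\ref{estLs} bound by $C\|y_\xi-\bar y\|_{C(\bar Q)}\|v\|_{L^1(Q)}$; its handling is more delicate and is precisely where the hypothesis is switched from smallness of $\|y_u-\bar y\|_{C(\bar Q)}$ to smallness of $\|u-\bar u\|_{L^1(Q)}$.
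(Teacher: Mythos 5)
Your proposal is correct and follows essentially the same route as the paper's proof: both start from the sharpened form $C\,\|y_u-\bar y\|_{C(\bar Q)}\|z_{\bar u,u-\bar u}\|^2_{L^2(Q)}$ of the estimate behind Lemma \ref{biglemmat}, invoke Lemma \ref{estLs} to control $\|z_{\bar u,u-\bar u}\|_{L^s(Q)}$ by $\|u-\bar u\|_{L^1(Q)}$ for $s<\tfrac{n+2}{n}$, and then interpolate between $L^s$, $L^2$ and $L^\infty$ so that part of the small sup-norm factor produces the arbitrary $\rho$ while the rest supplies the missing power of $\|u-\bar u\|_{L^1(Q)}$. The only difference is bookkeeping (you split powers of $\|z\|_{L^\infty}$ and use an $L^3$ bound where the paper splits $\|y_u-\bar y\|_{C(\bar Q)}^{l_1+l_2}$ and uses $L^{s'}$ bounds), and your treatment of the $m\neq 0$ case is sketched at the same level of detail as the paper's.
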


The next lemma clams that Assumption \ref{A4} implies a growth similar to \reff{E3.12} of the first derivative 
of the objective functional in a neighborhood of $\bar u$. 

\begin{Lemma}The following claims are fulfilled.
\begin{enumerate}
	\item Let $m=0$ and $\bar u$ satisfy assumption $(A_k)$, for some $k\in \{0,1,2\}$.
	Then, there exist $\bar \alpha_k,\bar\gamma_k > 0$ such that
	\begin{equation}
		J'(u)(u - \bar u) \ge \bar\gamma_k\|   z_{\bar u,u - \bar u}\|   ^{k}_{L^2(Q)}\|u - \bar u\|^{2-k}_{L^1(Q)}
		\label{E4.6}
	\end{equation}
	 for every $u \in \Uad \text{ with } \|   y_u - \bar y\|   _{C(\bar Q)} < \bar \alpha_k$.
	\item Let $\bar u$ satisfy assumption $(B_k)$ for some $k\in \{0,1,2\}$.
	Then, there exist $\bar \alpha_k ,\bar\gamma_k > 0$ such that \reff{E4.6} holds
	for every $u \in \Uad \text{ with }\| u -\bar u\|   _{L^1(Q)} < \bar \alpha_k$.
	\end{enumerate}
	\label{good}
\end{Lemma}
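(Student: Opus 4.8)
The plan is to bridge the gap between the second-order expression in Assumption \ref{A4} and the first derivative $J'(u)(u-\bar u)$ by means of the mean value theorem, and then to absorb the resulting remainder using the estimates already prepared in Lemma \ref{biglemmat} and Corollary \ref{bigcorollary}. Since $J$ is of class $C^2$ (Theorem \ref{T3.1}) and $\Uad$ is convex, for fixed $u\in\Uad$ the scalar function $t\mapsto J'(\bar u+t(u-\bar u))(u-\bar u)$ is $C^1$ on $[0,1]$, and the mean value theorem supplies some $\theta\in[0,1]$ with
\begin{equation*}
  J'(u)(u-\bar u) = J'(\bar u)(u-\bar u) + J''\big(\bar u+\theta(u-\bar u)\big)(u-\bar u)^2.
\end{equation*}
Adding and subtracting $J''(\bar u)(u-\bar u)^2$ then rewrites this as
\begin{equation*}
  J'(u)(u-\bar u) = \Big[J'(\bar u)(u-\bar u)+J''(\bar u)(u-\bar u)^2\Big] + \Big[J''\big(\bar u+\theta(u-\bar u)\big)-J''(\bar u)\Big](u-\bar u)^2.
\end{equation*}

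Next I would estimate the two brackets separately. The first bracket is bounded below by $\gamma_k\|z_{\bar u,u-\bar u}\|^{k}_{L^2(Q)}\|u-\bar u\|^{2-k}_{L^1(Q)}$ directly from Assumption \ref{A4}($A_k$) in part~(1), and analogously by $\tilde\gamma_k$ from ($B_k$) in part~(2), provided $u$ lies in the appropriate neighborhood of $\bar u$. For the remainder bracket I would invoke the estimate tailored to each value of $k$: Lemma \ref{biglemmat} for $k=2$, Corollary \ref{bigcorollary}(1) for $k=1$, and Corollary \ref{bigcorollary}(2) for $k=0$. The decisive point is that in all three cases the remainder is bounded in absolute value by $\rho\,\|z_{\bar u,u-\bar u}\|^{k}_{L^2(Q)}\|u-\bar u\|^{2-k}_{L^1(Q)}$, i.e. by $\rho$ times exactly the quantity controlling the first bracket.

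Combining the two bounds yields
\begin{equation*}
  J'(u)(u-\bar u) \ge (\gamma_k-\rho)\,\|z_{\bar u,u-\bar u}\|^{k}_{L^2(Q)}\|u-\bar u\|^{2-k}_{L^1(Q)},
\end{equation*}
so the choice $\rho:=\gamma_k/2$ gives \eqref{E4.6} with $\bar\gamma_k:=\gamma_k/2>0$ and $\bar\alpha_k$ taken as the minimum of $\alpha_k$ (resp. $\tilde\alpha_k$) and the radius $\varepsilon$ furnished by Lemma \ref{biglemmat} or Corollary \ref{bigcorollary} for this particular $\rho$. In part~(1) all neighborhoods are expressed through $\|y_u-\bar y\|_{C(\bar Q)}$ and $m=0$ is in force; in part~(2) one uses instead the $L^1$-neighborhoods, relying on Lemma \ref{biglemmat}(2) for $k=2$ and on the $m\neq0$ variants of Corollary \ref{bigcorollary} for $k\in\{0,1\}$, so that no restriction on $m$ is required.

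The argument is essentially a clean assembly of the machinery already in place, so I do not expect a genuine analytic difficulty. The main (though modest) obstacle is bookkeeping: matching each index $k$ to the correct remainder estimate and to the correct type of neighborhood, and verifying that both brackets are controlled by the \emph{identical} product $\|z_{\bar u,u-\bar u}\|^{k}_{L^2(Q)}\|u-\bar u\|^{2-k}_{L^1(Q)}$, which is precisely what makes the subtraction $\gamma_k-\rho$ close the estimate for every $k\in\{0,1,2\}$ at once.
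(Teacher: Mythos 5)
Your proposal is correct and follows essentially the same route as the paper: mean value theorem on $t\mapsto J'(\bar u+t(u-\bar u))(u-\bar u)$, insertion of $J''(\bar u)(u-\bar u)^2$, lower bound of the leading bracket by Assumption ($A_k$)/($B_k$), and absorption of the remainder via Lemma \ref{biglemmat} (for $k=2$) and Corollary \ref{bigcorollary} (for $k=0,1$), using the estimate $\|y_u-\bar y\|_{C(\bar Q)}\le C_r(2M_{\mathcal U})^{\frac{r-1}{r}}\|u-\bar u\|_{L^1(Q)}^{1/r}$ to pass between the two types of neighborhoods in part~(2). Your bookkeeping of which remainder estimate matches which $k$, and of the $m\neq 0$ variants needed under ($B_k$), is in fact slightly more explicit than the paper's own write-up.
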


\begin{proof} Since $J$ is of class $C^2$ we can use the mean value theorem to infer the existence of 
a function $\theta :Q\to [0,1]$ such that
\[
J'(u)(u - \bar u)-J'(\bar u)(u - \bar u) =J''(\bar u + \theta(u - \bar u))(u-\bar u)^2
\]
and under $(A_k)$ in Assumption \ref{A4}, we infer the existence of positive constants $\gamma_k$ and $\alpha_k$ such that
	\begin{align*}
		J'(u)(u - \bar u) &=J'(\bar u)(u - \bar u) + J''(\bar u)(u - \bar u)^2+[J'(u)(u - \bar u)-J'(\bar u)(u - \bar u) - J''(\bar u)(u - \bar u)^2]\\
		&\ge \gamma_k\|   z_{\bar u,u - \bar u}\|   ^{k}_{L^2(Q)}\|   u - \bar u\|   ^{2-k}_{L^1(Q)}-\vert  [J''(\bar u + \theta(u - \bar u)) - J''(\bar u)](u-\bar u)^2\vert  ,
	\end{align*}
for all $u\in \mathcal U$ with $\|    y_u-\bar y\|   _{C(\bar Q)}<\alpha_k$.
Using Lemma \ref{biglemmat}, we obtain that
	\begin{align*}
J'(u)(u - \bar u)&\ge (\gamma_k-\rho_k)\|   z_{\bar u,u - \bar u}\|   ^{k}_{L^2(Q)}\|   u - \bar u\|   ^{2-k}_{L^1(Q)}
	\end{align*}
for all $u\in \mathcal U$ with $\|    y_u-\bar y\|   _{C(\bar Q)}<\bar \alpha_k$ and $\bar{\alpha_k}:=
\min\{\alpha_k,\varepsilon_k \}$, 
where $\varepsilon_k>0$ is chosen such that $\bar\gamma_k:=\gamma_k-\rho_k>0$ holds. 
Using Corollary \ref{bigcorollary} and the estimate 
$\|y_u - \bar y\|_{L^\infty(Q)}\le C_r(2M_{\mathcal U})^{\frac{r-1}{r}}\|u - \bar u\|_{L^1(Q)}^{\frac{1}{r}}$, proves 
the case for \eqref{E4.6}.
\end{proof}
Finally, we conclude this subsection with the proof of Theorem \ref{T3.3}.\\

{\em Proof of Theorem \ref{T3.3}.}
Using the Taylor expansion and the optimality condition $J'(\bar u)(u - \bar u) \geq 0$ we have
\begin{align*}
J(u)& = J(\bar u) + J'(\bar u)(u - \bar u) + \frac{1}{2}J''(u_\theta)(u - \bar u)^2 \geq J(\bar u) + \frac{1}{2} J'(\bar u)(u - \bar u) + \frac{1}{2}J''(u_\theta)(u - \bar u)^2
\end{align*}
where $u_\theta:=\bar u+\theta(u-\bar u)$, with $\theta: Q \to [0,1]$. We continue this inequality, using that
by Assumption \ref{A4} there exist $\alpha_k>0$ and $\gamma_k>0$ such that \reff{E3.12} holds:
\begin{align*}
J(u) & \ge J(\bar u) + \frac{1}{2}[J'(\bar u)(u - \bar u) + J''(\bar u)(u - \bar u)^2] + \frac{1}{2}[J''(u_\theta) - J''(\bar u)](u - \bar u)^2]\\
 &\ge J(\bar u) + \frac{\gamma_k}{2} \|   z_{\bar u,u - \bar u}\|   ^k_{L^2(Q)}\|   u - \bar u\|   ^{2-k}_{L^1(Q)} - 
        \frac{1}{2}  \big\vert  [J''(u_\theta) - J''(\bar u)](u - \bar u)^2 \big\vert
\end{align*}
for all $u\in \mathcal U$ with either $\|    y_u-\bar y\|   _{L^\infty(Q)}<\alpha_k$ or $\|   u - \bar u \|   _{L^1(Q)}<\alpha_k$, 
depending on the chosen assumption $(A_k)$ or $(B_k)$.
Now,  either by Lemma \ref{biglemmat} or Corollary~\ref{bigcorollary} (depending on the assumption) there 
exist $\varepsilon>0$ and $\bar \gamma_k<\gamma_k$ such that
 \[
 \vert  [J''(u_\theta) - J''(\bar u)](u - \bar u)^2\vert   \leq
    \bar \gamma_k\|   z_{\bar u,u - \bar u}\|   ^k_{L^2(Q)}\|   u - \bar u\|  ^{2-k}_{L^1(Q)}
 \]
for every $u \in \Uad$ with $\|   y_u - \bar y\|   _{C(\bar Q)} <\varepsilon$. We may choose 
$\bar \alpha_k>0$ and $\bar \gamma_k>0$ according to Lemma~\ref{good} and depending 
on the chosen assumption therein. Inserting this estimate in the above expression and applying \reff{E2.14.2} gives
\begin{align*}
J(u)&\ge J(\bar u) +  \frac{1}{2}(\gamma_k-\bar \gamma_k)\|   z_{\bar u,u - \bar u}\|   ^k_{L^2(Q)}\|   u - \bar u\|   ^{2-k}_{L^1(Q)} \ge J(\bar u) + \frac{3(\gamma_k-\bar \gamma_k)}{4}\|   y_u - \bar y\|   ^{k}_{L^2(Q)}\|   u - \bar u\|   ^{2-k}_{L^1(Q)},
\end{align*}
for all $u\in \mathcal U$  with either 
$\|y_u-\bar u\| _{L^\infty(Q)}<\min\{\varepsilon,\bar\alpha_k\}$ or 
$\|u-\bar u\|_{L^1(Q)}<\min\{\frac{\varepsilon^r}{C_r^r(2M_{\mathcal U})^{(r-1)}},\bar\alpha_k \}$ depending 
on the selected $k\in \{0,1,2\}$. To complete the proof of the second claim of the theorem we use that
\[
\| y_u - \bar y\|   _{L^\infty(Q)}\leq C_r (2M_{\mathcal U})^{\frac{r-1}{r}}\|    u-\bar u\|   _{L^1(Q)}^{\frac{1}{r}}
\]
to apply Lemma \ref{biglemmat} or Corollary \ref{bigcorollary} depending on $k\in \{0,1,2\}$. 
\begin{flushright} $\square$
\end{flushright}
\endproof

\subsection{Some equivalence results for the assumptions on cones}

In this subsection we show that some of the  items in Assumption \ref{A4} can be formulated equivalently 
on the cones $D^\tau_{\bar u}$ or $C^{\tau}_{\bar u}$ respectively. This applies to ($B_k$) or to ($A_k$)
depending on whether the objective functional explicitly depends on the control or not.
We need the next lemma, the proof of which uses a result from \cite{CMCO}.

\begin{Lemma}
Let $\bar u\in \mathcal U$ satisfy the first order optimality condition \eqref{E3.7}-\eqref{E3.9} and let $u\in \mathcal U$ be given. 
For $\tau >0$, we define 
\begin{align*}
v:=\left\{\begin{array}{lll}
 0 &\text{ on } &[ \ \vert \frac {\partial \bar H}{\partial u}\vert  >\tau\ ],\\
 u-\bar u &\text{ else},&
\end{array}\right.
\end{align*}
and $ w:=u-\bar u-v$. Let $\varepsilon>0$ be given. Then there exists a constant $C>0$ such that
\begin{align}
\max\{ \|z_{\bar u,w}\|_{L^{\infty}(Q)}, \ \|z_{\bar u,v}\|   _{L^{\infty}(Q)}\}<C\max\{ \varepsilon,\varepsilon^{\frac{1}{r}}\}
\label{monotone}
\end{align}
 for all $u \in\mathcal U \text{ with }\|  u-\bar u\|_{L^{1}(Q)}<\varepsilon$.
Let $\varepsilon_0>0$ be such that \eqref{E2.14.2} holds. If the control does not appear explicitly 
in \eqref{ocp1} (that is, $m=g=0$ in \reff{genobj}), then \eqref{monotone} holds for all $u\in \mathcal U$ such that 
$u-\bar u \in G^\tau_{\bar u}$ and $\|z_{\bar u,u-\bar u}\|_{L^{\infty}(Q)}<\varepsilon_0$.
\label{vandw}
\end{Lemma}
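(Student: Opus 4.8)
The plan is to exploit the pointwise identity $v+w=u-\bar u$ together with the linearity of the linearized state equation \eqref{stddt}, which gives $z_{\bar u,v}+z_{\bar u,w}=z_{\bar u,u-\bar u}$. Since $v$ and $w$ are the restrictions of $u-\bar u$ to the complementary sets $[\,|\frac{\partial\bar H}{\partial u}|\le\tau\,]$ and $[\,|\frac{\partial\bar H}{\partial u}|>\tau\,]$, we have $|v|,|w|\le|u-\bar u|\le 2M_{\mathcal U}$ a.e.\ by Remark \ref{rebound}. Because $f_y(\cdot,\cdot,y_{\bar u})\ge 0$ lies in $L^\infty(Q)$ (Assumption \ref{exist.2}), the $C(\bar Q)$-estimate \eqref{clr} of Theorem \ref{mainex} applies to $z_{\bar u,\cdot}$ with $\alpha=f_y(\cdot,\cdot,y_{\bar u})$ and with a constant $C_r$ independent of $\alpha$; thus it suffices to control $\|v\|_{L^r(Q)}$ and $\|w\|_{L^r(Q)}$.

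For the first assertion I would use the interpolation afforded by the $L^\infty$ bound: for $\phi\in\{v,w\}$ one has $\|\phi\|_{L^r(Q)}^r=\int_Q|\phi|^{\,r-1}|\phi|\dx\dt\le(2M_{\mathcal U})^{r-1}\|\phi\|_{L^1(Q)}\le(2M_{\mathcal U})^{r-1}\|u-\bar u\|_{L^1(Q)}<(2M_{\mathcal U})^{r-1}\varepsilon$. Inserting $\|v\|_{L^r(Q)},\|w\|_{L^r(Q)}\le(2M_{\mathcal U})^{\frac{r-1}{r}}\varepsilon^{\frac1r}$ into \eqref{clr} yields $\max\{\|z_{\bar u,v}\|_{C(\bar Q)},\|z_{\bar u,w}\|_{C(\bar Q)}\}\le C\varepsilon^{\frac1r}\le C\max\{\varepsilon,\varepsilon^{\frac1r}\}$, which is \eqref{monotone} (note $\|\cdot\|_{L^\infty(Q)}=\|\cdot\|_{C(\bar Q)}$ here).

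For the second assertion, with $m=g=0$ one has $\frac{\partial\bar H}{\partial u}=\bar p$, so by \eqref{E3.4}, $J'(\bar u)(u-\bar u)=\int_Q\bar p\,(u-\bar u)\dx\dt$. The first order condition \eqref{E3.9} gives the pointwise sign $\bar p\,(u-\bar u)\ge 0$ a.e.\ (on $[\bar p>0]$ one has $\bar u=u_a$ and $u-\bar u\ge0$, symmetrically on $[\bar p<0]$), whence both $\int_Q\bar p\,v$ and $\int_Q\bar p\,w$ are nonnegative. On the support of $w$ we have $|\bar p|>\tau$ and $\bar p\,w=|\bar p|\,|w|\ge\tau|w|$, so combining with the defining inequality of $G^\tau_{\bar u}$ gives
\[
\tau\|w\|_{L^1(Q)}\le\int_Q\bar p\,w\dx\dt\le\int_Q\bar p\,(u-\bar u)\dx\dt=J'(\bar u)(u-\bar u)\le\tau\|z_{\bar u,u-\bar u}\|_{L^1(Q)}.
\]
Hence $\|w\|_{L^1(Q)}\le\|z_{\bar u,u-\bar u}\|_{L^1(Q)}\le|Q|\,\|z_{\bar u,u-\bar u}\|_{L^\infty(Q)}$, and the same interpolation plus \eqref{clr} yield $\|z_{\bar u,w}\|_{C(\bar Q)}\le C\|z_{\bar u,u-\bar u}\|_{L^\infty(Q)}^{\frac1r}$. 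Writing $z_{\bar u,v}=z_{\bar u,u-\bar u}-z_{\bar u,w}$ and using the triangle inequality introduces the extra linear term $\|z_{\bar u,u-\bar u}\|_{L^\infty(Q)}$, which is precisely why the estimate must appear with $\max\{\varepsilon,\varepsilon^{\frac1r}\}$, with $\varepsilon=\|z_{\bar u,u-\bar u}\|_{L^\infty(Q)}$; the threshold $\varepsilon_0$ and \eqref{E2.14.2} keep $u$ in the neighborhood where the relevant norm equivalences hold.

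The step requiring the most care, and where the cited result of \cite{CMCO} should enter, is the $G^\tau_{\bar u}$ passage: rigorously justifying the pointwise sign $\bar p\,(u-\bar u)\ge 0$, the separate nonnegativity of the two integrals across the level set $[\,|\bar p|=\tau\,]$, and the measurability of the decomposition, so that the displayed chain bounding $\|w\|_{L^1(Q)}$ is fully justified. The first assertion, by contrast, is essentially the interpolation estimate combined with the $\alpha$-independent $C(\bar Q)$-bound and should be routine.
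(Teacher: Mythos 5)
Your proposal is correct and follows essentially the same route as the paper: the interpolation $\|\phi\|_{L^r(Q)}^r\le(2M_{\mathcal U})^{r-1}\|\phi\|_{L^1(Q)}$ combined with the $\alpha$-independent bound \eqref{clr}, the chain $\tau\|w\|_{L^1(Q)}\le J'(\bar u)(u-\bar u)\le\tau\|z_{\bar u,u-\bar u}\|_{L^1(Q)}$ in the $G^\tau_{\bar u}$ case, and the triangle inequality $z_{\bar u,v}=z_{\bar u,u-\bar u}-z_{\bar u,w}$ producing the $\max\{\varepsilon,\varepsilon^{1/r}\}$ form. You in fact spell out the pointwise sign argument behind $\tau\|w\|_{L^1(Q)}\le J'(\bar u)(u-\bar u)$ in more detail than the paper, which delegates it to the cited reference.
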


\begin{proof}
We define $\tilde u, \hat u\in \mathcal U$ by
\begin{align*}
\tilde u:=\left\{\begin{array}{lll}\displaystyle \bar u &\text{ on}&
      [\ \vert  \frac {\partial \bar H}{\partial u}\vert  >\tau\ ],\\ u &\text{ else.}&\end{array}\right. \ \ \hat u:=
     \left\{\begin{array}{lll}\displaystyle  u &\text{ on}& [\ \vert \frac {\partial \bar H}{\partial u}\vert  >
     \tau\ ],\\ \bar u &\text{ else.}\end{array}\right.
\end{align*}
Observe that $v=\tilde u-\bar u$, $w=\hat u-\bar u$ and $u-\bar u =v+w$. It is trivial by construction 
that $\|v\|   _{L^1(Q)},\ \|w\|   _{L^1(Q)}\le \|u-\bar u\|   _{L^1(Q)}$. 
On the other hand, by \eqref{E2.14.2}, $\|z_{\bar u,u-\bar u}\|_{L^{\infty}(Q)}<\varepsilon$ implies  
$\|y_{u}-y_{\bar u}\|_{L^{\infty}(Q)}<2\varepsilon$. If $m,g=0$, we can argue as in \cite{CMCO} 
using $u-\bar u \in G^\tau_{\bar u}$ and the definition of $w$, to estimate
\begin{align*}
\tau \|w\|   _{L^1(Q)}&\le J'(\bar u)(u-\bar u)\le
\tau \|z_{\bar u,u-\bar u} \|_{L^{1}(Q)}.
\end{align*}
Thus by Theorem \ref{mainex} and \eqref{unibound}
\begin{align*}
\|z_{\bar u,w} \|   _{L^\infty(Q)}\le
\left\{\begin{array}{lll}
C_0\|z_{\bar u,u-\bar u}\|_{L^\infty(Q)}^{1/r} &\text{ if }&m,g=0,\ u-\bar u \in G^\tau_{\bar u},\\
C_0\|u-\bar u \|_{L^1(Q)}^{1/r} &\text{ else,}&
\end{array}\right.
\end{align*}
with $C_0:= C_r (2M_{\mathcal U})^{\frac{r-1}{r}}$.
For $z_{\bar u, v}$, we estimate with $C:=2(C_0+1)$
\begin{align*}
    \|z_{\bar u,v} \|_{L^\infty(Q)}&\le \|z_{\bar u,v+w} \|_{L^\infty(Q)}+\|-z_{\bar u,w} \|_{L^\infty(Q)}
     \le C\max\{\varepsilon,\varepsilon^{\frac{1}{r}}\}.
\end{align*}
In the second case the estimate holds trivially.
\end{proof}

Now we continue with the equivalence properties.

\begin{Corollary}
For $k\in \{0,2\}$,  Assumption \ref{A4}$(B_k)$ is equivalent to the following condition ($\bar B_{k}$):
there exist constants $\alpha_k,\gamma_k,\tau>0$ such that 
\begin{equation} \label{EbBk}
        J'(\bar u)(u - \bar u) + J''(\bar u)(u - \bar u)^2 \ge 
            \gamma_k\|   z_{\bar u,u - \bar u}\|^{k}_{L^2(Q)}\| u - \bar u\|^{2-k}_{L^1(Q)},
\end{equation}
for all $u \in \Uad$ for which $(u-\bar u )\in  D_{\bar u}^{\tau}\text{ and } \|u-\bar u \| _{L^{1}(Q)} < \alpha_k$.
\label{equi1}
\end{Corollary}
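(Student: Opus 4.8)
The implication $(B_k)\Rightarrow(\bar B_k)$ is immediate: once $\alpha_k\le\tilde\alpha_k$, every $u\in\Uad$ admitted in $(\bar B_k)$ (those with $u-\bar u\in D^\tau_{\bar u}$ and $\|u-\bar u\|_{L^1(Q)}<\alpha_k$) is also admitted in $(B_k)$, so \eqref{EbBk} is inherited with the same constant and any $\tau>0$. The whole content is the reverse implication $(\bar B_k)\Rightarrow(B_k)$, that is, removing the restriction to the cone $D^\tau_{\bar u}$.

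For the reverse direction, the plan is to take an arbitrary $u\in\Uad$ with $\|u-\bar u\|_{L^1(Q)}<\varepsilon$ and split $u-\bar u=v+w$ exactly as in Lemma \ref{vandw}, so that $v=\tilde u-\bar u\in D^\tau_{\bar u}$ with $\tilde u\in\Uad$, while $w=\hat u-\bar u$ is supported on $[\,\vert\partial\bar H/\partial u\vert>\tau\,]$. Since $\|v\|_{L^1(Q)}\le\|u-\bar u\|_{L^1(Q)}<\alpha_k$, condition $(\bar B_k)$ applies to $\tilde u$ and gives $J'(\bar u)v+J''(\bar u)v^2\ge\gamma_k\|z_{\bar u,v}\|^{k}_{L^2(Q)}\|v\|^{2-k}_{L^1(Q)}$. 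On the support of $w$ the first order condition \eqref{E3.9} forces $\bar u\in\{u_a,u_b\}$ with the sign of $\partial\bar H/\partial u$ that makes $\frac{\partial\bar H}{\partial u}\,w\ge\tau\vert w\vert$ pointwise, whence $J'(\bar u)w=\int_Q\frac{\partial\bar H}{\partial u}\,w\dx\dt\ge\tau\|w\|_{L^1(Q)}$. Expanding $J''(\bar u)(u-\bar u)^2=J''(\bar u)v^2+2J''(\bar u)(v,w)+J''(\bar u)w^2$ and adding, it remains to show that the surplus second order terms $2J''(\bar u)(v,w)+J''(\bar u)w^2$ are negligible against $\tau\|w\|_{L^1(Q)}$.

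The crucial estimate is that each surplus term is bounded by $\rho(\varepsilon)\|w\|_{L^1(Q)}$ with $\rho(\varepsilon)\to0$ as $\varepsilon\to0$. By the formula \eqref{E3.5.a}--\eqref{E3.5} each is an integral in which at least one factor is a linearized state $z_{\bar u,w}$ or $z_{\bar u,v}$, together with the $m$-products $z_{\bar u,v}w$, $z_{\bar u,w}v$, $z_{\bar u,w}w$. The naive bound $\|z_{\bar u,w}\|_{L^\infty(Q)}\le C\|w\|^{1/r}_{L^1(Q)}$ from Lemma \ref{vandw} is only sublinear in $\|w\|_{L^1(Q)}$ and does not suffice; instead I would pair the linear estimate $\|z_{\bar u,w}\|_{L^{s_n}(Q)}\le C\|w\|_{L^1(Q)}$ of Lemma \ref{estLs} ($s_n>1$) with the $L^\infty$-smallness of the conjugate factor from Lemma \ref{vandw}. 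For instance $\int_Q m\,z_{\bar u,w}v\dx\dt\le\vert m\vert\,\|z_{\bar u,w}\|_{L^{s_n}(Q)}\|v\|_{L^{s_n'}(Q)}\le C\|w\|_{L^1(Q)}\|v\|^{1/s_n'}_{L^1(Q)}=o(\|w\|_{L^1(Q)})$, while the factors multiplying $w$ are handled by $\|z_{\bar u,v}\|_{L^\infty(Q)}\to0$. Hence for $\varepsilon$ small $J'(\bar u)w+2J''(\bar u)(v,w)+J''(\bar u)w^2\ge\frac{\tau}{2}\|w\|_{L^1(Q)}\ge0$, and combining with $(\bar B_k)$ yields $J'(\bar u)(u-\bar u)+J''(\bar u)(u-\bar u)^2\ge\gamma_k\|z_{\bar u,v}\|^{k}_{L^2(Q)}\|v\|^{2-k}_{L^1(Q)}+\frac{\tau}{2}\|w\|_{L^1(Q)}$.

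Finally I would pass from $v$ back to $u-\bar u$. For $k=0$ this uses $\|u-\bar u\|^2_{L^1(Q)}\le2\|v\|^2_{L^1(Q)}+2\varepsilon\|w\|_{L^1(Q)}$, and for $k=2$ it uses $\|z_{\bar u,u-\bar u}\|^2_{L^2(Q)}\le2\|z_{\bar u,v}\|^2_{L^2(Q)}+2\|z_{\bar u,w}\|^2_{L^2(Q)}$ together with $\|z_{\bar u,w}\|^2_{L^2(Q)}\le\|z_{\bar u,w}\|_{L^{s_n}(Q)}\|z_{\bar u,w}\|_{L^{s_n'}(Q)}=o(\|w\|_{L^1(Q)})$; in either case a final constant $\gamma_k'\le\gamma_k/2$ lets the $v$-part be dominated by the main term and absorbs the $\|w\|_{L^1(Q)}$-remainder into $\frac{\tau}{2}\|w\|_{L^1(Q)}$. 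The main obstacle is precisely this uniform control of the surplus $m$-cross terms, and the need to keep the right hand side homogeneous (pure $\|u-\bar u\|^2_{L^1(Q)}$ or pure $\|z_{\bar u,u-\bar u}\|^2_{L^2(Q)}$) is what restricts the argument to $k\in\{0,2\}$: for $k=1$ the mixed product $\|z_{\bar u,u-\bar u}\|_{L^2(Q)}\|u-\bar u\|_{L^1(Q)}$ does not split into pieces matchable against the available positive quantities.
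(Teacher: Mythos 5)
Your proposal is correct and follows essentially the same route as the paper: the splitting $u-\bar u=v+w$ from Lemma \ref{vandw}, the application of $(\bar B_k)$ to the $D^\tau_{\bar u}$-part $v$, the pointwise lower bound $J'(\bar u)w\ge\tau\|w\|_{L^1(Q)}$, absorption of the surplus second-order terms using the $L^\infty$-smallness of the linearized states, and the final recombination via $\|w\|_{L^1(Q)}\gtrsim\|w\|^2_{L^1(Q)}$ resp.\ $\|z_{\bar u,w}\|^2_{L^2(Q)}$. Your more explicit treatment of the $m$-cross-terms via the $L^{s_n}$-estimate of Lemma \ref{estLs} is a slightly more careful version of the paper's blanket bound \eqref{secvest}, but it is not a different argument.
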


\begin{proof}
Let $k\in \{0,2\}$. If ($B_k$) holds then ($\bar B_{k}$) is obviously also fulfilled.
Now let ($\bar B_{k}$) hold.  
The numbers $\tilde  \alpha_k$ and $\tilde\gamma_k>0$ will be chosen later so that  assumption $(B_k)$ 
will hold with these numbers. 
For now we only require that $0 < \tilde \alpha_k < \alpha_k$.
Choose an arbitrary $u \in \Uad$ with $\|u-\bar u \| _{L^{1}(Q)} < \tilde \alpha_k$.
We only need to prove \reff{E3.13.1} in the case $u-\bar u\notin D^{\tau}_{\bar u}$. 
Take $v$ and $w$ as defined in Lemma \ref{vandw}. Clearly by definition $v\in D^\tau_{\bar u}$.
As a direct consequence of \eqref{E3.5.a}-\eqref{E3.5} and Assumption \ref{exist.1} and \ref{exist.2}
there exists a constant $C_0>0$ such that 
\begin{align}
\vert  J''(\bar u)(w)^2\vert  &\leq C_0 \|z_{\bar u,w}\|   _{L^\infty(Q)} \|w\|   _{L^1(Q)},\\
\vert  J''(\bar u)(w,v)\vert  &\leq C_0 \|z_{\bar u, v}\|_{L^\infty(Q)} \|w\|   _{L^1(Q)}.
\label{secvest}
\end{align}
We estimate 
\begin{equation} \label{secest}
      \Big\vert J''(\bar u)(w)^2+2J''(\bar u)(w,v) \Big \vert 
      \leq 3C_0(\|z_{\bar u, w}\| _{L^\infty(Q)}+\|z_{\bar u, v}\| _{L^\infty(Q)})\| w \|_{L^1(Q)}
\end{equation}
Since $\tilde \alpha_k < \alpha_k$ and $v\in D^\tau_{\bar u}$ we may apply \reff{EbBk} with $v$ instead of $u-\bar u$.
Using also \eqref{secest}, we estimate 
\begin{align*}
&J'(\bar u)(u - \bar u) + J''(\bar u)(u - \bar u)^2=J'(\bar u)(v+w) + J''(\bar u)(v+w)^2\\
&\geq J'(\bar u)(v)+J'(\bar u)(w)+ J''(\bar u)(v)^2+J''(\bar u)(w)^2+2J''(\bar u)(w,v)\geq \gamma_k \|z_{\bar u, v} \|_{L^2(Q)}^{k} \| v\| _{L^1(Q)}^{2-k}+\tau \|w\|_{L^1(Q)}\\
&-3C_0(\|z_{\bar u, w}\| _{L^\infty(Q)}+\|z_{\bar u, v}\| _{L^\infty(Q)})\| w \|_{L^1(Q)} \geq \gamma_k \|z_{\bar u, v} \|_{L^2(Q)}^{k} \|v\|_{L^1(Q)}^{2-k}+\frac{\tau}{2} \|w\|_{L^1(Q)}.
\end{align*}
In the last inequality we use that by choosing $\tilde \alpha_k>0$ sufficiently small we may ensure that
\begin{align*}
&\tau-3C_0(\|z_{\bar u, w}\| _{L^\infty(Q)}+\|z_{\bar u, v}\| _{L^\infty(Q)})\geq \tau -3C_0C\max\{ \tilde \alpha,\tilde \alpha^{\frac{1}{r}}\}\geq \frac{\tau}{2}.
\end{align*}
This is implied by the inequalities
$\|z_{\bar u,w}\| _{L^\infty}$, $\|z_{\bar u,v}\| _{L^\infty(Q)}\le C_r\tilde \alpha_k^{\frac{1}{r}}$
resulting from Lemma \ref{vandw}.
Further, we find
\begin{align*}
\| w\|_{L^1(Q)}\ge \left\{\begin{array}{l} \frac{1}{2M_{\mathcal U}}\| w \| _{L^1(Q)}^2\\
 \frac{1}{C_r(2M_{\mathcal U})^{(1/r)}}\|z_{\bar u, w}\| _{L^2(Q)}^2,
\end{array}\right.
\end{align*}
where we used that $\|u-\bar u\|_{L^1(Q)}<2M_{\mathcal U}$ for all $u\in \mathcal U$ and
\[
\|z_{\bar u, w}\| _{L^2(Q)}^2\leq \|z_{\bar u, w} \|_{L^1(Q)} \|z_{\bar u, w}\|_{L^{\infty}(Q)} 
\leq  \| w\|_{L^1(Q)} C_r (2M_{\mathcal U})^{1/r}.
\]
For $k=0$:
\begin{align*}
J'(\bar u)(u - \bar u) + J''(\bar u)(u - \bar u)^2\geq \gamma_0 \|v\|   _{L^1(Q)}^{2}+\frac{\tau}{2M_{\mathcal U}} \| w\|_{L^1(Q)}^2&\geq \min\Big\{\gamma_0, \frac{\tau}{2M_{\mathcal U}}\Big\}(\|v\|_{L^1(Q)}^{2}+\|w\|_{L^1(Q)}^2)\\
&\geq \frac{2}{3}\min\Big\{\gamma_0,\frac{\tau}{2M_{\mathcal U}}\Big\}(\| u-\bar u\|_{L^1(Q)}^{2}.
\end{align*}
For $k=2$:
\begin{align*}
&J'(\bar u)(u - \bar u)+ J''(\bar u)(u - \bar u)^2
\geq \gamma_2 \|   z_{\bar u, v} \|_{L^2(Q)}^{2}+\frac{\tau}{2} \|w\|_{L^1(Q)}\\
&\geq \min\Big\{\gamma_2, \frac{\tau}{2C_r(2M_{\mathcal U})^{(1/r)}}\Big\}( \| z_{\bar u, v}\|_{L^2(Q)}^{2}
+\|z_{\bar u, w}\|   _{L^2(Q)}^2)\geq \frac{2}{3}\min\Big\{\gamma_2, \frac{\tau}{2C_r(2M_{\mathcal U})^{(1/r)}}\Big\}\|z_{\bar u, u-\bar u}\|_{L^2(Q)}^2.
\end{align*}
This proves that  \reff{E3.13.1} is satisfied with an appropriate number $\tilde\gamma_k$.
\end{proof}

If the control does not appear explicitly in the objective functional, we obtain a stronger result.

\begin{Corollary}
Let $m,g=0$. Then Assumption \ref{A4}$(A_2)$ is equivalent to the following condition ($\bar A_{2}$):
there exist constants $\alpha_2,\gamma_2,\tau>0$ such that
\begin{equation} \label{EbA2}
    J'(\bar u)(u - \bar u) + J''(\bar u)(u - \bar u)^2 \ge \gamma_2\|   z_{\bar u,u - \bar u}\|^{2}_{L^2(Q)}
\end{equation}
for all $u \in \Uad$ for which $(u-\bar u )\in C^{\tau}_{\bar u}$ and $\| y_u-\bar y\|_{L^\infty(Q)}<\alpha_2$.
\end{Corollary}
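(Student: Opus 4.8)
The two conditions differ only in the range of quantification: $(\bar A_2)$ is \reff{E3.13.1} (with $k=2$) restricted to those $u\in\Uad$ with $(u-\bar u)\in C^\tau_{\bar u}=D^\tau_{\bar u}\cap G^\tau_{\bar u}$, under the same closeness $\|y_u-\bar y\|_{L^\infty(Q)}<\alpha_2$ (note $\|\cdot\|_{C(\bar Q)}=\|\cdot\|_{L^\infty(Q)}$ on $C(\bar Q)$). Hence the implication $(A_2)\Rightarrow(\bar A_2)$ is immediate for any fixed $\tau>0$, and the whole content is the converse. I assume throughout, as is implicit in the definition of the cones, that $\bar u$ satisfies \reff{E3.7}--\reff{E3.9}, and I use that $m=g=0$ gives $\frac{\partial\bar H}{\partial u}=\bar p$ and $J'(\bar u)(v)=\int_Q\bar p\,v\dx\dt$, and --- decisively --- that \reff{E3.5.a}--\reff{E3.5} collapses to the pure state term, so that $|J''(\bar u)h^2|\le C_0\|z_{\bar u,h}\|_{L^2(Q)}^2$ with $C_0$ uniform on the bounded set of Remark \ref{rebound}.

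The engine of the converse is a \emph{first order dominance} estimate: if $h$ satisfies the sign condition \reff{sign}, lies outside $G^\tau_{\bar u}$ (so $J'(\bar u)(h)>\tau\|z_{\bar u,h}\|_{L^1(Q)}$) and has $\|z_{\bar u,h}\|_{L^\infty(Q)}$ small, then, using $\|z_{\bar u,h}\|_{L^2(Q)}^2\le\|z_{\bar u,h}\|_{L^\infty(Q)}\|z_{\bar u,h}\|_{L^1(Q)}$,
\begin{equation*}
J'(\bar u)(h)+J''(\bar u)h^2\ \ge\ \tau\|z_{\bar u,h}\|_{L^1(Q)}-C_0\|z_{\bar u,h}\|_{L^2(Q)}^2\ \ge\ \Big(\tfrac{\tau}{\|z_{\bar u,h}\|_{L^\infty(Q)}}-C_0\Big)\|z_{\bar u,h}\|_{L^2(Q)}^2 ,
\end{equation*}
whose coefficient exceeds the constant $\gamma_2$ of $(\bar A_2)$ once $\|z_{\bar u,h}\|_{L^\infty(Q)}$ is below a threshold. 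Combining this with $(\bar A_2)$ yields the key intermediate fact: there are $\gamma_2,\eta>0$ such that every $v$ obeying \reff{sign} with $v\in D^\tau_{\bar u}$ and $\|z_{\bar u,v}\|_{L^\infty(Q)}<\eta$ satisfies $J'(\bar u)(v)+J''(\bar u)v^2\ge\gamma_2\|z_{\bar u,v}\|_{L^2(Q)}^2$. Indeed, if $v\in G^\tau_{\bar u}$ then $v\in C^\tau_{\bar u}$ and $(\bar A_2)$ applies to $\tilde u:=\bar u+v\in\Uad$ (whose state is close by \reff{E2.14.2} and a short continuity/bootstrap argument along $\bar u+sv$, $s\in[0,1]$); if $v\notin G^\tau_{\bar u}$, the dominance estimate applies with $h=v$.

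Now take $u\in\Uad$ with $\|y_u-\bar y\|_{L^\infty(Q)}<\alpha_2$; by \reff{E2.14.2}, $\|z_{\bar u,u-\bar u}\|_{L^\infty(Q)}$ is small too. If $u-\bar u\notin G^\tau_{\bar u}$, the dominance estimate with $h=u-\bar u$ gives \reff{EbA2} directly. If $u-\bar u\in G^\tau_{\bar u}$, I split $u-\bar u=v+w$ as in Lemma \ref{vandw}, so that $v\in D^\tau_{\bar u}$, $\tfrac{\partial\bar H}{\partial u}\,w\ge\tau|w|$ a.e.\ (hence $J'(\bar u)(w)\ge\tau\|w\|_{L^1(Q)}$), and $\|z_{\bar u,v}\|_{L^\infty(Q)},\|z_{\bar u,w}\|_{L^\infty(Q)}$ are small. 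From
\begin{equation*}
J'(\bar u)(u-\bar u)+J''(\bar u)(u-\bar u)^2=\big[J'(\bar u)(v)+J''(\bar u)v^2\big]+J'(\bar u)(w)+\big[J''(\bar u)w^2+2J''(\bar u)(v,w)\big]
\end{equation*}
I bound the first bracket by $\gamma_2\|z_{\bar u,v}\|_{L^2(Q)}^2$ (the intermediate fact), keep $J'(\bar u)(w)\ge\tau\|w\|_{L^1(Q)}$, and absorb the last bracket into $\tfrac{\tau}{2}\|w\|_{L^1(Q)}$ using \reff{secest} and the $L^\infty$-smallness from Lemma \ref{vandw}. The closing manipulation, $\tfrac{\tau}{2}\|w\|_{L^1(Q)}\ge c\|z_{\bar u,w}\|_{L^2(Q)}^2$ together with $\|z_{\bar u,u-\bar u}\|_{L^2(Q)}^2\le 2\|z_{\bar u,v}\|_{L^2(Q)}^2+2\|z_{\bar u,w}\|_{L^2(Q)}^2$, is exactly as at the end of the proof of Corollary \ref{equi1} and gives \reff{EbA2} with a single positive constant.

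The main obstacle is the $v$-part above: on $C^\tau_{\bar u}$ the defining inequality of $G^\tau_{\bar u}$ makes the first order term too small to control the second, so the $D^\tau_{\bar u}$-argument of Corollary \ref{equi1} cannot be reused verbatim. The remedy is the dichotomy --- hypothesis $(\bar A_2)$ on $D^\tau_{\bar u}\cap G^\tau_{\bar u}$ versus first order dominance off $G^\tau_{\bar u}$ --- which works only because $m=g=0$ reduces $J''$ to the state term with $|J''(\bar u)v^2|\le C_0\|z_{\bar u,v}\|_{L^2(Q)}^2$, and because the second ($m=g=0$) part of Lemma \ref{vandw} supplies the $L^\infty$-smallness of $z_{\bar u,v}$ that converts $L^1$-coercivity of the first order term into $L^2$-coercivity.
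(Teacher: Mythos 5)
Your proof is correct and follows essentially the paper's own route: the direction $(A_2)\Rightarrow(\bar A_2)$ is immediate, and the converse is handled by the same dichotomy --- $u-\bar u\notin G^\tau_{\bar u}$ treated by first-order dominance via $\|z_{\bar u,h}\|^2_{L^2(Q)}\le\|z_{\bar u,h}\|_{L^\infty(Q)}\|z_{\bar u,h}\|_{L^1(Q)}$, and otherwise the splitting $u-\bar u=v+w$ of Lemma \ref{vandw} with the cross terms absorbed into $\tfrac{\tau}{2}\|w\|_{L^1(Q)}$ exactly as in Corollary \ref{equi1}. Your write-up is in fact slightly more complete than the paper's at two points: you retain and absorb the $J''$ term in the dominance case using the uniform bound $|J''(\bar u)h^2|\le C_0\|z_{\bar u,h}\|^2_{L^2(Q)}$ (the paper silently drops it), and your ``intermediate fact'' explicitly covers the sub-case $v\notin G^\tau_{\bar u}$ together with the verification that $\bar u+v\in\Uad$ has state close to $\bar y$, which the paper's bare appeal to Corollary \ref{equi1} does not literally justify since $(\bar A_2)$ is assumed only on $C^\tau_{\bar u}=D^\tau_{\bar u}\cap G^\tau_{\bar u}$ rather than on all of $D^\tau_{\bar u}$.
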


\begin{proof}
It is obvious that $(A_2)$ implies $(\bar A_{2})$. For the reverse, if $u-\bar u \in  C^{\tau}_{\bar u}$ the estimate holds trivially. 
We need to consider the cases $u-\bar u\notin G^{\tau}_{\bar u}$ and 
$u-\bar u\notin D^{\tau}_{\bar u}\text{ with }u-\bar u\in G^{\tau}_{\bar u}$. For the first, we argue as follows. 
Since $u-\bar u\notin G^{\tau}_{\bar u}$  it holds
\begin{align*}
J'(\bar u)(u-\bar u)+J''(\bar u)(u-\bar u)&
> \tau \|z_{\bar u,u-\bar u}\|_{L^1(Q)}\ge \frac{\tau}{2C_r\vert Q\vert^{\frac{1}{r}}M_{\mathcal U}}\|z_{\bar u,u-\bar u} \|_{L^2(Q)}^2.
\end{align*}
For the second case $u-\bar u\in G^{\tau}_{\bar u}$ and $u-\bar u\notin D^{\tau}_{\bar u}$, let $\tilde \alpha>0$ 
be smaller than $\alpha_2$, so that \reff{EbA2} and the prerequisite of Lemma \ref{vandw} is satisfied. 
We define $w, v$ as in Lemma \ref{vandw}. By the choice of $\alpha_2$, Lemma \ref{vandw} gives 
the existence of a constant $C>0$ such that $\|z_{\bar u,u-\bar u}\|_{L^\infty}<\alpha_2$ implies 
\begin{equation*}
\max\{ \|z_{\bar u,w}\|_{L^{\infty}(Q)}, \ \|z_{\bar u,v}\|_{L^{\infty}(Q)}\}<C\max\{ \alpha_2,\alpha_2^{\frac{1}{r}}\}.
\end{equation*}
Now we can proceed by the same arguments as in Corollary \ref{equi1}
\begin{align*}
J'(\bar u)(u - \bar u) + J''(\bar u)(u - \bar u)^2&=J'(\bar u)(v+w) + J''(\bar u)(v+w)^2 \geq \gamma_2 \|   z_{\bar u, v} \|_{L^2(Q)}^{2}+\frac{\tau}{2} \| w\|   _{L^1(Q)}.
\end{align*}

Finally, we use the estimate 
\[
\|z_{\bar u, w}\| _{L^2(Q)}^2\leq \|z_{\bar u, w} \|_{L^1(Q)} \|z_{\bar u, w}\|_{L^{\infty}(Q)} 
\leq  \|w \|_{L^1(Q)}C_r(2M_{\mathcal U})^{(1/r)}
\]
to find
\begin{align*}
J'(\bar u)(u - \bar u) &+ J''(\bar u)(u - \bar u)^2
\geq \gamma_2 \|z_{\bar u, v} \|_{L^2(Q)}^{2}+\frac{\tau}{2C_r(2M_{\mathcal U})^{(1/r)}} \|w\|   _{L^1(Q)}\\
&\geq \min\Big \{\gamma_2, \frac{\tau}{2C_r(2M_{\mathcal U})^{(1/r)}}\Big\}( \|z_{\bar u, v} \|_{L^2(Q)}^{2}
+\|    z_{\bar u, w}\|_{L^2(Q)}^2)\\
&\geq \min\Big \{\gamma_2, \frac{\tau}{2C_r(2M_{\mathcal U})^{(1/r)}}\Big\}( \|z_{\bar u, u-\bar u} \|_{L^2(Q)}^{2},
\end{align*}
for all $(u-\bar u )\in C^{\tau}_{\bar u}$ with $\| y_u-\bar y\|_{L^\infty(Q)}<\alpha_2$.
\end{proof}

\section{Strong metric H\"older subregularity and auxiliary results} \label{SSR}

We study the strong metric H\"older subregularity property (SMHSr) of the optimality map. This is an extension 
of the strong metric subregularity property (see, \cite[Section 3I]{DR} or \cite[Section 4]{CDK}) dealing with Lipschitz
stability of set-valued mappings. The SMHSr property is especially relevant to the parabolic setting where 
Lipschitz stability may fail.

\subsection{The optimality mapping}

We begin by defining some operators used to represent the optimality map in a more convenient way. 
This is done analogously to 
\cite[Section 2.1]{DJV2022}.
Given the initial data $y_0$ in \eqref{see1}, we define the set
\[
D(\mathcal L):=\Big\{ y\in W(0,T)\cap L^{\infty}(Q)\Big \vert \ \Big(\frac{d}{dt}+\mathcal A\Big)y \in L^{r}(Q), y(\cdot,0)=y_0 \Big\}.
\]
To shorten notation, we define $\mathcal L:D(\mathcal L)\to L^{r}(Q)$ by $\mathcal L:=\frac{d}{dt}+\mathcal A$.
Additionally, we define the operator 
$\mathcal L^*:D(\mathcal L^*)\to L^{r}(Q)$ by $\mathcal L^*:=(-\frac{d}{dt}+\mathcal A^*)$, where
\[
D(\mathcal L^*):=\Big\{ p\in W(0,T)\cap L^{\infty}(Q)\Big \vert  \Big(-\frac{d}{dt}+\mathcal A^*\Big)p \in L^{r}(Q), p(\cdot,T)=0 \Big\}.
\]

With the operators $\mathcal L$ and $\mathcal L^*$, we recast the semilinear state equation \reff{see1}
and the linear adjoint equation \reff{E3.8} in a short way: 
\[
\mathcal L y=u-f(\cdot,y)
\]
\[
\mathcal L^* p=L_y(\cdot,y_u,u)-pf_y(\cdot,y_u)=\frac {\partial H}{\partial y}(\cdot,y_u,p,u).
\]

The normal cone to the set $\mathcal{U}$ at $u \in L^1(Q)$ is defined in the usual way:
\begin{equation*} 
      N_{\mathcal{U}}(u):= \left\{ \begin{array}{cl}
    \big\{\nu \in L^{\infty}(Q)\big\vert  \ \int_Q \nu (v-u)\dx\dt\le 0 \ \ \forall v \in \mathcal U \big\} & \mbox{ if } u \in \mathcal{U}, \\
           \emptyset  & \mbox{ if } u \not\in \mathcal{U}.
   \end{array}  \right. 
\end{equation*}

The first order necessary optimality condition for problem \eqref{ocp1}-\eqref{const} in Theorem \ref{pontryagin}
can be recast as 
\begin{align}\label{s6}
\left\{ \begin{array}{cll}
0&=&\mathcal L y+f(\cdot,y)-u,\\
0&=&\mathcal L^*p- \frac {\partial H}{\partial y}(\cdot,y,p,u),\\   
0&\in& H_{u}(\cdot,y,p) + N_{\mathcal U}(u).
\end{array} \right.
\end{align}
For (\ref{s6}) to make sense, a solution $(y,p,u)$ must satisfy $y\in D(\mathcal L)$, $p\in D(\mathcal L^*)$ and 
$u\in\mathcal U$. 
For a local solution $\bar u\in\mathcal U$ of problem (\ref{ocp1})-(\ref{const}), by Theorem \ref{pontryagin}, 
the triple $(y_{\bar u},p_{\bar u},\bar u)$ is a solution of (\ref{s6}).
We define the sets
\begin{align} \label{EYZ}
	\mathcal Y:=D(\mathcal L)\times D(\mathcal L^*)\times\mathcal U\quad\text{and}\quad \mathcal Z:=
L^2(\Omega)\times L^2(\Omega)\times L^\infty(\Omega),
\end{align}
and consider the set-valued mapping $\Phi:\mathcal Y\twoheadrightarrow\mathcal Z$ given by
\begin{align}\label{optmapping}
\Phi\left( \begin{array}{c}
y \\
p \\
u
\end{array} \right) :=\left( \begin{array}{c}
\mathcal Ly + f(\cdot,y)-u \\
\mathcal L^*p- \frac {\partial H}{\partial y}(\cdot,y,p,u) \\
\frac {\partial H}{\partial u}(\cdot,y,p,u)+ N_{\mathcal U}(u)
\end{array} \right).
\end{align}
With the abbreviation $\psi := (y,p,u)$, the system (\ref{s6}) can be rewritten as the inclusion $0\in\Phi(\psi)$.
Our goal is to study the stability of system (\ref{s6}), or equivalently, the stability of the solutions of the inclusion 
$0\in\Phi(\psi)$ under perturbations. 
For elements $\xi,\eta\in L^r(\Omega)$ and $\rho\in L^\infty(\Omega)$ we consider the perturbed system
\begin{align}\label{s1per}
\left\{ \begin{array}{cll}
\xi&=&-\mathcal Ly+f(\cdot,y) - u,\\
\eta&=&-\mathcal Lp+\frac {\partial H}{\partial y}(\cdot,y,p,u),\\
\rho&\in&\frac {\partial H}{\partial u}(\cdot,y,p) + N_{\mathcal U}(u),
\end{array} \right.
\end{align}
which is equivalent to the inclusion $\zeta := (\xi,\eta,\rho)\in \Phi(\psi)$.

\begin{Definition}
The mapping $\Phi:\mathcal Y\twoheadrightarrow\mathcal Z$ is called the {\em optimality mapping} of the optimal 
control problem (\ref{ocp1})-(\ref{const}).
\end{Definition}

\begin{Theorem}
For any perturbation $\zeta:=(\xi,\eta,\rho)\in L^r(Q)\times L^r(Q)\times L^\infty(Q)$ there exists 
a triple $\psi := (y,p,u)\in \mathcal Y$ such that $\zeta\in \Phi(\psi)$.
\end{Theorem}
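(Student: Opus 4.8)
The plan is to realize the perturbed system as the first order optimality system of an auxiliary optimal control problem of the same affine type as \Pb, obtained by absorbing $\zeta=(\xi,\eta,\rho)$ into the state equation and the objective, and to produce $\psi$ as a global minimizer of that problem. Concretely, I would introduce the perturbed state equation $\mathcal L y+f(\cdot,y)=u+\xi$, $y(\cdot,0)=y_0$, and minimize over $u\in\mathcal U$ the functional
\[
   J_\zeta(u):=\int_Q\big[L_0(x,t,y)+(my+g)u+\eta\,y-\rho\,u\big]\dx\dt,
\]
with $y$ the solution of the perturbed equation for the control $u$. Since $u\in\mathcal U\subset L^\infty(Q)$ and $\xi\in L^r(Q)$, the input $u+\xi$ belongs to $L^r(Q)$, so Theorem~\ref{estsemeq} gives a unique solution $y=y_u^\xi\in W(0,T)\cap L^\infty(Q)$ that is bounded uniformly over $u\in\mathcal U$.

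First I would establish existence of a global minimizer by the direct method, exactly as for \Pb. The set $\mathcal U$ is bounded, convex and weakly-$*$ closed in $L^\infty(Q)$, hence weakly-$*$ sequentially compact; along a minimizing sequence $u_k\rightharpoonup^*\bar u$, and thus $u_k+\xi\rightharpoonup\bar u+\xi$ weakly in $L^r(Q)$, the states converge strongly, $y_{u_k}^\xi\to y_{\bar u}^\xi$ in $L^\infty(Q)$, by \eqref{semilinweak}. Consequently the nonconvex term $\int_Q L_0(\cdot,y_{u_k}^\xi)$ and the linear term $\int_Q\eta\,y_{u_k}^\xi$ pass to the limit by strong convergence of the states, while the control terms pass to the limit through the strong--weak pairing and $\rho\in L^\infty(Q)\subset L^{r'}(Q)$. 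This yields weak-$*$ lower semicontinuity, and the uniform state bound shows $J_\zeta$ is bounded below, so a minimizer $\bar u$ exists. I note in passing that the extra term $\eta\,y$ need not satisfy the smoothness demanded of $L_0$ in Assumption~\ref{exist.2}, but being linear in $y$ it is controlled by the continuity just used and, below, by the linear theory of Theorem~\ref{mainex}.

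Next I would write the first order conditions for $\bar u$. Let $p\in W(0,T)\cap C(\bar Q)$ solve the linear adjoint equation $\mathcal L^* p+\frac{\partial f}{\partial y}(\cdot,y)\,p=\frac{\partial L}{\partial y}(\cdot,y,\bar u)+\eta$, $p(\cdot,T)=0$, whose right-hand side lies in $L^r(Q)$ and whose zero-order coefficient is nonnegative, so that Theorem~\ref{mainex} applies. Differentiating $J_\zeta$ as in Theorem~\ref{T3.1} and using the adjoint representation, the derivative reduces to $J_\zeta'(\bar u)v=\int_Q\big(p+my+g-\rho\big)v\dx\dt$, so the variational inequality $J_\zeta'(\bar u)(v-\bar u)\ge0$ for all $v\in\mathcal U$ is precisely $\rho\in\frac{\partial H}{\partial u}(\cdot,y,p,\bar u)+N_{\mathcal U}(\bar u)$; rewriting the adjoint equation as $\mathcal L^* p-\frac{\partial H}{\partial y}(\cdot,y,p,\bar u)=\eta$ shows that $(y,p,\bar u)$ satisfies exactly $\zeta\in\Phi(y,p,\bar u)$. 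Finally $\psi:=(y,p,\bar u)\in\mathcal Y$ is routine: $\bar u\in\mathcal U$ by construction, $\mathcal L y=u+\xi-f(\cdot,y)\in L^r(Q)$ by boundedness of $y$ and Assumption~\ref{exist.2} so $y\in D(\mathcal L)$, and $\mathcal L^* p\in L^r(Q)$ so $p\in D(\mathcal L^*)$.

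The step I expect to be the main obstacle is the existence argument, namely checking that the direct method survives the perturbation. The decisive observation is that $\zeta$ does not disturb the affine structure of \Pb: $\xi$ enters the state equation only through the $L^r$ input $u+\xi$, while $\eta\,y$ and $\rho\,u$ are affine terms that preserve both boundedness below and weak-$*$ lower semicontinuity. Once existence is in hand, the translation of the optimality system into $\zeta\in\Phi(\psi)$ and the regularity bookkeeping are direct adaptations of Theorems~\ref{estsemeq}, \ref{T3.1} and \ref{pontryagin} together with the linear estimates of Theorem~\ref{mainex}.
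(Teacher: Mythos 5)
Your proposal is correct and follows essentially the same route as the paper: the paper also realizes $\zeta$ as the optimality system of the auxiliary problem $\min_{u\in\mathcal U}\{J(u)+\int_Q\eta y\,\mathrm{d}x\,\mathrm{d}t-\int_Q\rho u\,\mathrm{d}x\,\mathrm{d}t\}$ subject to $\mathcal Ly+f(\cdot,y)=u+\xi$, invoking existence of a global minimizer ``by standard arguments'' and reading off the perturbed first-order system. You have merely filled in those standard arguments (direct method, weak-$*$ compactness, state convergence, and the adjoint/variational-inequality bookkeeping), all of which are carried out correctly.
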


\begin{proof}
We consider the optimal control problem 
\[
\min_{u \in \mathcal U} \Big\{\mathcal J (u)+\int_Q \eta y \text{ dxdt}-\int_Q \rho u  \text{ dxdt}\Big\},
\]
subject to
\begin{align*}
\left\{\begin{array}{l}
\mathcal L y+f(x,t,y) = u + \xi\  \text{ in }\ Q,\\
y=0 \ \text{ on }\ \Sigma,\ y(\cdot,0)=y_0 \text{ in } \Omega.
\end{array} \right.
\end{align*}
Under assumptions \ref{exist.1} and \ref{exist.2}, we have by standard arguments the existence of
a global solution $\tilde u$. Then $\tilde u$ and the corresponding state $y_{\tilde u}$ and adjoint 
state $p_{\tilde u}$ satisfy \eqref{s1per}.
\end{proof}

The following extension of the previous theorem can be proved along the lines of \cite[Theorem 4.12]{DJV2022}.

\begin{Theorem}
Let Assumption \ref{A4}$(A_0)$ hold. For each $\varepsilon > 0$ there exists $\delta > 0$ 
such that for every $\zeta \in B_{Z} (0;\delta)$
there exists $\psi \in B_{Y}(\bar \psi; \varepsilon)$ satisfying the inclusion $\zeta \in \Phi(\psi)$.
\end{Theorem}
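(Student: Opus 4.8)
The plan is to localize the global construction of the preceding theorem and to exploit the $L^1$-quadratic growth that Assumption \ref{A4}$(A_0)$ forces on $J$, so as to push the localized minimizer strictly inside a prescribed neighborhood of $\bar\psi$, whence it solves the \emph{unconstrained} perturbed system \eqref{s1per}. Since the statement concerns the general objective \eqref{genobj}, I would first pass from $(A_0)$ to $(B_0)$ via Proposition \ref{equivasu}; Theorem \ref{T3.3} then furnishes constants $\varepsilon_0,\kappa_0>0$ with
\[
J(\bar u)+\tfrac{\kappa_0}{2}\|u-\bar u\|_{L^1(Q)}^2\le J(u)\qquad\text{whenever } u\in\mathcal U,\ \|u-\bar u\|_{L^1(Q)}<\varepsilon_0 .
\]
Next I reduce $\mathcal Y$-closeness to control-closeness: by the data-independent stability estimates of Theorem \ref{mainex}, Theorem \ref{estsemeq} and Lemma \ref{sprep}, smallness of $\|u-\bar u\|_{L^1(Q)}$ and of $\|\xi\|_{L^r(Q)}$ forces the associated state and adjoint close to $\bar y,\bar p$ in the norms of $D(\mathcal L),D(\mathcal L^*)$, while $\|u-\bar u\|_{L^1(Q)}\to0$ gives control-closeness in the (weak) norm of the $\mathcal U$-component. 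Hence I fix $r_\varepsilon\in(0,\min\{\varepsilon,\varepsilon_0\})$ small enough that $\|u-\bar u\|_{L^1(Q)}\le r_\varepsilon$ together with $\|\xi\|_{L^r(Q)}\le\delta$ yields $\psi\in B_Y(\bar\psi;\varepsilon)$.

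For $\zeta=(\xi,\eta,\rho)$ with $\|\zeta\|_Z\le\delta$, let $y_u^\xi$ solve $\mathcal L y+f(\cdot,y)=u+\xi$ and consider the localized perturbed problem
\[
\min\Big\{J_\zeta(u):=\int_Q L(x,t,y_u^\xi,u)\dx\dt+\int_Q\eta\,y_u^\xi\dx\dt-\int_Q\rho\,u\dx\dt \ :\ u\in\mathcal U,\ \|u-\bar u\|_{L^1(Q)}\le r_\varepsilon\Big\}.
\]
Its feasible set contains $\bar u$ and is weakly-$*$ sequentially closed in $L^\infty(Q)$ (the $L^1$-constraint being weakly lower semicontinuous), while $J_\zeta$ is weakly-$*$ continuous along admissible sequences by the weak-to-strong continuity \eqref{semilinweak} (the only $u$-dependence beyond the strongly convergent state is linear); hence, exactly as in the preceding theorem, the direct method yields a global minimizer $u_\zeta$, with state $y_{u_\zeta}^\xi$ and adjoint $p_{u_\zeta}$.

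The decisive step is to show the constraint is \emph{inactive} for small $\delta$. Writing $J_\zeta(u)=J(u)+(J_\zeta(u)-J(u))$, I bound the perturbation gap uniformly on the feasible set by $|J_\zeta(u)-J(u)|\le C\delta$: the term $\int_Q\rho\,u$ through $\|\rho\|_{L^\infty}$ and the uniform bound of Remark \ref{rebound}, the term $\int_Q\eta\,y_u^\xi$ through $\|\eta\|_{L^r}$ and the uniform state bound, and $\int_Q[L(\cdot,y_u^\xi,u)-L(\cdot,y_u,u)]$ through the local Lipschitz continuity of $L_0$ in $y$ and the difference estimate $\|y_u^\xi-y_u\|_{C(\bar Q)}\le C_r\|\xi\|_{L^r(Q)}\le C_r\delta$, which follows from the $\alpha$-independent linear bound of Theorem \ref{mainex} applied to $y_u^\xi-y_u$. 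Since $\bar u$ is feasible and $\|u_\zeta-\bar u\|_{L^1(Q)}\le r_\varepsilon<\varepsilon_0$, minimality combined with the growth above gives
\[
J(\bar u)+\tfrac{\kappa_0}{2}\|u_\zeta-\bar u\|_{L^1(Q)}^2\le J(u_\zeta)\le J_\zeta(u_\zeta)+C\delta\le J_\zeta(\bar u)+C\delta\le J(\bar u)+2C\delta,
\]
so that $\|u_\zeta-\bar u\|_{L^1(Q)}\le (4C\delta/\kappa_0)^{1/2}$; choosing $\delta$ small makes this strictly less than $r_\varepsilon$, i.e.\ the $L^1$-constraint is slack.

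With the localizing constraint inactive, $u_\zeta$ is a local minimizer of the perturbed problem \emph{without} that constraint, hence satisfies the corresponding first order necessary conditions, which are precisely \eqref{s1per}; thus $\psi_\zeta:=(y_{u_\zeta}^\xi,p_{u_\zeta},u_\zeta)$ obeys $\zeta\in\Phi(\psi_\zeta)$, and by the choice of $r_\varepsilon$ we have $\psi_\zeta\in B_Y(\bar\psi;\varepsilon)$. \textbf{The main obstacle} is the inactivity step: the perturbation gap $|J_\zeta(u)-J(u)|$ must be estimated only linearly in $\delta$ and uniformly over both the feasible set and $\zeta\in B_Z(0;\delta)$, so that the $L^1$-quadratic growth coming from $(A_0)$ can absorb it; this rests on the independence of the constants in Theorem \ref{mainex}, Theorem \ref{estsemeq} and Lemma \ref{sprep} of the perturbation data.
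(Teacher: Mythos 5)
Your argument is correct and is essentially the proof the paper intends: the paper gives no details here, deferring to the elliptic analogue \cite[Theorem 4.12]{DJV2022}, which uses exactly this scheme — localize the perturbed problem by an auxiliary $L^1$-ball around $\bar u$, get a minimizer by the direct method, and use the $L^1$-quadratic growth supplied by $(A_0)$ (via $(B_0)$ and Theorem \ref{T3.3}, the right move for general $m$) against a perturbation gap of order $\delta$ to show the auxiliary constraint is inactive, so that the unconstrained first-order conditions, i.e.\ $\zeta\in\Phi(\psi)$, hold. The one bookkeeping point to tighten is that $B_{\mathcal Z}(0;\delta)$ controls only $\|\xi\|_{L^2(Q)}$, not $\|\xi\|_{L^r(Q)}$, so in the gap estimate you should replace the bound $\|y_u^{\xi}-y_u\|_{C(\bar Q)}\le C_r\|\xi\|_{L^r(Q)}\le C_r\delta$ by the $L^2$-estimate \eqref{E4.3.2} and invoke an a priori $L^r$-bound on $\xi$ (as in Assumption \ref{Ape}) only to keep the perturbed states uniformly bounded in $L^\infty(Q)$, so that the local Lipschitz constants of $L_0$ and $f$ are applicable.
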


\subsection{Strong metric H\"older subregularity: main result}
\label{SmHsr}

This subsection contains one of the main results in this paper: estimates of the difference between the solutions of the perturbed 
system \reff{s1per} and a reference solution of the unperturbed one, \reff{s6}, by the size of the perturbations.
This will be done using the notion of {\em strong metric H\"older subregularity} introduced in the next paragraphs.
 
Given a metric space $(\mathcal X,d_{\mathcal X})$, we denote by $B_{\mathcal X}(c,\alpha)$ the closed ball of center 
$c\in\mathcal X$ and radius $\alpha>0$.
The spaces $\mathcal Y$ and $\mathcal Z$, introduced in \reff{EYZ}, are endowed with the metrics 
\begin{align} \label{Enzeta}
	d_{\mathcal Y}(\psi_1,\psi_2)&:=\|   y_1-y_2\|   _{L^2(Q)}+\|   p_1-p_2\|   _{L^2(Q)}+\|   u_1-u_2\|   _{L^1(Q)},\\
	\nonumber d_{\mathcal Z}(\zeta_1,\zeta_2)&:=\|   \xi_1-\xi_2\|   _{L^{2}(Q)}+\|   \eta_1-\eta_2\|   _{L^{2}(Q)}+
      \|   \rho_1-\rho_2\|   _{L^\infty(Q)},
\end{align}
where $\psi_i=(y_i,p_i,u_i)$ and $\zeta_i=(\xi_i,\eta_i,\rho_i)$, $i\in\{1,2\}$.
From now on, we denote $\bar\psi:=(y_{\bar u},p_{\bar u},\bar u)$ to simplify notation. 

\begin{Definition}\label{Dsmsr}
Let $\bar \psi$ satisfy $0 \in \Phi(\bar \psi)$.
We say that the optimality mapping $\Phi:\mathcal Y\twoheadrightarrow \mathcal Z$ is 
{\em strongly metrically H\"older subregularity} (SMHSr) at $(\bar\psi,0)$ 
with exponent $\theta>0$ if there exist positive numbers $\alpha_1,\alpha_2$ and $\kappa$ such that 
	\begin{align*}
	d_{\mathcal Y}(\psi,\bar\psi)\le\kappa d_{\mathcal Z}(\zeta,0)^\theta
	\end{align*}
	for all $\psi\in B_{\mathcal Y}(\bar \psi\text{; }\alpha_1)$ and $\zeta\in B_{\mathcal Z}(0\text{; }\alpha_2)$ 
satisfying $\zeta\in\Phi(\psi)$.
\end{Definition}

Notice that applying the definition with $\zeta = 0$ we obtain that $\bar \psi$ is the unique solution 
of the inclusion $0 \in \Phi(\psi)$ in $ B_{\mathcal Y}(\bar \psi\text{; }\alpha_1)$. In particular, $\bar u$ is a strict  
local minimizer for problem \eqref{ocp1}-\eqref{const}.

In the next assumption we introduce a restriction on the set of admissible perturbations, call it $\Gamma$, which is
valid for the  remaining part of this section.

\begin{Assumption}  \label{Ape}
For a fixed positive constant $C_{pe}$, the admissible perturbation $\zeta = (\xi,\eta,\rho) \in \Gamma \subset \mathcal Z$ satisfy the restriction 
\begin{equation} \label{pertbound}
         \|\xi \|_{L^r(Q)}\leq C_{pe} .
\end{equation}
\end{Assumption}

For any $u \in \mathcal U$ and $\zeta \in \Gamma$ we denote by $(y_u^\zeta, p_u^\zeta,u)$ a solution 
of the first two equations in \reff{s1per}.
Using \reff{semilin} in Theorem~\ref{estsemeq} we obtain the existence of a constant $K_y$ such that
\begin{equation} \label{E3.15}
        \|y_u^\zeta\| _{L^\infty(\bar{Q})}  \leq K_y \quad \forall u \in \mathcal U 
         \;\; \forall \zeta \in \Gamma.
\end{equation}
Then for every $u \in {\mathcal U}$, every admissible disturbance $\zeta$, and the corresponding solution $y$ of
the first equation in \reff{s1per} it holds that $(y_u^\zeta(x,t),u(x,t)) \in R := [- K_y , K_y] \times [u_a, u_b]$.

\begin{Remark} \label{Rlip}
We apply the local properties in  Assumption \ref{exist.2} to the interval $[-K_y, K_y]$, and denote further by $\bar C$ a constant
that majorates the bounds and the Lipschitz constants of $f$ and $L_0$ and their first and second derivatives with respect to $y \in [-K_y, K_y]$.
\end{Remark}

By increasing the constant $K_y$, if necessary, we may also estimate the adjoint state:
\begin{equation*}
        \|p_u^\zeta\| _{L^\infty(\bar{Q})}  \leq K_y (1 + \| \eta \|_{L^r}(Q)) \quad \forall u \in \mathcal U 
         \;\; \forall \zeta \in \Gamma.
\end{equation*}
This follows from Theorem \ref{mainex} with $\alpha = - \frac{\partial f}{\partial y}(x,t,y_u^\zeta)$ and with
$\frac{\partial L}{\partial y}(x,t,y_u^\zeta,u)$ at the place of $u$.

We need some technical lemmas before stating our main result.

\begin{Lemma} Let $u\in \mathcal U$ be given and $v, \eta\in L^r(Q)$, $\xi\in L^\infty(Q)$. Consider solutions 
$y_u, y_u^\xi, p_u$ and $p_u^\eta$ of the equations
\begin{align}\label{s1}
\left\{ \begin{array}{cll}
\mathcal Ly+f(\cdot,y)&=&u+\xi,\\
\mathcal Ly+f(\cdot,y)&=&u,
\end{array} \right. \
\left\{ \begin{array}{cll}
\mathcal L^*p-\frac {\partial H}{\partial y}(\cdot,y_u^\xi,p,u)&=&\eta,\\
\mathcal L^*p-\frac {\partial H}{\partial y}(\cdot,y_u,p,u)&=&0,
\end{array} \right.
\end{align}
and solutions $z_{\bar u,v}^\xi$, $z_{\bar u, v}$ of
\begin{align}\label{eqz}
\left\{ \begin{array}{cll}
\mathcal Lz+f_y(\cdot,y^\xi_u)z&=&v,\\
\mathcal Lz+f_y(\cdot,y_u)z&=&v.
\end{array} \right. \
\end{align}
There exists constants $\beta_i>0$, $i\in \{1,2\}$,
independent of $\zeta \in \Gamma$, 
such that the following inequalities hold
	\begin{align}
&\|y^\xi_u - y_u\| _{L^2(Q)} \le \hat C\|\xi\|_{L^2(Q)},\label{E4.3.2}\\
&\| z^\xi_{u,v} - z_{u,v}\|_{L^2(Q)} \le \beta_1
 \|\xi\|_{L^{r}(Q)}\|z_{u,v}\| _{L^2(Q)},\label{E4.4}\\
 &\| z^\xi_{u,v} - z_{u,v}\|_{L^s(Q)} \le \beta_1
 \|\xi\|_{L^{2}(Q)}\|z_{u,v}\| _{L^2(Q)},\label{E4.4b}\\
&\| p^\eta_u - p_u\|_{2}\le \beta_2(\|\xi\|_{L^2(Q)}+\| \eta\| _{L^2(Q)}),\label{E4.5.a2}
	\end{align}
where $\hat C$ is the constant given in \eqref{wl2} and $ s\in [1,\frac{n+2}{n})$.
	\label{T4.1}
\end{Lemma}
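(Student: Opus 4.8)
## Proof Plan

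The plan is to prove the four estimates in order, since the later ones depend on the stability of the state $y_u^\xi$ established first. All four are comparisons between a ``perturbed'' equation and an ``unperturbed'' one, so the natural strategy throughout is to write the equation satisfied by the \emph{difference} of the two solutions and then apply the linear estimates from Theorem \ref{mainex} and Lemma \ref{estLs}. The key structural observation is that the perturbed coefficient $f_y(\cdot,y_u^\xi)$ differs from $f_y(\cdot,y_u)$ by a term controlled through the local Lipschitz continuity of $f_y$ (Assumption \ref{exist.2}, quantified by the constant $\bar C$ in Remark \ref{Rlip}) times $\|y_u^\xi - y_u\|$, so everything cascades from the first inequality \reff{E4.3.2}.

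\textbf{Estimate \reff{E4.3.2}.} Subtracting the two state equations in \reff{s1}, the difference $\delta y := y_u^\xi - y_u$ solves $\mathcal L \delta y + f(\cdot,y_u^\xi) - f(\cdot,y_u) = \xi$. Writing $f(\cdot,y_u^\xi) - f(\cdot,y_u) = \alpha\, \delta y$ with $\alpha := \int_0^1 f_y(\cdot, y_u + s\,\delta y)\,\mathrm{d}s \ge 0$ (using monotonicity of $f$ in $y$), we see $\delta y$ solves a linear equation of the form \reff{lin.1} with nonnegative coefficient $\alpha$, right-hand side $\xi$, and zero initial data. Applying the $\alpha$-independent estimate \reff{wl2} from Theorem \ref{mainex} immediately gives $\|\delta y\|_{L^2(Q)} \le \hat C \|\xi\|_{L^2(Q)}$, with exactly the constant $\hat C$ claimed.

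\textbf{Estimates \reff{E4.4} and \reff{E4.4b}.} The difference $\delta z := z_{u,v}^\xi - z_{u,v}$ of the two equations in \reff{eqz} satisfies $\mathcal L \delta z + f_y(\cdot,y_u^\xi)\,\delta z = -\bigl(f_y(\cdot,y_u^\xi) - f_y(\cdot,y_u)\bigr) z_{u,v}$. The coefficient $f_y(\cdot,y_u^\xi)$ is again nonnegative, and the right-hand side is bounded pointwise by $\bar C\,|y_u^\xi - y_u|\,|z_{u,v}|$ using the local Lipschitz continuity of $f_y$. For \reff{E4.4}, I would estimate the right-hand side in a suitable $L^q(Q)$-norm, applying Hölder's inequality to split $|y_u^\xi - y_u|$ (controlled in $C(\bar Q)$ via \reff{clr}, hence by $C_r\|\xi\|_{L^r(Q)}$) from $\|z_{u,v}\|_{L^2(Q)}$, then invoke the linear estimate to pass back to $\|\delta z\|_{L^2(Q)}$; this yields the factor $\|\xi\|_{L^r(Q)}\|z_{u,v}\|_{L^2(Q)}$. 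For \reff{E4.4b}, where the measure of the perturbation is only $\|\xi\|_{L^2(Q)}$ but one accepts the weaker target norm $L^s(Q)$ with $s \in [1,\frac{n+2}{n})$, I would instead apply Lemma \ref{estLs}: that lemma bounds the $L^{s}(Q)$-norm of the solution of such a linear problem by the $L^1(Q)$-norm of its right-hand side, and $\| |y_u^\xi - y_u|\,z_{u,v}\|_{L^1(Q)} \le \|y_u^\xi - y_u\|_{L^2(Q)}\|z_{u,v}\|_{L^2(Q)} \le \hat C\,\|\xi\|_{L^2(Q)}\|z_{u,v}\|_{L^2(Q)}$ by Cauchy--Schwarz and \reff{E4.3.2}.

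\textbf{Estimate \reff{E4.5.a2}.} Finally, for the adjoint states I would subtract the two equations in \reff{s1}, so that $\delta p := p_u^\eta - p_u$ satisfies a backward linear equation $\mathcal L^* \delta p - f_y(\cdot,y_u^\xi)\,\delta p = \eta + \bigl(\frac{\partial H}{\partial y}(\cdot,y_u^\xi,p_u,u) - \frac{\partial H}{\partial y}(\cdot,y_u,p_u,u)\bigr)$; the bracketed discrepancy arises because the first equation uses the coefficient evaluated at $y_u^\xi$ while the second uses $y_u$. Expanding $\frac{\partial H}{\partial y}$ and using local Lipschitz continuity of $L_{0,y}$, $f_y$, and $f_{yy}$ (again through $\bar C$), this discrepancy is bounded in $L^2(Q)$ by a constant times $\|y_u^\xi - y_u\|_{L^2(Q)}$, hence by $\hat C\,\bar C\,\|\xi\|_{L^2(Q)}$ via \reff{E4.3.2}, where the uniform $L^\infty$-bounds on $p_u$ (Remark \ref{rebound}, together with the adjoint bound stated after Remark \ref{Rlip}) absorb the factor of $p_u$. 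After a change of variables $t \mapsto T-t$ turning \reff{adjlin} into a forward problem, the $\alpha$-independent estimate \reff{wl2} gives $\|\delta p\|_{L^2(Q)} \le \beta_2(\|\xi\|_{L^2(Q)} + \|\eta\|_{L^2(Q)})$, as claimed. The uniformity of all constants $\beta_i$ over $\zeta \in \Gamma$ follows because the only place $\zeta$ enters the constants is through the $L^\infty$-bound on $y_u^\xi$ and $p_u^\zeta$, which is uniform on $\Gamma$ by \reff{E3.15} and Assumption \ref{Ape}.

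\textbf{Main obstacle.} The step I expect to be most delicate is the adjoint estimate \reff{E4.5.a2}, because the difference equation for $\delta p$ carries \emph{two} distinct perturbation sources: the explicit additive term $\eta$, and the implicit change in the linear coefficient and source induced by replacing $y_u$ with $y_u^\xi$. Tracking the second source requires care to confirm that the discrepancy term $\frac{\partial H}{\partial y}(\cdot,y_u^\xi,p_u,u) - \frac{\partial H}{\partial y}(\cdot,y_u,p_u,u)$ genuinely closes back onto $\|\xi\|_{L^2(Q)}$ with a constant independent of $\zeta$, which in turn relies on having the uniform $L^\infty$-bounds on both $y_u^\xi$ and $p_u$ in hand before the estimate is attempted.
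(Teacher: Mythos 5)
Your proposal is correct and follows essentially the same route as the paper's proof: subtract the perturbed and unperturbed equations, use the mean value theorem together with the local Lipschitz bounds of Remark \ref{Rlip} to control the right-hand side of each difference equation, and then invoke the $\alpha$-independent linear estimates \eqref{wl2}, \eqref{clr} and Lemma \ref{estLs} exactly where you indicate. The treatment of the adjoint estimate \eqref{E4.5.a2}, including the identification of the two perturbation sources ($\eta$ and the replacement of $y_u$ by $y_u^\xi$) and the use of the uniform $L^\infty$ bounds to absorb the factor of $p_u$, matches the paper's argument.
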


\begin{proof}
	Subtracting the state equations in \eqref{s1} and using the mean value theorem we obtain
	\[
	\frac{d}{dt}(y^\xi_u - y_u)+\mathcal{A}(y^\xi_u - y_u) + \frac{\partial f}{\partial y}(x,t,y_\theta)(y^\xi_u - y_u) = \xi.
	\]
	Then, \eqref{wl2} implies \eqref{E4.3.2}. To prove \eqref{E4.4} we subtract the equations \eqref{eqz} satisfied by 
$z^\xi_{u,v}$ and $z_{u,v}$ to obtain
	\[
	\frac{d}{dt}(z^\xi_{u,v} - z_{u,v})+\mathcal{A}(z^\xi_{u,v} - z_{u,v}) +
     \frac{\partial f}{\partial y}(x,t,y^\xi_u)(z^\xi_{u,v} - z_{u,v}) = \Big[\frac{\partial f}{\partial y}(x,t,y_u) - \frac{\partial f}{\partial y}(x,t,y^\xi_u)\Big]z_{u,v}.
	\]
Now, using \eqref{wl2}, the mean value theorem, and \eqref{pertbound} we obtain
	\begin{align*}
		&\|   z^\xi_{u,v} - z_{u,v}\|   _{L^2(Q)} \le \hat C\Big\|   \Big[\frac{\partial f}{\partial y}(x,t,y_u) - 
       \frac{\partial f}{\partial y}(x,t,y^\xi_u)\Big]z_{u,v}\Big\|   _{L^2(Q)} \le \hat C\bar{C}\|   (y^\xi_u - y_u)z_{u,v}\|   _{L^2(Q)}\\
&\le \hat C\bar{C}\|   y^\xi_u - y_u\|   _{L^\infty(Q)}\|   z_{u,v}\|   _{L^2(Q)} \le C_r\hat C\bar{C}\| \xi\|_{L^r(Q)}\| z_{u,v}\|   _{L^2(Q)}.
	\end{align*}
	The proof for estimate \eqref{E4.4b} follows by the same argumentation but using \eqref{aeqestLs}. We denote by $\beta_1>$ the maximum of the constants appearing in the estimate above and its analog for \eqref{E4.4b}.
Finally, we subtract the adjoint states and employ the mean value theorem to find
	\begin{align*}
	&-\frac{d}{dt}(p^\eta_u - p_u)+\mathcal{A}^*(p^\eta_u - p_u) + \frac{\partial f} {\partial y}(x,t,y^\xi_u)(p^\eta_u - p_u) \\
	&=  \frac{\partial^2 L} {\partial y^2}(x,t,y_\theta)(y^\xi_u-y_u)+\frac{\partial^2 f} {\partial y^2}(x,t,y_\theta)(y^\xi_u-y_u)p_u+\eta.
	\end{align*}
The claim follows using \eqref{wl2}, \eqref{unibound}, and \eqref{E3.15} to estimate
	\begin{align*}
	\|p^\eta_u - p_u\|_{L^2(Q)}\leq (\hat C^2\bar{C}+M_{\mathcal U}\hat C^2\bar C+\hat C)(\| \xi\|_{L^2(Q)}+\| \eta\|_{L^2(Q)}).
	\end{align*}
\end{proof}

 \begin{Lemma}
Let $s\in [1,\frac{n+2}{n})\cap[1,2]$. Let $u \in\mathcal U$ and let $y_u$, $p_u$ be the corresponding state and adjoint state. 
Further, let $y_u^\zeta$ and $p^\zeta_u$ be solutions to the perturbed state and adjoint equation in \reff{s1per}
for the control $u$. There exist constants $C,\tilde C>0$,
independent of $\zeta \in \Gamma$, such that for $v\in \mathcal U$, the following estimates hold.
\begin{enumerate}
\item For $m=0$ in \reff{genobj}:
\begin{align}
\Big\vert  \int_Q&\Big(\frac {\partial H}{\partial u}(x,t,y_u,p_u)-
\frac {\partial H}{\partial u}(x,t,y^{\zeta}_u,p^{\zeta}_u)\Big)(v-u)\dx\dt\Big\vert\nonumber  \\
&\le  C(\|\xi\| _{L^2(Q)}+\|\eta\| _{L^2(Q)})\| z_{u,u - v}\| _{L^2(Q)}\\
&\le \tilde C(\| \xi\|_{L^2(Q)}+\|\eta\| _{L^2(Q)})\| v-u\|_{L^1(Q)}^{\frac{3s-2}{2s}}.
\label{esti2}
\end{align}
\item For a general $m\in \mathbb R$:
\begin{align}
\Big\vert  \int_Q&\Big(\frac {\partial H}{\partial u}(x,t,y_u,p_u)-
\frac {\partial H}{\partial u}(x,t,y^{\zeta}_u,p^{\zeta}_u)\Big)(v-u)\dx\dt\Big\vert\le \tilde C(\| \xi\|_{L^r(Q)}+\|\eta\| _{L^r(Q)})\| v-u\|_{L^1(Q)}.\label{Lr}
\end{align}
\end{enumerate}
\label{Impest}
\end{Lemma}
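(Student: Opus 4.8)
The plan is to exploit the affine structure of the Hamiltonian. Since $H(x,t,y,p,u)=L_0(x,t,y)+(my+g)u+p(u-f(x,t,y))$, we have $\frac{\partial H}{\partial u}=my+g+p$, so the first factor of the integrand is simply
\[
\frac {\partial H}{\partial u}(x,t,y_u,p_u)-\frac {\partial H}{\partial u}(x,t,y^{\zeta}_u,p^{\zeta}_u)=m(y_u-y_u^\zeta)+(p_u-p_u^\zeta).
\]
Write $\delta y:=y_u-y_u^\zeta$ and $\delta p:=p_u-p_u^\zeta$. Subtracting the two adjoint equations in \eqref{s1per} and \eqref{s1} and applying the mean value theorem to $f_y$ and to $L_y(\cdot,u)=(L_0)_y+mu$ (whose $y$-derivative $(L_0)_{yy}$ does not see the control), $\delta p$ solves, with homogeneous lateral data and $\delta p(\cdot,T)=0$,
\[
\mathcal L^*\delta p+f_y(\cdot,y_u^\zeta)\,\delta p=F,\qquad F:=-f_{yy}(\cdot,\tilde y)\,\delta y\,p_u+(L_0)_{yy}(\cdot,\hat y)\,\delta y-\eta ,
\]
where I deliberately keep the coefficient $f_y(\cdot,y_u^\zeta)$ so that the \emph{unperturbed} adjoint $p_u$ (bounded independently of $\zeta$, by the estimate following \eqref{E3.15} with $\zeta=0$) appears in $F$, instead of $p_u^\zeta$, whose bound carries the factor $\|\eta\|_{L^r(Q)}$ that would spoil the linear-in-$\eta$ estimate.

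For claim 1 ($m=0$) the first factor is $\delta p$. I would test the $\delta p$-equation against the linearized state $z^\xi_{u,v-u}$ from \eqref{eqz}, which carries the \emph{same} coefficient $f_y(\cdot,y_u^\zeta)$. Integrating by parts in time—the temporal boundary terms vanish since $z^\xi_{u,v-u}(\cdot,0)=0$ and $\delta p(\cdot,T)=0$—gives the duality identity $\int_Q\delta p\,(v-u)\dx\dt=\int_Q z^\xi_{u,v-u}\,F\dx\dt$. Estimating $\|F\|_{L^2(Q)}\le C(\|\xi\|_{L^2(Q)}+\|\eta\|_{L^2(Q)})$ via $\|\delta y\|_{L^2(Q)}\le\hat C\|\xi\|_{L^2(Q)}$ from \eqref{E4.3.2}, Remark \ref{Rlip}, and the boundedness of $p_u$, and then replacing $z^\xi_{u,v-u}$ by $z_{u,v-u}=-z_{u,u-v}$ through \eqref{E4.4} together with \eqref{pertbound}, delivers the first inequality in \eqref{esti2}.

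The second inequality is a pure interpolation estimate for $z:=z_{u,u-v}$ with datum $w:=u-v$, where $\|w\|_{L^\infty(Q)}\le 2M_{\mathcal U}$. Lemma \ref{estLs} gives $\|z\|_{L^s(Q)}\le C\|w\|_{L^1(Q)}$, while the $C(\bar Q)$-estimate \eqref{clr}, applicable because $s<\frac{n+2}{n}$ forces the conjugate $s'>1+\frac n2$, gives $\|z\|_{L^\infty(Q)}\le C\|w\|_{L^{s'}(Q)}$; interpolating $\|w\|_{L^{s'}(Q)}\le\|w\|_{L^1(Q)}^{1/s'}\|w\|_{L^\infty(Q)}^{1/s}$ yields $\|z\|_{L^\infty(Q)}\le C\|w\|_{L^1(Q)}^{(s-1)/s}$. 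Feeding these two bounds into $\|z\|_{L^2(Q)}^2\le\|z\|_{L^\infty(Q)}^{2-s}\|z\|_{L^s(Q)}^{s}$ (valid since $s\le2$) produces the exponent $\frac s2+\frac{s-1}{s}\cdot\frac{2-s}{2}=\frac{3s-2}{2s}$, exactly as stated.

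For claim 2 ($m\ne0$) the sharp duality is unnecessary: I would instead factor out $\|v-u\|_{L^1(Q)}$ by Hölder and bound the first factor in $L^\infty$. The state-difference equation and \eqref{clr} give $\|\delta y\|_{C(\bar Q)}\le C_r\|\xi\|_{L^r(Q)}$; inserting this into $F$ yields $\|F\|_{L^r(Q)}\le C(\|\xi\|_{L^r(Q)}+\|\eta\|_{L^r(Q)})$, and applying \eqref{clr} to the $\delta p$-equation (whose coefficient $f_y\ge0$, so the constant is $\zeta$-independent) gives $\|\delta p\|_{C(\bar Q)}\le C(\|\xi\|_{L^r(Q)}+\|\eta\|_{L^r(Q)})$. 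Then
\[
\Big|\int_Q(m\,\delta y+\delta p)(v-u)\dx\dt\Big|\le\big(|m|\,\|\delta y\|_{L^\infty(Q)}+\|\delta p\|_{L^\infty(Q)}\big)\|v-u\|_{L^1(Q)}
\]
establishes \eqref{Lr}. The main obstacle is the first claim: choosing the linearization point $y_u^\zeta$ so that only $p_u$—and hence $L^2$- rather than $L^r$-norms of $\eta$—enters the source, and then carefully tracking the interpolation exponents so as to reach precisely $\frac{3s-2}{2s}$ rather than the cruder $\frac s2$ that a constant $L^\infty$-bound on $z$ would give.
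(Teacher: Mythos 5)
Your proof is correct and follows essentially the same route as the paper's: a duality (integration\-/by\-/parts) argument that converts $\int_Q(p_u-p_u^\zeta)(v-u)\dx\dt$ into pairings with linearized states, the perturbation bounds of Lemma \ref{T4.1}, and the interpolation $\|z\|_{L^2(Q)}\le\|z\|_{L^\infty(Q)}^{(2-s)/2}\|z\|_{L^s(Q)}^{s/2}$ combined with Lemma \ref{estLs} and \eqref{clr} to produce the exponent $\frac{3s-2}{2s}$, while part 2 is the identical $L^\infty$--$L^1$ H\"older estimate. The only difference is bookkeeping: the paper dualizes $p_u$ and $p_u^\zeta$ separately (against $z_{u,u-v}$ and $z^{\zeta}_{u,u-v}$) and splits the result into three terms $I_1,I_2,I_3$, whereas you form the equation for $\delta p$ and dualize once against $z^\xi_{u,v-u}$, collecting the linearization errors in the source $F$ and invoking \eqref{E4.4} only at the end --- an equivalent and equally valid arrangement.
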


\begin{proof}
We consider the first case, $m=0$. We begin with integrating by parts
\begin{align}
&\Big\vert  \int_Q\Big(\frac {\partial H}{\partial u}(x,t,y_u,p_u)-\frac {\partial H}{\partial u}(x,t,y^{\zeta}_u,p^{\zeta}_u)\Big)(v-u)\dx\dt\Big\vert  \nonumber\\
&\le\Big\vert  \int_Q\Big[\frac{\partial L_0}{\partial y}(x,t,y_u)z_{u,u-v}-\frac{\partial L_0}{\partial y}(x,t,y^{\zeta}_u)z_{u,u-v}^{\zeta}\Big]\dx\dt\Big\vert+\Big\vert  \int_Q z_{u,u-v}^{\zeta} \eta \dx \dt\Big\vert \nonumber \\
&\le\int_Q\Big\vert  \frac{\partial L_0}{\partial y}(x,t,y_u)-\frac{\partial L_0}{\partial y}(x,t,y^{\zeta}_u)\Big\vert  \Big\vert z_{u,u-v}\Big\vert  \dx\dt+\int_Q\Big\vert  \frac{\partial L_0}{\partial y}(x,t,y^{\zeta}_u)+\eta \Big\vert   \Big\vert  z_{u,u-v}-z_{u,u-v}^{\zeta}\Big\vert  \dx\dt\nonumber\\
& +\Big\vert  \int_Q \eta z_{u,u-v}\dx\dt\Big\vert  =I_1+I_2+I_3.\nonumber
\end{align}
For the first term we use the H\"older inequality, the mean value theorem, \eqref{aeqestLs}, \eqref{unibound},  and \eqref{E4.3.2} to estimate
\begin{align*}
I_1& \leq \int_Q\Big\vert  \frac{\partial L_0}{\partial y}(x,t,y_u) - \frac{\partial L_0}{\partial y}(x,t,y_{u}^\zeta)\Big\vert   \vert  z_{u,u - v}\vert  \dx\dt\\
& \le \bar{C}\|   y^\zeta_{u} - y_{u}\|   _{L^2(Q)}\|   z_{u,u - v}\|   _{L^2(Q)} \le \bar{C}\hat C\|   \xi\|   _{L^2(Q)}\|   z_{u,u - v}\|   _{L^2(Q)} \\
&\le \bar{C}\hat CC_{s'}^{1+\frac{2-s}{2}}(2M_{\mathcal U})^{\frac{(s'-1)(2-s)}{2s'}}\|\xi\| _{L^2(Q)}\| u-v\| _{L^1(Q)}^{1+\frac{s-2}{2s}}.
\end{align*}
Here we used that by Theorem \ref{mainex} and Lemma \ref{aeqestLs} it holds
\begin{align*}
\|z_{u,u - v}\|_{L^2(Q)}&\leq \|z_{u,u - v}\|_{L^\infty(Q)}^\frac{2-s}{2}\|z_{u,u - v}\| _{L^s(Q)}^{\frac{s}{2}}\leq C_{s'}^{1+\frac{2-s}{2}}(2M_{\mathcal U})^{\frac{(s'-1)(2-s)}{2s'}}\| u-v\|_{L^1(Q)}^{\frac{2-s}{2s'}+\frac{s}{2}},
\end{align*}
and noticing that $\frac{2-s}{2s'}+\frac{s}{2}=1-\frac{2-s}{2s}$.
The second term is estimated by using \eqref{unibound}, H\"older's inequality, and \eqref{E4.4}:
\begin{align*}
I_2 &\leq  \int_Q\Big\vert  \frac{\partial L_0}{\partial y}(x,t,y^\zeta_u)+\eta\Big\vert   \Big\vert  z^\zeta_{u,u-v} - z_{u,u -v}\Big\vert  \dx\dt\\
&\le \beta_1\max\{d_1,\vert Q\vert^{\frac{1}{r}}C_{pe}\}(\|\xi\| _{L^2(Q)}+\|\eta\|_{L^2(Q)})\| z_{u,u - v}\|_{L^2(Q)}\\
&\le d_2(\|\xi\| _{L^2(Q)}+\|\eta\|_{L^2(Q)})\|u- v\|_{L^1(Q)}^{1+\frac{s-2}{2s}},
\end{align*}
where 
$d_1:=\|\psi_{M_{\mathcal U}}\| _{L^{s'}(Q)}$ and $d_2:=\beta_1\max\{d_1,\vert Q\vert^{\frac{1}{r}}C_{pe}\} C_{s'}^{1+\frac{2-s}{2}}(2M_{\mathcal U})^{\frac{(s'-1)(2-s)}{2s'}}$.
For last term we estimate
\begin{equation*}
  I_3 \leq \Big\vert  \int_Q \eta z_{u,u-v}\dx\dt\Big\vert  \le \|z_{u,u-v}\|_{L^2(Q)}\|\eta\|_{L^2(Q)}.
\end{equation*}
We prove the second case \eqref{Lr}. By applying \eqref{clr} and arguing as in the proof of \eqref{E4.3.2} and \eqref{E4.5.a2} but for $r$, we infer the existence of a constant, again denoted by $\tilde C>0$, such that:
\begin{align*}
&\Big\vert  \int_Q\Big(\frac {\partial H}{\partial u}(x,t,y_u,p_u)-\frac {\partial H}{\partial u}(x,t,y^{\zeta}_u,p^{\eta}_u)\Big)(v-u)\dx\dt\Big\vert \nonumber \\
&=\Big\vert  \int_Q \Big[p_u-p^{\eta}_u+m(y_u-y^{\zeta}_u)\Big](v-u)\dx\dt\Big\vert\nonumber\\
&\leq \|p_u-p^{\eta}_u+m(y_u-y^{\zeta}_u)\|_{L^\infty(Q)}\|u-\bar u\|_{L^1(Q)}\\
& \le \tilde C(\|\xi\|_{L^r(Q)}+\| \eta\|_{L^r(Q)})\|v-u\| _{L^1(Q)}.
\end{align*}
\end{proof}

The main result in the paper follows.

\begin{Theorem}\label{Ssr}
Let assumption
\ref{A4}(A$_0$) be fulfilled for the reference solution 
$\bar \psi = (\bar y,\bar p, \bar u)$ of $0 \in \Phi(\psi)$.
Then the mapping $\Phi$ is strongly metrically H\"older subregular at 
$(\bar \psi,0)$. More precisely, for every $\varepsilon \in (0, 1/2]$ there exist $\alpha_n > 0$ and $\kappa_n$ 
(with $\alpha_1$ and $\kappa_1$ independent of $\varepsilon$) such that 
for all $\psi \in \mathcal Y$ with $\| u-\bar u\|_{L^1(Q)} \leq \alpha_n$ and $\zeta\in \Gamma$ satisfying $\zeta\in\Phi(\psi)$,
the following inequalities are satisfied.
\begin{enumerate}
\item In the case $m=0$ in \reff{genobj}:
\begin{align} \label{EEu}
&\!\!\!\!\!\!\!\!\!\!\!\! \| \bar u-u \|_{L^1(Q)}\le \kappa_n \Big(\|\rho \|_{L^{\infty}(Q)}+\|\xi\|_{L^2(Q)}+\|\eta\| _{L^2(Q)}\Big)^{\theta_0},\\
& \!\!\!\!\!\!\!\!\!\!\!\! \label{EEy}  \|y_{\bar u}-y^{\zeta}_u\|_{L^2(Q)}+\| p_{\bar u}-p^{\zeta}_u\|_{L^2(Q)}
\le \kappa_n \Big(\|\rho \|_{L^{\infty}(Q)}+\|\xi\|_{L^2(Q)}+\|\eta\| _{L^2(Q)}\Big)^{\theta},
\end{align}
where 
\begin{align} \label{Ethetas}
     &  \theta_0 = \theta = 1 & \hspace*{-2cm} \mbox{ if } \;  n = 1, & \qquad\\
    & \theta_0 = \theta = 1 - \varepsilon & \hspace*{-2cm} \mbox{ if } \; n= 2,& \qquad \\
  &  \theta_0 = \frac{10}{11}  - \varepsilon, \;\;  \theta = \frac{9}{11}  - \varepsilon & \hspace*{-2cm}  \mbox{ if } \; n= 3.& \qquad
\end{align}
\item In the general case $m\in \mathbb R$:
\begin{align} \label{EEum}
&\!\!\!\!\!\!\!\!\!\!\!\!\| \bar u-u \|_{L^1(Q)}\le \kappa_n \Big(\|\rho \|_{L^{\infty}(Q)}+\|\xi\|_{L^r(Q)}+\|\eta\| _{L^r(Q)}\Big),\\
& \!\!\!\!\!\!\!\!\!\!\!\! \label{EEym}  \|y_{\bar u}-y^{\zeta}_u\|_{L^2(Q)}+\| p_{\bar u}-p^{\zeta}_u\|_{L^2(Q)}
\le \kappa_n \Big(\|\rho \|_{L^{\infty}(Q)}+\|\xi\|_{L^r(Q)}+\|\eta\| _{L^r(Q)}\Big)^{\theta_0}.
\end{align}
\end{enumerate}
\end{Theorem}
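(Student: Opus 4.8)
The plan is to reduce both the control estimate and the state/adjoint estimate to a single scalar inequality for $a:=\|u-\bar u\|_{L^1(Q)}$, produced by playing the perturbed variational inequality against the reference one. First I would use the identity $\frac{\partial H}{\partial u}(\cdot,y_u,p_u)=p_u+my_u+g$, which by \eqref{E3.4} gives $\int_Q\frac{\partial H}{\partial u}(\cdot,y_u,p_u)(u-\bar u)\dx\dt=J'(u)(u-\bar u)$. Reading the third line of \eqref{s1per} as $\rho-\frac{\partial H}{\partial u}(\cdot,y_u^\zeta,p_u^\zeta)\in N_{\mathcal U}(u)$ and testing it with $\bar u\in\mathcal U$ yields $\int_Q\frac{\partial H}{\partial u}(\cdot,y_u^\zeta,p_u^\zeta)(u-\bar u)\dx\dt\le\int_Q\rho\,(u-\bar u)\dx\dt\le\|\rho\|_{L^\infty(Q)}\,a$. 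Adding and subtracting the unperturbed Hamiltonian derivative and bounding the mismatch with Lemma \ref{Impest}, I obtain $J'(u)(u-\bar u)\le\|\rho\|_{L^\infty(Q)}\,a+E(\zeta,a)$, where $E$ is the perturbation-dependent error supplied by that lemma.

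The crux is to combine this upper bound with the lower bound coming from Assumption \ref{A4}($A_0$). Since \ref{A4}($A_0$) holds, Lemma \ref{good} gives $\bar\gamma_0 a^2\le J'(u)(u-\bar u)$ for all $u$ with $\|y_u-\bar y\|_{C(\bar Q)}$ small, the smallness being guaranteed once $a$ is small via $\|y_u-\bar y\|_{L^\infty(Q)}\le C_r(2M_{\mathcal U})^{(r-1)/r}a^{1/r}$. This produces the master inequality $\bar\gamma_0 a^2\le\|\rho\|_{L^\infty(Q)}\,a+E(\zeta,a)$. For $m=0$, Lemma \ref{Impest} gives $E(\zeta,a)\le\tilde C(\|\xi\|_{L^2(Q)}+\|\eta\|_{L^2(Q)})\,a^{q}$ with $q=\tfrac32-\tfrac1s$, so after dividing by $a$ I would split into the two regimes in which either the $\|\rho\|$-term or the perturbation term dominates: the first gives $a\lesssim\|\rho\|_{L^\infty(Q)}$ and the second $a\lesssim(\|\xi\|_{L^2}+\|\eta\|_{L^2})^{1/(2-q)}$, whence \eqref{EEu} with $\theta_0=1/(2-q)$. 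For general $m$, the second estimate of Lemma \ref{Impest} replaces $a^{q}$ by $a$ at the price of $L^r$-norms, the master inequality becomes $\bar\gamma_0 a\le\|\rho\|_{L^\infty(Q)}+\tilde C(\|\xi\|_{L^r}+\|\eta\|_{L^r})$, and \eqref{EEum} follows with Lipschitz exponent $1$. Optimising the free parameter $s$ over $[1,\tfrac{n+2}{n})\cap[1,2]$ and letting it approach the endpoint reproduces the values of $\theta_0$ in \eqref{Ethetas}; the strict inequality causes the loss $\varepsilon$ for $n\in\{2,3\}$ and disappears for $n=1$, where $s=2$ is admissible.

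For the state and adjoint bounds \eqref{EEy} and \eqref{EEym} I would insert the unperturbed pair $(y_u,p_u)$ and use the triangle inequality. The genuinely perturbative parts $\|y_u-y_u^\zeta\|_{L^2(Q)}$ and $\|p_u-p_u^\zeta\|_{L^2(Q)}$ are controlled with the full power of the perturbation by \eqref{E4.3.2} and \eqref{E4.5.a2} of Lemma \ref{T4.1}. The control-induced parts $\|y_{\bar u}-y_u\|_{L^2(Q)}$ and $\|p_{\bar u}-p_u\|_{L^2(Q)}$ reduce, through Lemma \ref{sprep} and the linear adjoint equation, to $\|z_{\bar u,u-\bar u}\|_{L^2(Q)}$, which by interpolating the $L^s$-bound of Lemma \ref{estLs} (giving $\|z_{\bar u,u-\bar u}\|_{L^s(Q)}\le C\,a$ for $s<\tfrac{n+2}{n}$) against an $L^\infty$-bound satisfies $\|z_{\bar u,u-\bar u}\|_{L^2(Q)}\le C\,a^{q}$ with the same $q=\tfrac32-\tfrac1s$. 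Substituting the control estimate then produces the reduced exponents: for $m=0$ the state/adjoint exponent is the product $\theta_0\,q=\theta$, while in the general case the control estimate is already Lipschitz, so that tracking the exponents through the interpolation step alone reproduces the value $\theta_0$ recorded in \eqref{Ethetas}.

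The hard part will be the sharp bookkeeping of exponents, where two mechanisms must be balanced against the dimension: the two-regime analysis of the scalar inequality (which fixes $\theta_0$) and the $L^s$--$L^\infty$ interpolation constrained by the parabolic admissibility $s<\tfrac{n+2}{n}$ (which fixes the further loss to $\theta$). For $n=3$ these constraints are tight and force the fractional exponents $10/11$ and $9/11$. A secondary technical point is to make all smallness hypotheses of Lemmas \ref{good}, \ref{sprep} and \ref{biglemmat} simultaneously effective; this is done by trading $L^1$-smallness of $u-\bar u$ for $C(\bar Q)$-smallness of $y_u-\bar y$ via $\|y_u-\bar y\|_{L^\infty(Q)}\lesssim a^{1/r}$, and by absorbing the constant of Lemma \ref{sprep} that relates $z_{u,\cdot}$ to $z_{\bar u,\cdot}$.
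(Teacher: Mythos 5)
Your proposal follows essentially the same route as the paper's proof: testing the perturbed variational inequality with $\bar u$, bounding the Hamiltonian-derivative mismatch via Lemma \ref{Impest}, invoking the first-order growth from Lemma \ref{good} under Assumption \ref{A4}($A_0$) to obtain the scalar inequality $\bar\gamma_0\|u-\bar u\|_{L^1}^2\le\|\rho\|_{L^\infty}\|u-\bar u\|_{L^1}+E(\zeta,\|u-\bar u\|_{L^1})$, and then passing to the states by $L^s$--$L^\infty$ interpolation and the triangle inequality with $(y_u,p_u)$ controlled by Lemma \ref{T4.1}. The exponent bookkeeping ($\theta_0=2s/(s+2)$, $\theta=(3s-2)/(s+2)$, with $s\to(n+2)/n$) coincides with the paper's, so the proposal is correct and is essentially the paper's own argument with the two-regime analysis made explicit.
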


\begin{proof}
We begin with the proof for $m=0$. We select $\alpha_1 <\tilde\alpha_0$ according to Lemma \ref{good}. Let $\zeta=(\xi,\eta,\rho)\in\mathcal Z$ and $\psi=(y^{\zeta}_u,p^{\zeta}_u,u)$ with $\| u-\bar u\|_{L^1(Q)}\leq\alpha_1$ such that  $\zeta\in\Phi(\psi)$, i.e. 
\begin{align*}
\left\{ \begin{array}{cll}
\xi&=&\mathcal Ly^{\zeta}_u+f(\cdot,\cdot,y^{\zeta}_u) - u,\\
\eta&=&\mathcal L^*p_u^{\zeta}-\frac {\partial H}{\partial y}(\cdot,y^{\zeta}_u,p_u^{\zeta},u),\\
\rho&\in& \frac {\partial H}{\partial u}(\cdot,y^{\zeta}_u,p_u^{\zeta}) + N_{\mathcal U}(u).
\end{array} \right.
\end{align*}
Let $y_u$ and $p_u$ denote the solutions to the unperturbed problem with respect to $u$, i.e.
\[
u=\mathcal Ly_u+f(\cdot,\cdot,y_u)\textrm{ and } 0=\mathcal L^*p_u-\frac {\partial H}{\partial y}(\cdot,y_u,p_u,u).
\]
By Lemma \ref{T4.1}, there exists $\hat C,\beta_2>0$ independent of $\psi$ and $\zeta$ such that
\begin{align}\label{mt1}
\|y^{\zeta}_u-y_{u}\|_{L^2(Q)}&+\|p^{\zeta}_u-p_{u}\| _{L^2(Q)}\le (\hat C+\beta_2)\Big(\|\xi\|_{L^2(Q)}+\|\eta\|_{L^2(Q)}\Big).
	\end{align}
By the definition of the normal cone, $\rho\in \frac {\partial H}{\partial u}(\cdot,\cdot,y^{\zeta}_u,p^{\zeta}_u)+ N_{\mathcal U}(u)$ is equivalent to
\begin{align*}
0\ge \int_Q(\rho-\frac {\partial H}{\partial u}(\cdot,\cdot,y^{\zeta}_u,p^{\zeta}_u))(w-u)\ \ \forall w\in \mathcal U.
\end{align*}
We conclude for $w=\bar u$, 
\begin{align}
0&\ge \int_Q \frac {\partial H}{\partial u}(\cdot,\cdot,y_u,p_u)(u-\bar u)+\int_Q(\rho+\frac {\partial H}{\partial u}(\cdot,\cdot,y_u,p_u)-\frac {\partial H}{\partial u}(\cdot,\cdot,y^{\zeta}_u,p^{\zeta}_u))(\bar u-u)\nonumber\\
&\ge J'(u)(u-\bar u)-\|\rho\| _{L^{\infty}(Q)}\|\bar u-u\| _{L^1(Q)}-\Big\vert  \int_Q(\frac {\partial H}{\partial u}(\cdot,\cdot,y_u,p_u)-\frac {\partial H}{\partial u}(\cdot,\cdot,y^{\zeta}_u,p^{\zeta}_u))(\bar u-u)\dx\dt\Big\vert.\label{78}
\end{align}
By Lemma \ref{Impest}, we have an estimate on the third term. Since $\| u-\bar u\|_{L^1(Q)}<\tilde \alpha_0$, we estimate by Lemma \ref{good} and Lemma \ref{Impest}
 \begin{align*}
\|    u-\bar u\|   _{L^1(Q)}^2\tilde \gamma&\le J'(u)(u-\bar u)\le \tilde C \Big(\|   \xi\|   _{L^2(Q)}+\|   \eta\|   _{L^2(Q)}\Big) \| u-\bar u\| _{L^1(Q)}^{1+(s-2)/(2s)}+\|    \rho\|   _{L^{\infty}(Q)}\|    \bar u-u\|   _{L^1(Q)}
\end{align*}
and consequently for an adapted constant, denoted in the same way
\begin{equation*}
\|\bar u-u\|_{L^1(Q)}\le \tilde C\Big(\|\rho \| _{L^{\infty}(Q)}+\|\xi\| _{L^2(Q)}+\|\eta\| _{L^2(Q)}\Big)^{\frac{2s}{s+2}}.
\end{equation*}
To estimate the states, we use the estimate for the controls. We notice that $(2-s)/(2s')+s/2=1+(s-2)(2s)$ and obtain
\begin{align}
\|y_{\bar u}-y_u\| _{L^{2}(Q)}&\le \| y_{\bar u}-y_u \|_{L^{\infty}(Q)}^{\frac{2-s}{2}} \|y_{\bar u}-y_u\| _{L^s(Q)}^{\frac{s}{2}}\le C_r^{\frac{2-s}{2}}\|\bar u-u \|_{L^1(Q)}^{1+\frac{s-2}{2s}}\label{stc}.
\end{align}
Thus, for a constant again denoted by $\tilde C$ and with $(1+\frac{s-2}{2s})\frac{2s}{s+2}=\frac{3s-2}{2+s}$,
\begin{equation*}\label{Eststate}
\|   y_{\bar u}-y_u\| _{L^{2}(Q)}\le \tilde  C \Big(\|\xi\|_{L^2(Q)}+\|\eta\| _{L^2(Q)}+\|\rho\| _{L^{\infty}(Q)}\Big)^{\frac{3s-2}{2+s}}.
\end{equation*}
Next, we realize that by Lemma \ref{T4.1} and \eqref{Eststate}
\begin{align*}
&\|   y_{\bar u}-y_u^\zeta\|   _{L^2(Q)}\le \|   y_{\bar u}-y_u \|   _{L^2(Q)}+\|   y_{u}-y_u^\zeta \|   _{L^2(Q)}\le \max\{\tilde C,\hat C\}\Big(\|   \xi\|   _{L^2(Q)}+\|\eta\| _{L^2(Q)}+\|\rho\| _{L^{\infty}(Q)}\Big)^{\frac{3s-2}{2+s}}.
\end{align*}
Using $\|p_{\bar u}-p_u\| _{L^2(Q)}\le \hat C\| y_{\bar u}-y_u\|_{L^2(Q)}$ and \eqref{E4.5.a2}, the same estimate holds for the adjoint state
\begin{align*}
&\|p_{\bar u}-p_u^\zeta \|_{L^2(Q)}\le \| p_{\bar u}-p_u \| _{L^2(Q)}+\|   p_{u}-p_u^\zeta \|_{L^2(Q)}\le (\hat C\tilde C+\beta_2) \Big(\|   \xi\|_{L^2(Q)}+\|\eta\| _{L^2(Q)}+\|\rho\| _{L^{\infty}(Q)}\Big)^{\frac{3s-2}{2+s}},
\end{align*}
subsequently we define $\kappa:=\max\{\tilde C,\hat C\}$. Finally, we consider the case $m\neq 0$. Using estimate \ref{Lr} in \eqref{78} and arguing from that as for the case $m=0$, we infer the existence of a constant $\tilde C>0$ such that
\begin{equation*}
\| u-\bar u\|_{L^1(Q)}\leq \tilde C \Big( \|\rho \| _{L^{\infty}(Q)}+\|\xi\| _{L^r(Q)}+\|\eta\| _{L^r(Q)}\Big).
\end{equation*}
This implies under \eqref{stc} the estimate for the states and adjoint-states
\begin{align*}
&\|   y_{\bar u}-y_u^\zeta\|   _{L^2(Q)}+\|p_{\bar u}-p_u^\zeta \| _{L^2(Q)}\le \max\{\tilde C,\hat C\tilde C+\beta_2\}\Big(\|   \xi\|   _{L^2(Q)}+\|\eta\| _{L^2(Q)}+\|\rho\| _{L^{\infty}(Q)}\Big)^{1+(s-2)/(2s)}.
\end{align*}
To determine $\theta$ and $\theta_0$ we notice that the functions $s\to \frac{s-2}{2s}$ and $s\to \frac{3s-2}{2+s}$ are monotone. Inserting the value for $\frac{n+2}{2}$ for each case $n\in \{1,2,3\}$ completes the proof.
\end{proof}

To obtain results under Assumption \ref{A4} for $k\in\{1,2\}$, we need additional restrictions. 
We either don't allow perturbations $\rho$ (appearing in the inclusion in \reff{s1per}) or they need to satisfy 
\begin{equation}
           \rho\in D(\mathcal L^*).
\label{perturbeq}
\end{equation}

\begin{Theorem} \label{SsHr}
Let $m=0$ and let 
some of the assumptions 
$(A_1),(B_1)$ and $(A_2),(B_2)$ be fulfilled for the reference solution $\bar \psi = (\bar y,\bar p, \bar u)$ of $0 \in \Phi(\psi)$.
Let, in addition, the set $\Gamma$ of feasible perturbations be restricted to such $\zeta \in \Gamma$ for which 
the component $\rho$ is either zero or satisfies \eqref{perturbeq}. 
The numbers $\alpha_n$, $\kappa_n$ and $\varepsilon$ are as in Theorem \ref{Ssr}.
Then the following statements hold for $n \in \{ 1,2,3\}$:

1. Under Assumption \ref{A4}, cases $(A_1)$ and $(B_1)$, the estimations 
\begin{align*}
  &\| \bar u-u \|_{L^1(Q)}\le \kappa_n \Big(\|\mathcal L^*\rho \|_{L^{\infty}(Q)}+\|\xi\|_{L^2(Q)}+\|\eta\| _{L^2(Q)}\Big),\\
 &\|y_{\bar u}-y^{\zeta}_u\|_{L^2(Q)}+\| p_{\bar u}-p^{\zeta}_u\|_{L^2(Q)}
\le \kappa_n \Big(\|\rho \|_{L^{\infty}(Q)}+\|\xi\|_{L^2(Q)}+\|\eta\| _{L^2(Q)}\Big)^{\theta_0},
\end{align*}
with $\theta_0$ as in Theorem \ref{Ssr}, hold 
for all $u\in \mathcal U$ with $\|y_u-\bar y\|_{L^\infty(Q)}<\alpha_n$, in the case of $(A_1)$, or 
$\|u-\bar u \|_{L^1(Q)}<\alpha_n$ in the case ($B_1$), and for all $\zeta\in \Gamma$ satisfying $\zeta\in\Phi(\psi)$.

2. Under Assumption \ref{A4}, cases $(A_2)$ and $(B_2)$, the estimation
\begin{equation*}
\|\bar y-y^{\zeta}_u\|_{L^2(Q)}+\|\bar p-p^{\zeta}_u\|_{L^2(Q)} \le 
        \kappa_n \Big(  \|\xi\| _{L^2(Q)} + \|\eta\| _{L^2(Q)} + \| \mathcal L^* \rho \|_{L^{2}(Q)} \Big)
\end{equation*}
hold for all $u\in \mathcal U$ with $\|y_u-\bar y\|_{L^\infty(Q)}<\alpha_n$, in the case of $(A_2)$, or 
$\|u-\bar u \|_{L^1(Q)}<\alpha_n$ in the cases ($B_2$), and for all $\zeta\in \Gamma$ satisfying $\zeta\in\Phi(\psi)$.
\end{Theorem}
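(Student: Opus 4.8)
The plan is to follow closely the scheme of the proof of Theorem \ref{Ssr}, changing only the way the component $\rho$ of the perturbation enters the estimate of $J'(u)(u-\bar u)$, and then invoking the sharper growth of Lemma \ref{good} for $k\in\{1,2\}$ in place of the quadratic one used for $k=0$. As in \reff{78}, I would fix $\psi=(y_u^\zeta,p_u^\zeta,u)$ with $\zeta\in\Phi(\psi)$, test the variational inequality with $w=\bar u$, and reach
\[
J'(u)(u-\bar u)\le \Big|\int_Q\rho\,(\bar u-u)\dx\dt\Big| + \Big|\int_Q\Big(\tfrac{\partial H}{\partial u}(\cdot,y_u,p_u)-\tfrac{\partial H}{\partial u}(\cdot,y_u^\zeta,p_u^\zeta)\Big)(\bar u-u)\dx\dt\Big|,
\]
where the last term is controlled by $C(\|\xi\|_{L^2(Q)}+\|\eta\|_{L^2(Q)})\|z_{\bar u,u-\bar u}\|_{L^2(Q)}$ through Lemma \ref{Impest} (case $m=0$) and \reff{E2.15.2}, while the states and adjoint states are compared via \reff{E4.3.2} and \reff{E4.5.a2} of Lemma \ref{T4.1}.

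The crucial new step is the treatment of $\int_Q\rho(\bar u-u)$: the crude bound $\|\rho\|_{L^\infty(Q)}\|\bar u-u\|_{L^1(Q)}$ used for $k=0$ is too weak, since for $k\in\{1,2\}$ the growth of Lemma \ref{good} carries the factor $\|z_{\bar u,u-\bar u}\|_{L^2(Q)}$, which may be far smaller than $\|u-\bar u\|_{L^1(Q)}$. Exploiting the restriction \reff{perturbeq}, that is $\rho\in D(\mathcal L^*)$ (so $\rho(\cdot,T)=0$), I would integrate by parts against the linearized state $z_{\bar u,\bar u-u}$, which solves $\mathcal L z+f_y(\cdot,y_{\bar u})z=\bar u-u$ with $z(\cdot,0)=0$, obtaining
\[
\int_Q\rho(\bar u-u)\dx\dt=\int_Q(\mathcal L^*\rho)\,z_{\bar u,\bar u-u}\dx\dt+\int_Q f_y(\cdot,y_{\bar u})\,\rho\, z_{\bar u,\bar u-u}\dx\dt.
\]
To express the right-hand side solely through $\mathcal L^*\rho$, I would use the a priori bound $\|\rho\|_{L^2(Q)}\le C\|\mathcal L^*\rho\|_{L^2(Q)}$, obtained by applying Theorem \ref{mainex} (after time reversal, with $\alpha\equiv0$) to the backward equation solved by $\rho$; since $f_y$ is bounded by Remark \ref{Rlip}, Cauchy--Schwarz then gives $|\int_Q\rho(\bar u-u)|\le C\|\mathcal L^*\rho\|_{L^2(Q)}\|z_{\bar u,u-\bar u}\|_{L^2(Q)}$, exactly the structure matching the growth factor.

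Combining these, for $k=2$ Lemma \ref{good} yields $\bar\gamma_2\|z_{\bar u,u-\bar u}\|_{L^2(Q)}^2\le C(\|\mathcal L^*\rho\|_{L^2(Q)}+\|\xi\|_{L^2(Q)}+\|\eta\|_{L^2(Q)})\|z_{\bar u,u-\bar u}\|_{L^2(Q)}$; dividing by $\|z_{\bar u,u-\bar u}\|_{L^2(Q)}$ and using \reff{E2.14.2} with Lemma \ref{T4.1} gives the asserted Lipschitz bound for $\|\bar y-y_u^\zeta\|_{L^2(Q)}+\|\bar p-p_u^\zeta\|_{L^2(Q)}$. For $k=1$ the same cancellation instead removes the common factor and leaves $\bar\gamma_1\|u-\bar u\|_{L^1(Q)}\le C(\|\mathcal L^*\rho\|_{L^\infty(Q)}+\|\xi\|_{L^2(Q)}+\|\eta\|_{L^2(Q)})$ (using $\|\cdot\|_{L^2(Q)}\le |Q|^{1/2}\|\cdot\|_{L^\infty(Q)}$), the Lipschitz control estimate, after which the Hölder state estimate with exponent $\theta_0$ follows from the interpolation \reff{stc}. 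Whether the admissible neighborhood is $\|y_u-\bar y\|_{L^\infty(Q)}<\alpha_n$ or $\|u-\bar u\|_{L^1(Q)}<\alpha_n$ is dictated by which of $(A_k)$, $(B_k)$ is assumed, through the corresponding branch of Lemma \ref{good}. I expect the main difficulty to be precisely this duality handling of $\rho$ — in particular, verifying that the zeroth-order term $f_y\rho$ produced by the integration by parts is genuinely absorbed into $\|\mathcal L^*\rho\|$ by the backward a priori estimate, uniformly over the admissible perturbations.
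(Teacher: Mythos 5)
Your proposal is correct and follows essentially the same route as the paper: the key step in both is to use $\rho\in D(\mathcal L^*)$ to integrate by parts against the linearized state, yielding $\vert\int_Q\rho(u-\bar u)\vert\le\|z_{\bar u,u-\bar u}\|_{L^2(Q)}\big(\|\mathcal L^*\rho\|_{L^2(Q)}+\|f_y\|_{L^\infty(Q)}\|\rho\|_{L^2(Q)}\big)$, after which Lemma \ref{good} (for $k=1,2$) and Lemma \ref{Impest} are combined exactly as in Theorem \ref{Ssr}. You in fact make explicit a point the paper leaves implicit, namely the absorption of the zeroth-order term via the backward a priori bound $\|\rho\|_{L^2(Q)}\le C\|\mathcal L^*\rho\|_{L^2(Q)}$, which is valid since $\rho$ vanishes at $t=T$ and on $\Sigma$.
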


\begin{proof}
We first notice that if the perturbation $\rho$ satisfies \eqref{perturbeq}, it holds
\begin{align*}
\int_Q \rho(u-\bar u)\dx \dt&=\int_Q((\frac{d}{dt}+\mathcal A) z_{\bar u,u-\bar u}+f_y(x,t,y_{\bar u})z_{\bar u,u-\bar u})\rho\dx\dt\\
&=\int_Q((-\frac{d}{dt}+\mathcal A^*) \rho+f_y(x,t,y_{\bar u})\rho)z_{\bar u,u-\bar u}\dx\dt.
\end{align*}
Thus
\begin{align*}
&\quad\Big\vert  \int_Q \rho(u-\bar u)\dx \dt\Big\vert\le \| z_{\bar u,u-\bar u}\|_{L^2(Q)}(\| \mathcal L^* \rho\| _{L^2(Q)}+\|f_y(x,t,y_{\bar u})\| _{L^\infty(Q)}\| \rho\|_{L^2(Q)}).
\end{align*}
Under Assumption $(A_1)$, we can proceed as in the proof of Theorem \ref{Ssr} using Lemma \ref{good} and 
\eqref{esti2} in Lemma \ref{Impest}, to infer the existence of constants $\alpha,\kappa_1>0$ such that
\begin{equation*}
\|\bar u-u\|_{L^1(Q)}\le \kappa_1\Big(\|\mathcal L^* \rho \| _{L^{2}(Q)}+\|   \xi\|   _{L^2(Q)}+\| \eta\| _{L^2(Q)}\Big),
\end{equation*}
and by standard estimates the existence of a constant $\hat C>0$ and using \eqref{E2.14.2}
\begin{align*}
&\| y_{\bar u}-y_u\| _{L^2(Q)}+\| p_{\bar u}-p_u\|_{L^2(Q)}\le \hat C\|    y_{\bar u}-y_u\| _{L^2(Q)}\le 2\hat C \|z_{u,u-\bar u}\|_{L^2(Q)}\\
&\le 2\hat C\kappa_1^{\frac{2s}{s+2}}\Big(\| \mathcal L^* \rho  \| _{L^{2}(Q)}+\|\xi\| _{L^2(Q)}+\|\eta\| _{L^2(Q)}\Big)^{\frac{2s}{s+2}},
\end{align*}
for all $u\in \mathcal U$ with $\|y_u-\bar y\|_{L^\infty(Q)}<\alpha$ or $\|u-\bar u \|_{L^1(Q)}<\alpha$ depending on the assumption.
From here on, one can proceed as in the proof of Theorem \ref{Ssr} and define the final constant $\kappa>0$ and the exponent $\theta_0$ accordingly.
Finally, by similar reasoning, under Assumption $(A_2)$ with Lemma \ref{good} and Lemma \ref{Impest}, one obtains the existence of a constant $\kappa>0$ such that
\begin{align*}
&\| y_{\bar u}-y_u\|_{L^2(Q)}+\| p_{\bar u}-p_u\|_{L^2(Q)}\le \kappa\Big(\|\mathcal L^* \rho  \|_{L^{2}(Q)}+\|\xi\| _{L^2(Q)}+\|\eta\|_{L^2(Q)}\Big),
\end{align*}
for all $u\in \mathcal U$ with $\|y_u-\bar y\|_{L^\infty(Q)}<\alpha$ or $\|u-\bar u \|_{L^1(Q)}<\alpha$.
Again, proceeding as in Theorem \ref{Ssr} and increasing the constant $\kappa$ if needed, proves the claim.
\end{proof}

\begin{Remark} \label{Rnonlin}
Theorems \ref{Ssr} and \ref{SsHr} concern  perturbations which are functions of $x$ and $t$ only. On the other hand, 
\cite[Theorem ]{CDK} suggests that SMHSr implies a similar stability property under classes of perturbations that depend 
(in a non-linear way) on the state and control. This fact will be used and demonstrated in the next section.   
\end{Remark}

\section{Stability of the optimal solution} \label{Stability}

In this section we obtain stability results for the optimal solution under non-linear perturbations in the objective functional.
Namely, we consider a disturbed problem
\begin{equation} \label{newequa}
\mbox{\rm (P$_{\zeta}$)}\xspace \  \min_{u \in \Uad} J_\zeta(u) := \int_Q[L(x,t,y(x,t),u(x,t))+ \eta(x,t,y_u(x,t),u(x,t))]\dx\dt,
\end{equation}
subject to
\begin{align} \label{Eyxi}
\left\{\begin{array}{l}
\frac{dy}{dt}+\mathcal A y+f(x,t,y) = u + \xi \  \text{ in }\ Q,\\
y=0 \ \text{ on }\ \Sigma,\ y(\cdot,0)=y_0 \text{ in } \Omega,
\end{array} \right.
\end{align}
where $\zeta := (\xi,\eta)$ is a perturbation. The corresponding solution will be denoted by $y_u^\zeta$. 
In contrast with the previous section, the perturbation $\eta$ may be state and control
dependent. For this reason, here we change the notation of the set of admissible perturbations to $\hat \Gamma$.
However, Assumption \ref{Ape} will still be valid for the set $\hat \Gamma$. 
We also use the notations $C_{pe}$, $K_y$ and $R$ with the same meaning as in Subsection \ref{SmHsr}. 

In addition to Assumption \ref{Ape} we require the following that holds through the reminder of the section.

\begin{Assumption} \label{asupert1} 
The perturbation $\eta \in L^1(Q \times R)$ for every $(\xi,\eta) \in \hat \Gamma$. For a.e. $(x,t) \in Q$ the function 
$\eta(x,t,\cdot,\cdot)$ is of class $C^2$ and is convex with respect to the last argument, $u$. Moreover, 
the functions 
$\frac{\partial \eta}{\partial y}$ and $ \frac{\partial^2\eta}{\partial y^2}$ are bounded on $Q \times R$,
and the second one is continuous in $(y,u) \in R$, uniformly with respect to $(t,x) \in Q$. 
\end{Assumption}

Due to the linearity of \reff{Eyxi} and the convexity of the objective functional \reff{newequa} with respect to $u$, the proof
of the next theorem is standard.

\begin{Theorem}
For perturbations $\zeta \in \hat \Gamma$ satisfying Assumption \ref{asupert1}, 
the perturbed problem $\mbox{\rm (P$_{\zeta}$)}\xspace$ has a global solution.
\end{Theorem}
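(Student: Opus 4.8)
The plan is to apply the direct method of the calculus of variations, the only non-routine point being the passage to the limit in the nonlinear perturbation term $\eta$. First I would check that $J_\zeta$ is bounded below on $\Uad$: since $\Uad$ is bounded in $L^\infty(Q)$, the bound \reff{E3.15} gives a uniform $L^\infty$-bound on the perturbed states $y_u^\zeta$, and the local boundedness of $L_0$ (Assumption \ref{exist.2}), together with $\eta \in L^1(Q\times R)$ and the boundedness of $\partial\eta/\partial y$ from Assumption \ref{asupert1}, shows that $J_\zeta$ is finite and bounded below on $\Uad$. Hence a minimizing sequence $(u_k)\subset\Uad$ exists. Being bounded in $L^\infty(Q)$, it is bounded in $L^r(Q)$, so after passing to a subsequence $u_k\rightharpoonup\bar u$ weakly in $L^r(Q)$; since $\Uad$ is convex and closed it is weakly closed, whence $\bar u\in\Uad$.

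Next I would transfer this weak convergence into strong convergence of states. Because $\xi$ is fixed, $u_k+\xi\rightharpoonup\bar u+\xi$ weakly in $L^r(Q)$, so \reff{semilinweak} in Theorem \ref{estsemeq} yields $\|y_{u_k}^\zeta-y_{\bar u}^\zeta\|_{L^\infty(Q)}\to 0$ (and convergence in $L^2(0,T;H^1_0(\Omega))$). With this in hand the three contributions to $J_\zeta$, namely $\int_Q L_0(x,t,y_u^\zeta)$, $\int_Q (m y_u^\zeta+g)u$ and $\int_Q \eta(x,t,y_u^\zeta,u)$ (recall the structure \reff{genobj}), can be treated separately. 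The first converges by the strong $L^\infty$-convergence of states and continuity of $L_0$; the second converges because $m y_{u_k}^\zeta+g\to m y_{\bar u}^\zeta+g$ strongly in $L^2(Q)$ while $u_k\rightharpoonup\bar u$ weakly in $L^2(Q)$ (recall $r\ge 2$), so the pairing passes to the limit.

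The hard part is the perturbation term $\int_Q\eta(x,t,y_u^\zeta,u)$, where $\eta$ is nonlinear in the weakly converging variable $u$. Here I would split
\[
\int_Q\eta(x,t,y_{u_k}^\zeta,u_k)
=\int_Q\eta(x,t,y_{\bar u}^\zeta,u_k)
+\int_Q\big[\eta(x,t,y_{u_k}^\zeta,u_k)-\eta(x,t,y_{\bar u}^\zeta,u_k)\big].
\]
For the second integral, the mean value theorem and the boundedness of $\partial\eta/\partial y$ (Assumption \ref{asupert1}) bound it by $\mathrm{const.}\,\|y_{u_k}^\zeta-y_{\bar u}^\zeta\|_{L^1(Q)}\to 0$. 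For the first integral, the map $u\mapsto\eta(x,t,y_{\bar u}^\zeta,u)$ is convex (Assumption \ref{asupert1}), so the associated integral functional is weakly lower semicontinuous on $L^r(Q)$ by the classical lower-semicontinuity result for convex integrands, giving $\liminf_k\int_Q\eta(x,t,y_{\bar u}^\zeta,u_k)\ge\int_Q\eta(x,t,y_{\bar u}^\zeta,\bar u)$. Combining the three contributions yields $\liminf_k J_\zeta(u_k)\ge J_\zeta(\bar u)$, and since $(u_k)$ is minimizing and $\bar u\in\Uad$, this forces $\bar u$ to be a global minimizer. The only delicate verification is the integrand qualification (measurability and lower boundedness) needed to invoke the convex lower-semicontinuity result, which is supplied by $\eta\in L^1(Q\times R)$ and the boundedness assumptions in Assumption \ref{asupert1}.
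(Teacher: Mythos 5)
Your proof is correct and is exactly the ``standard'' direct-method argument the paper invokes without writing out: the paper justifies the theorem solely by the affine dependence of the state equation \reff{Eyxi} on the control and the convexity of the objective in $u$, which are precisely the two facts your argument exploits (weak closedness of $\Uad$, strong convergence of states via \reff{semilinweak}, and weak lower semicontinuity of the convex-in-$u$ integrand). No discrepancy to report.
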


In the next two theorems, we consider sequences of problems $\{\mbox{\rm (P$_{\zeta_k}$)}\}$ with
$\zeta_k \in \hat \Gamma$. The proofs repeat the arguments in \cite[Theorem~4.2, Theorem~4.3]{CDJ2022}.

\begin{Theorem} \label{weaklimit}
Let  a sequence $\{\zeta_k \in \hat \Gamma\}_k$ converge to zero in $L^2(Q) \times L^2(Q \times R)$ and let
$u_k$ be a local solution of problem ($P_{\zeta_k}$), $k = 1, \, 2, \ldots$. 
Then any control $\bar u$ that is a weak* limit in $L^\infty(Q)$ of this sequence
is a week local minimizer in problem (P), and for the corresponding solutions
it holds that $y_{u_k} \to \bar y$ in $L^2(0,T\text{; }H^1_0(\Omega))\cap L^\infty(Q)$. 
\end{Theorem}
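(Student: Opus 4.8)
The plan is to run a standard lower-semicontinuity / $\Gamma$-convergence argument, exploiting that the control enters affinely and that the states converge strongly. First I would observe that $\Uad$ is convex, bounded and closed in $L^\infty(Q)$, hence weak$^*$ closed, so the weak$^*$ limit $\bar u$ belongs to $\Uad$. Since the $u_k$ are uniformly bounded in $L^\infty(Q)$ and $L^{r'}(Q)\subset L^1(Q)$ on the bounded set $Q$, weak$^*$ convergence in $L^\infty(Q)$ upgrades to weak convergence $u_k\rightharpoonup\bar u$ in $L^r(Q)$. Then Theorem \ref{estsemeq}, specifically \reff{semilinweak}, yields at once the claimed strong convergence $y_{u_k}\to\bar y$ in $L^2(0,T;H^1_0(\Omega))\cap L^\infty(Q)$ of the unperturbed states. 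For the perturbed states I would invoke Lemma \ref{T4.1}: \reff{E4.3.2} gives $\|y_{u_k}^{\zeta_k}-y_{u_k}\|_{L^2(Q)}\le\hat C\|\xi_k\|_{L^2(Q)}\to 0$, and together with the uniform bound \reff{E3.15} and interpolation this gives $y_{u_k}^{\zeta_k}\to\bar y$ strongly in every $L^q(Q)$, $q<\infty$, and in $L^2(0,T;H^1_0(\Omega))$.

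Second, I would establish convergence of objective values. For fixed $v\in\Uad$ one has $J_{\zeta_k}(v)\to J(v)$: the term $\int_Q L(x,t,y_v^{\zeta_k},v)\dx\dt$ converges by strong state convergence and the local boundedness/Lipschitz properties of $L_0$ from Assumption \ref{exist.2}, while the perturbation term $\int_Q\eta_k(x,t,y_v^{\zeta_k},v)\dx\dt$ tends to zero because $\eta_k\to 0$ in $L^2(Q\times R)$ and the states stay in $R$, using the uniform bounds of Assumption \ref{asupert1}. The crucial point, where the affine structure \reff{genobj} enters, is that along the minimizers one obtains an \emph{equality} $J_{\zeta_k}(u_k)\to J(\bar u)$, not merely an inequality: the state-nonlinear part $\int_Q L_0(x,t,y_{u_k}^{\zeta_k})\dx\dt$ converges by strong state convergence, the control-linear part $\int_Q(my_{u_k}^{\zeta_k}+g)u_k\dx\dt$ converges because its coefficient converges strongly in $L^{r'}(Q)$ while $u_k\rightharpoonup\bar u$ weakly in $L^r(Q)$, and the $\eta_k$-part vanishes as before.

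Third comes the passage of optimality. If the $u_k$ were global solutions the conclusion would be immediate: for every $v\in\Uad$, $J(\bar u)=\lim_k J_{\zeta_k}(u_k)\le\lim_k J_{\zeta_k}(v)=J(v)$, so $\bar u$ is globally, hence weakly locally, optimal. To handle genuinely local solutions I would localize in the state rather than in the control: for small $\rho>0$ consider the auxiliary problems obtained from (P$_{\zeta_k}$) by adding the constraint $\|y_u^{\zeta_k}-\bar y\|_{L^2(Q)}\le\rho$. These restricted problems admit global solutions, the first two steps apply to them, and since $y_{u_k}^{\zeta_k}\to\bar y$ the control $u_k$ eventually satisfies this constraint strictly; combining the local minimality of $u_k$ for (P$_{\zeta_k}$) with the limit of the restricted problems then yields $J(\bar u)\le J(u)$ for all admissible $u$ in a fixed neighborhood, i.e. weak local optimality of $\bar u$ in \Pb. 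This mirrors \cite[Theorem~4.2, Theorem~4.3]{CDJ2022}.

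The main obstacle is exactly this last passage. Because the controls converge only weak$^*$ and not strongly, one cannot localize directly in the control norm: $\|u_k-\bar u\|_{L^r(Q)}$ need not be small, so $u_k$ need not lie in a control ball around $\bar u$, and a naive comparison of $u_k$ against competitors near $\bar u$ fails. The resolution is to transfer the localization to the state, where strong convergence is available, and to rely on the affine dependence of the objective on the control (Assumption \ref{asupert1} and \reff{genobj}) so that the weak limit of the control-linear term is computed exactly. Everything else is routine continuity bookkeeping under Assumptions \ref{exist.2} and \ref{asupert1}.
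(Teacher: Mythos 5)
Your first two steps are sound and are essentially the standard template the paper points to (the paper gives no proof of Theorem \ref{weaklimit}, only the reference to the elliptic analogue): weak* closedness of $\Uad$, the upgrade to weak convergence in $L^r(Q)$, the strong state convergence from \reff{semilinweak}, and the exact limit $J_{\zeta_k}(u_k)\to J(\bar u)$ obtained by pairing the weakly convergent $u_k$ with the strongly convergent coefficient $my^{\zeta_k}_{u_k}+g$ are all correct. Two small inaccuracies: the $L^2(0,T;H^1_0(\Omega))$ convergence of the perturbed states follows from \reff{wl2} applied to the difference equation, not from interpolation between $L^2(Q)$ and $L^\infty(Q)$; and $L^\infty(Q)$ convergence of the \emph{perturbed} states would require $\xi_k\to 0$ in $L^r(Q)$, which is not assumed --- but since the theorem speaks of $y_{u_k}$, this does not hurt you.

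The genuine gap is in your third step, which is the crux of the statement. Local minimality of $u_k$ for (P$_{\zeta_k}$) only yields $J_{\zeta_k}(u_k)\le J_{\zeta_k}(u)$ for competitors $u$ in some ball around $u_k$ \emph{in a control norm}, of radius $\varepsilon_k$ that you do not control; and because $u_k\stackrel{*}{\rightharpoonup}\bar u$ only weakly*, neither a control $u$ near $\bar u$ nor a global solution $\tilde u_k$ of your state-restricted problem has any reason to lie in that ball. The observation that $u_k$ eventually satisfies $\|y^{\zeta_k}_{u}-\bar y\|_{L^2(Q)}\le\rho$ strictly only shows that $u_k$ remains a \emph{local} solution of the restricted problem, and the one inequality you can extract from global optimality of $\tilde u_k$, namely $J_{\zeta_k}(\tilde u_k)\le J_{\zeta_k}(u_k)$, points in the wrong direction. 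Hence the chain $J(\bar u)=\lim_k J_{\zeta_k}(u_k)\le \lim_k J_{\zeta_k}(v)=J(v)$ is never established for local solutions, and the concluding sentence of your third paragraph asserts the result rather than derives it. To close the argument you need some device that makes the comparison neighbourhoods uniform: either the $u_k$ are global solutions (then your second paragraph already finishes the proof, with global optimality of $\bar u$), or the radii of local optimality are bounded below, or one first proves strong $L^1(Q)$ convergence $u_k\to\bar u$ so that neighbourhoods of $\bar u$ are eventually contained in neighbourhoods of $u_k$ --- none of which your sketch supplies. Note that the companion statement, Theorem \ref{exsequ}, is proved in the opposite direction precisely by taking \emph{global} solutions of problems localized around the reference control $\bar u$; that is the mechanism your state-localization is implicitly trying to imitate, but it cannot be run starting from arbitrary local solutions $u_k$ whose neighbourhoods of optimality are unrelated to $\bar u$.
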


\begin{Theorem} \label{exsequ}
Let  $\{\zeta_k\}_k$ be as in Theorem \ref{weaklimit}.
Let $\bar u$ be a strict strong local minimizer of \Pb. Then there exists  a sequence of strong local 
minimizers $\{u_k\}$ of problems ($P_{\zeta_k}$) such that 
$u_k \stackrel{*}{\rightharpoonup} \bar u$ in $L^\infty(Q)$ and $y_{u_k}$  converges
strongly in $L^2(0,T\text{; }H^1_0(\Omega))\cap L^\infty(Q)$.
\end{Theorem}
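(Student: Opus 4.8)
The plan is to follow the localization strategy of \cite[Theorem~4.3]{CDJ2022}: turn each perturbed problem into an auxiliary problem carrying an explicit state constraint around $\bar y$, solve the auxiliary problems by the direct method, identify the weak$^*$ limit of the auxiliary minimizers as $\bar u$, and finally show that the constraint becomes inactive so that these minimizers are genuine strong local minimizers of $(P_{\zeta_k})$. First I would exploit strictness: since $\bar u$ is a strict strong local minimizer, fix $\varepsilon>0$ such that $\bar u$ is the \emph{unique} global minimizer of $J$ over $A_\varepsilon := \{u\in\Uad : \|y_u-y_{\bar u}\|_{L^\infty(Q)}\le\varepsilon\}$. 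For each $k$ I introduce the localized problem of minimizing $J_{\zeta_k}$ over $A^k_\varepsilon := \{u\in\Uad : \|y^{\zeta_k}_u-y_{\bar u}\|_{L^\infty(Q)}\le\varepsilon\}$, where the constraint is placed on the \emph{perturbed} state (the state of $(P_{\zeta_k})$ is $y_u^{\zeta_k}$), since this is what is needed in the last step. A global minimizer $u_k$ exists by the direct method: $A^k_\varepsilon$ is bounded in $L^\infty(Q)$ and weakly$^*$ closed by \reff{semilinweak} applied to \reff{Eyxi} (for fixed $k$), and $J_{\zeta_k}$ is weakly$^*$ continuous there by Assumption \ref{asupert1}.

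The key device is an interpolation observation upgrading data convergence to uniform state convergence. Because $\|\xi_k\|_{L^r(Q)}\le C_{pe}$ by Assumption \ref{Ape} while $\xi_k\to0$ in $L^2(Q)$, and because $r>1+\frac n2$ is a \emph{strict} inequality, interpolation yields $\xi_k\to0$ in $L^q(Q)$ for some fixed $q\in(1+\frac n2,\,r)$; then \reff{clr} in Theorem \ref{estsemeq} gives $\|y^{\zeta_k}_u-y_u\|_{L^\infty(Q)}\le C_q\|\xi_k\|_{L^q(Q)}\to0$ \emph{uniformly} in $u\in\Uad$. In particular $y^{\zeta_k}_{\bar u}\to y_{\bar u}$ in $L^\infty(Q)$, so $\bar u\in A^k_\varepsilon$ for all large $k$ (whence nonemptiness), and for any weak$^*$ limit $\tilde u$ of a subsequence of $(u_k)$ one has $y^{\zeta_k}_{u_k}=y_{u_k}+(y^{\zeta_k}_{u_k}-y_{u_k})\to y_{\tilde u}$ in $L^\infty(Q)$ (first summand by \reff{semilinweak}, second uniformly by interpolation), so that $\tilde u\in A_\varepsilon$.

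Next I would identify the limit. From $u_k,\bar u\in A^k_\varepsilon$ and optimality, $J_{\zeta_k}(u_k)\le J_{\zeta_k}(\bar u)$. Passing to the limit, the affine-in-control structure makes $J$ \emph{continuous} along the sequence rather than merely lower semicontinuous: the state-dependent part $\int_Q L_0(x,t,y^{\zeta_k}_{u_k})$ converges by $L^\infty$ state convergence and dominated convergence, the part $\int_Q(my^{\zeta_k}_{u_k}+g)u_k$ converges as a strong$\times$weak$^*$ product, and the nonlinear perturbation $\int_Q\eta_k(x,t,y^{\zeta_k}_{u_k},u_k)$ vanishes using the bounds on $\partial\eta/\partial y$ and $\partial^2\eta/\partial y^2$ from Assumption \ref{asupert1} together with $\eta_k\to0$; the same computation with $u_k\equiv\bar u$ gives $J_{\zeta_k}(\bar u)\to J(\bar u)$. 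Hence $J(\tilde u)\le J(\bar u)$ with $\tilde u\in A_\varepsilon$, and uniqueness of the minimizer over $A_\varepsilon$ forces $\tilde u=\bar u$; as the limit is independent of the subsequence, the whole sequence satisfies $u_k\stackrel{*}{\rightharpoonup}\bar u$, and the strong convergence $y_{u_k}\to y_{\bar u}$ in $L^2(0,T;H^1_0(\Omega))\cap L^\infty(Q)$ is \reff{semilinweak}. Finally I would remove the artificial constraint: since $y^{\zeta_k}_{u_k}\to y_{\bar u}$ in $L^\infty(Q)$, for all large $k$ one has $\|y^{\zeta_k}_{u_k}-y_{\bar u}\|_{L^\infty(Q)}<\varepsilon$, so every $u$ with $\|y^{\zeta_k}_u-y^{\zeta_k}_{u_k}\|_{L^\infty(Q)}$ sufficiently small still lies in $A^k_\varepsilon$, and optimality of $u_k$ there shows $u_k$ is a strong local minimizer of the full problem $(P_{\zeta_k})$.

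I expect the main obstacle to be the limit passage in the objective. Two points require care: obtaining uniform ($L^\infty$) state convergence from only $L^2$ convergence of the data $\xi_k$, which is precisely what the strict inequality $r>1+\frac n2$ and the interpolation step deliver and which is also what makes $\bar u$ feasible for the perturbed-state constraint; and controlling the state/control-dependent term $\int_Q\eta_k$ via Assumption \ref{asupert1}. The affine structure of the problem is essential, since it upgrades the usual weak lower semicontinuity of $J$ to the continuity needed to compare $J(\tilde u)$ with $J(\bar u)$ and thereby invoke uniqueness of the strict minimizer.
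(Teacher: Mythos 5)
Your proposal is correct and is essentially the paper's own argument: the paper proves this theorem only by the remark that ``the proofs repeat the arguments in [CDJ2022, Theorem 4.2, Theorem 4.3]'', and what you write out — the auxiliary problems localized by the perturbed-state constraint $\|y^{\zeta_k}_u-\bar y\|_{L^\infty(Q)}\le\varepsilon$, the direct method, identification of the weak$^*$ limit via strictness, and deactivation of the constraint — is exactly that localization scheme, with the interpolation step (using $\|\xi_k\|_{L^r}\le C_{pe}$ and $\xi_k\to0$ in $L^2$ to get uniform $L^\infty$ state convergence) a correct and necessary adaptation to the parabolic setting. The only blemish is cosmetic: the estimate \eqref{clr} you invoke lives in Theorem \ref{mainex}, not Theorem \ref{estsemeq}.
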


\noindent
The next theorem is central in this section.

\begin{Theorem} \label{Tstab}
Let assumption \ref{A4}(A$_0$) be fulfilled for the reference weakly optimal control $\bar u$ in problem (P)
and the corresponding $\bar y$ and $\bar p$.
Then there exist positive numbers $\alpha $ and $C$ for which the following is fulfilled. 
For every perturbation $\zeta \in \hat\Gamma$
and for every weak local solution $u_\zeta$ of problem ($P_\zeta$) with $\| u_\zeta - \bar u \|_{L^1(Q)} \leq \alpha$,
the following estimates hold: 
\begin{enumerate}
\item If $m=0$ in \reff{genobj}:
\begin{align*}
     \|\bar u-u_\zeta\|_{L^1(Q)} & \le C\Big[ \|\xi\|_{L^2(Q)} +\| \|\frac{d}{dy}\eta \|_{L^\infty(R)} \|_{L^2(Q) }+ \|\frac{d}{du}\eta\|_{L^{\infty}(Q \times R)}\Big]^{\theta_0},
\end{align*}
\begin{align*}
   \|\bar y-y_{u_\zeta} \|_{L^2(Q)} 
      & \le C\Big[ \|\xi\|_{L^2(Q)} +\| \|\frac{d}{dy}\eta \|_{L^\infty(R)} \|_{L^2(Q) } + \|\frac{d}{du}\eta\|_{L^{\infty}(Q \times R)}\Big]^{\theta}.
\end{align*}
\item For $m\in \mathbb R$:
\begin{align*}
     \|\bar u-u_\zeta\|_{L^1(Q)}&\le C\Big[ \|\xi\|_{L^r(Q)} +\| \|\frac{d}{dy}\eta \|_{L^\infty(R)} \|_{L^r(Q) } + \|\frac{d}{du}\eta\|_{L^{\infty}(Q \times R)}\Big],
\end{align*}
\begin{align*}
   \|\bar y-y_{u_\zeta} \|_{L^2(Q)} 
      & \le C\Big[ \|\xi\|_{L^r(Q)} +\| \|\frac{d}{dy}\eta \|_{L^\infty(R)} \|_{L^r(Q) }+ \|\frac{d}{du}\eta\|_{L^{\infty}(Q \times R)}\Big]^{\theta_0}.
\end{align*}
\end{enumerate}
Here $\theta_0$ and $\theta$ are defined as in Theorem \ref{Ssr}.
\end{Theorem}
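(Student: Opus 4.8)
The plan is to deduce both cases from the strong metric Hölder subregularity of the optimality mapping $\Phi$ established in Theorem \ref{Ssr}, together with the observation recorded in Remark \ref{Rnonlin} that this property survives state- and control-dependent perturbations. The key point is that the first order system for a weak local minimizer of $\mbox{\rm (P$_{\zeta}$)}\xspace$ is nothing but an inclusion $\hat\zeta\in\Phi(\psi)$ for an \emph{effective} perturbation $\hat\zeta\in\mathcal Z$ assembled from $\xi$ and from the partial derivatives of $\eta$ evaluated along the perturbed solution. Since the subregularity estimate of Theorem \ref{Ssr} holds with a constant $\kappa_n$ that is uniform over all $\zeta\in\Gamma$ whose control component is close to $\bar u$, the fact that $\hat\zeta$ depends on the (unknown) solution is harmless: no fixed-point or implicit-function argument is needed.

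First I would write the Pontryagin necessary conditions for a weak local solution $u_\zeta$ of $\mbox{\rm (P$_{\zeta}$)}\xspace$, now with the extra running cost $\eta$. Denoting by $y^\zeta_{u_\zeta}$ the state solving \reff{Eyxi} and by $p^\zeta_{u_\zeta}\in W(0,T)\cap C(\bar Q)$ the adjoint state (well defined because $\frac{\partial\eta}{\partial y}$ is bounded by Assumption \ref{asupert1}, so Theorem \ref{mainex} applies), the triple $\psi:=(y^\zeta_{u_\zeta},p^\zeta_{u_\zeta},u_\zeta)$ satisfies the perturbed system \reff{s1per} with
\begin{equation*}
\hat\xi=\xi,\qquad \hat\eta=\frac{\partial\eta}{\partial y}(\cdot,\cdot,y^\zeta_{u_\zeta},u_\zeta),\qquad \hat\rho=-\frac{\partial\eta}{\partial u}(\cdot,\cdot,y^\zeta_{u_\zeta},u_\zeta),
\end{equation*}
that is, $\hat\zeta:=(\hat\xi,\hat\eta,\hat\rho)\in\Phi(\psi)$. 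Because $(y^\zeta_{u_\zeta}(x,t),u_\zeta(x,t))\in R$ for a.e.\ $(x,t)$, the last two components are genuine functions of $(x,t)$ obeying $\|\hat\rho\|_{L^\infty(Q)}\le\|\frac{\partial\eta}{\partial u}\|_{L^\infty(Q\times R)}$ and $\|\hat\eta\|_{L^2(Q)}\le\|\,\|\frac{\partial\eta}{\partial y}\|_{L^\infty(R)}\,\|_{L^2(Q)}$ (with $L^r$ in place of $L^2$ when $m\ne0$). Moreover $\hat\xi=\xi$ inherits the bound $\|\xi\|_{L^r(Q)}\le C_{pe}$ from Assumption \ref{Ape} for $\hat\Gamma$, hence $\hat\zeta\in\Gamma$.

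Then I would invoke Theorem \ref{Ssr} for this $\hat\zeta$, shrinking $\alpha$ to $\alpha\le\alpha_n$ so that $\|u_\zeta-\bar u\|_{L^1(Q)}\le\alpha$ places $\psi$ in the region where subregularity is valid. In the case $m=0$, estimate \reff{EEu} applied to $\hat\zeta$ gives $\|\bar u-u_\zeta\|_{L^1(Q)}\le\kappa_n(\|\hat\rho\|_{L^\infty(Q)}+\|\xi\|_{L^2(Q)}+\|\hat\eta\|_{L^2(Q)})^{\theta_0}$, and inserting the three bounds above yields the stated control estimate; the state estimate follows verbatim from \reff{EEy} with exponent $\theta$. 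The case $m\ne0$ is identical, using \reff{EEum}--\reff{EEym} and the $L^r$ bounds on $\hat\eta$. The one genuinely non-routine point, which I would argue most carefully, is the legitimacy of substituting a solution-dependent perturbation into a subregularity estimate; this is precisely the mechanism of Remark \ref{Rnonlin}, and it works because $\kappa_n$ is independent of $\hat\zeta$. A secondary verification is that $\hat\eta$ and $\hat\rho$ lie in the spaces demanded by \reff{s1per}, which the boundedness hypotheses of Assumption \ref{asupert1} guarantee.
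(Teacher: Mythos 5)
Your proposal is correct and follows essentially the same route as the paper: both proofs recast the first order conditions for a weak local solution of $\mbox{\rm (P$_{\zeta}$)}\xspace$ as the inclusion $\hat\zeta\in\Phi(\psi)$ with the effective perturbation $(\xi,\ \frac{\partial\eta}{\partial y}(\cdot,y_{u_\zeta},u_\zeta),\ -\frac{\partial\eta}{\partial u}(\cdot,y_{u_\zeta},u_\zeta))$ and then invoke Theorem \ref{Ssr}, exactly as indicated in Remark \ref{Rnonlin}. Your additional checks (the norm bounds on the effective perturbation over $Q\times R$, membership in $\Gamma$, and the uniformity of $\kappa_n$ that makes the solution-dependence of $\hat\zeta$ harmless) are exactly the points the paper's terse proof leaves implicit.
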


\begin{proof}
The local solution $(\bar y, \bar u)$ satisfies, together with the corresponding adjoint variable, the relations
\reff{s6}. Similarly,  $(y_{u_\zeta}, u_\zeta)$ satisfies, together with the corresponding $p_{u_\zeta}$
the perturbed optimality system \reff{s1per} with the left-hand side given by the triple
\begin{align}
\left( \begin{array}{cll}
\xi(\cdot) \\
\frac{d}{dy}(\eta(\cdot,y_{u_\zeta}(\cdot), u_\zeta(\cdot)) \\
\frac{d}{du}(\eta(\cdot,y_{u_\zeta}(\cdot), u_\zeta(\cdot)).
\end{array} \right)
\end{align}
Since it is assumed that $\| u_\zeta - \bar u \|_{L^1(Q)} \leq \alpha$ 
we may apply Theorem \ref{Ssr}
(here we choose the same $\alpha$ as in this theorem) to prove the inequalities in the theorem.
\end{proof}

The proof of theorems \ref{coro1} and \ref{coro2} follows in the same spirit but using Theorem \ref{SsHr} 
instead of Theorem \ref{Ssr}.
We make an additional assumption for the perturbation $\eta$ in the objective functional, namely, that 
$\rho := \frac{d}{du}(\eta(\cdot,y_{u_\zeta}(\cdot), u_\zeta(\cdot))$ satisfies \eqref{perturbeq}, i.e.
\begin{equation}
\frac{d}{du}(\eta(\cdot,y_{u_\zeta}(\cdot), u_\zeta(\cdot))\in D(\mathcal L^*).
\label{crp}
\end{equation}
For an explanation of the condition \eqref{crp}, we refer to the proof of Theorem \ref{SsHr}.

\begin{Theorem}
Let $m=0$ and Assumption \ref{A4}(A$_1$) be fulfilled for the reference strongly optimal control $\bar u$ in problem (P).
Then there exist positive numbers $\alpha $ and $C$ for which the following is fulfilled. For every perturbation 
$\zeta \in \hat\Gamma$
and for every local solution $u_\zeta$ of problem ($P_\zeta$) with $\| y_{u_\zeta} - \bar y \|_{L^\infty(Q)} \leq \alpha$,
the following estimates hold.
\begin{align*}
&\|\bar u-u_\zeta \|_{L^1(Q)}\le C\Big(\|\mathcal L^* \frac{d}{du}(\eta(\cdot,y_{u_\zeta}(\cdot), u_\zeta(\cdot)) \|_{L^{2}(Q)}+\|\xi\| _{L^2(Q)}+\| \|\frac{d}{dy}\eta \|_{L^\infty(R)} \|_{L^2(Q) }\Big)
\end{align*}
and all together
\begin{align*}
&\| \bar y-y_{u_\zeta}\|_{L^2(Q)}\le C\Big(\|\mathcal L^* \frac{d}{du}(\eta(\cdot,y_{u_\zeta}(\cdot), u_\zeta(\cdot))\| _{L^{2}(Q)}+\| \xi\| _{L^2(Q)}+\| \|\frac{d}{dy}\eta \|_{L^\infty(R)} \|_{L^2(Q) }\Big)^{\theta_0},
\end{align*}
where $\theta_0$ is defined in Theorem \ref{Ssr}.
\label{coro1}
\end{Theorem}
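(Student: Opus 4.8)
The plan is to recognize the optimal triple of the perturbed problem as a solution of the perturbed optimality system \reff{s1per} and then invoke the strong metric H\"older subregularity established in Theorem \ref{SsHr}, in exactly the way Theorem \ref{Tstab} exploited Theorem \ref{Ssr}. First I would write the first order conditions for ($P_\zeta$): the state $y_{u_\zeta}$ solves \reff{Eyxi}, and with the adjoint state $p_{u_\zeta}$ associated to the augmented objective $L+\eta$ one checks that $(y_{u_\zeta},p_{u_\zeta},u_\zeta)$ satisfies \reff{s1per} with the perturbation triple whose three components are $\xi$, $\frac{d}{dy}\eta(\cdot,y_{u_\zeta}(\cdot),u_\zeta(\cdot))$ and $\frac{d}{du}\eta(\cdot,y_{u_\zeta}(\cdot),u_\zeta(\cdot))$. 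As noted in Remark \ref{Rnonlin}, although $\eta$ depends on the state and control, once these derivatives are frozen at the particular pair $(y_{u_\zeta},u_\zeta)$ they become functions of $(x,t)$ only, so the subregularity estimates become applicable to this frozen perturbation.

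Next I would verify that the frozen perturbation meets the hypotheses of Theorem \ref{SsHr} in the case ($A_1$). Admissibility in the sense of Assumption \ref{Ape} is inherited from $\zeta\in\hat\Gamma$, which bounds $\|\xi\|_{L^r(Q)}$; the structural requirement \eqref{crp} guarantees that $\rho:=\frac{d}{du}\eta(\cdot,y_{u_\zeta},u_\zeta)\in D(\mathcal L^*)$, which is precisely what allows Theorem \ref{SsHr} to trade the dependence on $\rho$ for dependence on $\mathcal L^*\rho$ (this is the computation at the start of the proof of Theorem \ref{SsHr}). Since the standing hypothesis $\|y_{u_\zeta}-\bar y\|_{L^\infty(Q)}\le\alpha$ matches the neighborhood in which Assumption \ref{A4}($A_1$) is posed, choosing $\alpha$ equal to the radius $\alpha_n$ furnished by Theorem \ref{SsHr} lets me apply its first conclusion directly, yielding both asserted inequalities with the perturbation entering through $\|\xi\|_{L^2(Q)}$, $\big\|\frac{d}{dy}\eta(\cdot,y_{u_\zeta},u_\zeta)\big\|_{L^2(Q)}$ and $\|\mathcal L^*\rho\|_{L^2(Q)}$, and with $C:=\kappa_n$.

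The last step is to express the frozen adjoint perturbation through the double norm appearing in the statement. By \eqref{E3.15} one has $(y_{u_\zeta}(x,t),u_\zeta(x,t))\in R$ for almost every $(x,t)$, whence the pointwise bound
\begin{align*}
\Big|\frac{d}{dy}\eta(x,t,y_{u_\zeta}(x,t),u_\zeta(x,t))\Big|\le\Big\|\frac{d}{dy}\eta(x,t,\cdot,\cdot)\Big\|_{L^\infty(R)}
\end{align*}
holds; squaring and integrating over $Q$ gives $\big\|\frac{d}{dy}\eta(\cdot,y_{u_\zeta},u_\zeta)\big\|_{L^2(Q)}\le\big\|\,\|\frac{d}{dy}\eta\|_{L^\infty(R)}\,\big\|_{L^2(Q)}$, which is the middle term on the right-hand side of both claimed estimates. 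Substituting this bound while keeping $\|\mathcal L^*\rho\|_{L^2(Q)}$ and $\|\xi\|_{L^2(Q)}$ unchanged produces the two inequalities of the theorem.

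I expect the main obstacle to be the careful justification that the frozen perturbation genuinely lies in the admissible class demanded by Theorem \ref{SsHr}: specifically, that \eqref{crp} supplies the regularity $\rho\in D(\mathcal L^*)$ used there, and that the uniform bounds on $\frac{\partial\eta}{\partial y}$ from Assumption \ref{asupert1} over the range $R$ control the $L^2(Q)$ norm of the adjoint perturbation independently of the a priori unknown $u_\zeta$. Once these points are settled, the rest is a direct transcription of the argument used for Theorem \ref{Tstab}, with Theorem \ref{SsHr} replacing Theorem \ref{Ssr}.
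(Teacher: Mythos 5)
Your proposal is correct and follows exactly the route the paper takes: the paper's own proof of Theorem \ref{coro1} is a one-line reference stating that it "follows in the same spirit" as Theorem \ref{Tstab} but with Theorem \ref{SsHr} replacing Theorem \ref{Ssr}, i.e.\ recognize $(y_{u_\zeta},p_{u_\zeta},u_\zeta)$ as a solution of \reff{s1per} with the frozen perturbation triple, use \eqref{crp} to place $\rho=\frac{d}{du}\eta(\cdot,y_{u_\zeta},u_\zeta)$ in $D(\mathcal L^*)$, and invoke the $(A_1)$ case of Theorem \ref{SsHr}. Your additional step bounding $\|\frac{d}{dy}\eta(\cdot,y_{u_\zeta},u_\zeta)\|_{L^2(Q)}$ by the iterated norm via \eqref{E3.15} is exactly the implicit justification the paper relies on, so no further comparison is needed.
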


\begin{Theorem}
Let $m=0$ and let Assumption \ref{A4}(A$_2$) be fulfilled for the reference strongly optimal control $\bar u$ in problem (P).
Then there exist positive numbers $\alpha $ and $C$ for which the following is fulfilled. For every perturbation 
$\zeta \in \hat\Gamma$
and for every local solution $u_\zeta$ of problem ($P_\zeta$) with $\| y_{u_\zeta} - \bar u \|_{L^\infty(Q)} \leq \alpha$,
the following estimates hold: 
\begin{align*}
&\|\bar y-y_{u_\zeta}\|_{L^2(Q)}\le C\Big(\|\mathcal L^* \frac{d}{du}(\eta(\cdot,y_{u_\zeta}(\cdot), u_\zeta(\cdot)) \|_{L^{2}(Q)}+\|\xi\|_{L^2(Q)}+\| \|\frac{d}{dy}\eta \|_{L^\infty(R)} \|_{L^2(Q) }\Big).
\end{align*}
\label{coro2}
\end{Theorem}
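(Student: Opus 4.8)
The plan is to reduce the whole statement to the abstract subregularity estimate of Theorem \ref{SsHr}, by the same device used in the proof of Theorem \ref{Tstab} and anticipated in Remark \ref{Rnonlin}: the nonlinear, state- and control-dependent perturbation $\eta$ is frozen along the (a priori unknown) perturbed solution and thereby turned into a fixed $L^2(Q)$-perturbation of the optimality system, after which Theorem \ref{SsHr} applies verbatim. Since the hypothesis here is \ref{A4}$(A_2)$, I would invoke the \emph{second} assertion of Theorem \ref{SsHr} (the $(A_2)$ case), which is the one giving an exponent-one estimate for the state, and that is exactly the inequality claimed.

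First I would record the two optimality systems. The reference triple $(\bar y,\bar p,\bar u)$ solves the unperturbed system \eqref{s6}, i.e. $0\in\Phi(\bar\psi)$. Writing the first order conditions for the local solution $u_\zeta$ of $\mbox{\rm (P$_{\zeta}$)}\xspace$, together with its state $y_{u_\zeta}$ and adjoint $p_{u_\zeta}$, and using that the extra term $\eta$ enters the adjoint equation only through $\partial\eta/\partial y$ and the variational inequality only through $\partial\eta/\partial u$, I obtain that $(y_{u_\zeta},p_{u_\zeta},u_\zeta)$ solves the perturbed system \eqref{s1per} with right-hand side triple $\zeta':=\big(\xi,\ \frac{d}{dy}\eta(\cdot,y_{u_\zeta},u_\zeta),\ \frac{d}{du}\eta(\cdot,y_{u_\zeta},u_\zeta)\big)$, precisely the triple appearing in the proof of Theorem \ref{Tstab}.

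Next I would verify that $\zeta'$ is admissible for Theorem \ref{SsHr}, case $(A_2)$, and translate the three perturbation norms. The component $\xi$ is controlled in $L^r(Q)$ by Assumption \ref{Ape}, so it lies in $\Gamma$, and its $L^2(Q)$-norm enters directly. By the boundedness of $\partial\eta/\partial y$ on $Q\times R$ from Assumption \ref{asupert1}, the adjoint component is majorized pointwise by $\|\partial\eta/\partial y(x,t,\cdot,\cdot)\|_{L^\infty(R)}$, hence $\|\frac{d}{dy}\eta(\cdot,y_{u_\zeta},u_\zeta)\|_{L^2(Q)}\le \big\|\,\|\frac{d}{dy}\eta\|_{L^\infty(R)}\big\|_{L^2(Q)}$, the second term on the right. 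Finally, the extra hypothesis \eqref{crp} guarantees $\rho:=\frac{d}{du}\eta(\cdot,y_{u_\zeta},u_\zeta)\in D(\mathcal L^*)$, so that $\mathcal L^*\rho\in L^2(Q)$ and the third term $\|\mathcal L^*\frac{d}{du}\eta(\cdot,y_{u_\zeta},u_\zeta)\|_{L^2(Q)}$ is well defined; this is exactly how Theorem \ref{SsHr} measures the variational-inequality perturbation in the $(A_2)$ case. With $\|y_{u_\zeta}-\bar y\|_{L^\infty(Q)}\le\alpha$, I would set $\alpha:=\alpha_n$ and $C:=\kappa_n$ from that theorem and apply its second assertion to $(\bar\psi,\zeta')$, then discard the adjoint term on the left and insert the three norm bounds to reach the stated inequality.

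The step requiring the most care is the reduction itself: confirming that $u_\zeta$ genuinely satisfies \eqref{s1per} with the frozen triple $\zeta'$ and that $\zeta'$ belongs to the \emph{restricted} perturbation set allowed in Theorem \ref{SsHr}. The role of \eqref{crp} is decisive here — without $\rho\in D(\mathcal L^*)$ the control-perturbation would only be measurable in $L^\infty(Q)$, as in Theorem \ref{Ssr}, and the sharp exponent-one state estimate characteristic of $(A_2)$ would be lost; it is precisely the regularity gained by integrating $\mathcal L^*\rho$ against $z_{\bar u,u-\bar u}$ (the computation opening the proof of Theorem \ref{SsHr}) that upgrades the exponent to one. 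Everything else is a direct transcription of the argument of Theorem \ref{Tstab}, with Theorem \ref{SsHr} in place of Theorem \ref{Ssr}.
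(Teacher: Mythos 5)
Your proposal is correct and follows essentially the same route as the paper: the paper proves Theorem \ref{coro2} exactly by repeating the reduction used in Theorem \ref{Tstab} (freezing the nonlinear perturbation along $u_\zeta$ to obtain the frozen triple solving \eqref{s1per}) and then invoking the second assertion of Theorem \ref{SsHr} in place of Theorem \ref{Ssr}, with \eqref{crp} supplying the membership $\frac{d}{du}\eta(\cdot,y_{u_\zeta},u_\zeta)\in D(\mathcal L^*)$ needed for the restricted perturbation class. Your identification of the role of \eqref{crp} and the translation of the three perturbation norms match the paper's (very terse) argument.
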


\begin{Remark}
The constraint that $u_\zeta$ needs to be close to the reference solution $\bar u$ in the theorems above is not a big restriction. 
This is clear, since Assumption \ref{A4} implies that $\bar u$ satisfies \eqref{E3.12}. Hence, $\bar u$ is a strict strong local 
minimizer of \Pb and, consequently, Theorem \ref{exsequ} ensures the existence of a family $\{u_{\zeta_k}\}$, 
${\zeta_k \in \hat \Gamma}$,
of strong local minimizers of problems \mbox{\rm (P$_{\zeta}$)}\xspace satisfying the conditions of Theorem \ref{Ssr} 
or \ref{SsHr}. 
\label{R4.1}
\end{Remark}

\begin{Example}[Tikhonov regularization]
We consider the optimal control problem
\[
\mbox{\rm (P$_{\lambda}$)}\xspace \  \min_{u \in \Uad} J_\lambda(u) := \int_Q L(x,t,y(x,t),u(x,t)) + 
\frac{\lambda}{2}\int_Q u(x,t)^2\dx\dt,
\]
subject to \reff{see1} and \reff{const}.
As before, $\bar u$ denotes a strict strong solution of problem (P)$\equiv\mbox{\rm (P$_{0}$)}\xspace$. 
We assume that $\bar u$ satisfies Assumption \ref{A4}$(A_0)$. From Theorem \ref{exsequ} we know that
for every sequence $\lambda_k>0$ converging to zero
there exists a sequence of strong local minimizer $\{ u_{\lambda_k}\}_{k=1}^\infty$ 
such that $u_{k}\to \bar u$ in $L^1(Q)$ for 
$k\to \infty$, thus for a sufficiently large $k_0$ we have that for all $k>k_0$
\begin{align*}
\|y_{\bar u}-y_{u_k}\|_{L^2(Q)}+\| p_{\bar u}-p_{u_k} \|   _{L^2(Q)}\le 
C\Big(\lambda_k \Big)^{\theta},
\end{align*}
\begin{equation*}
\|\bar u-u_{k}\|_{L^1(Q)}\le C\lambda_k,
\end{equation*}
where $\theta $ is defined in Theorem \ref{Ssr}.
\end{Example}


\section{Examples}

Here we present two examples that show particular applications in which different assumptions are involved. 

\begin{Example}[Negative curvature] \label{Ex2}
We begin with an optimal control problem, that has negative curvature.
The parabolic equation has the form
\begin{equation} \label{Eye}
\left\{ \begin{array}{lll}
\frac{dy}{dt}+\mathcal A y+\exp(y)& =u\  &\text{ in }\ Q,\\
y=0 \text{ on } \Sigma,\quad y(\cdot,0)& =  y_0\ &\text{ on } \Omega.\\
\end{array} \right.
\end{equation}
Let $0 \leq g \in L^2(Q)$ be a function satisfying the structural assumption \eqref{struct}. 
We consider the optimal control problem  
\begin{equation*}
        \min_{u\in \mathcal U} \Big \{ J(u):=\int_Q  (y_u +gu) \dx  \dt \Big \}
\end{equation*}
subject to \reff{Eye} and with control constraints 
\begin{equation}
		\Uad := \{u \in L^\infty(Q) \vert \ 0\leq u_a \le u \le u_b\ \text{ for a.a. } (x,t) \in Q\}.
	\end{equation}
By the weak maximum principle $y_{u_a}-y_u\leq0$ for all $u\in \mathcal U$ and $\bar u:=u_a$ constitutes an optimal solution. Further, by the weak maximum principle, the adjoint-state $\bar p$ and the linearized states 
$z_{\bar u,u-\bar u}$ for all $u \in \mathcal U$, are non-negative.
Moreover, we have
\begin{equation*}
   J'(\bar u)(u-\bar u) = \int_Q(\bar p+g)(u - \bar u)\dx\dt \ge 0, 
\end{equation*}
\begin{equation*}
J''(\bar u)(u-\bar u)^2=\int_Q w_{\bar u,u-\bar u}\dx \dt=\int_Q -\bar p \exp (\bar y) z_{\bar u,u-\bar u}^2\dx\dt< 
    0,
\end{equation*}
for all $u \in \Uad$. Since $g$ satisfies the structural assumption, there exists a constant $C>0$ such that
\[
\int_Q g(u-\bar u)\dx \dt\geq C\| u-\bar u\|_{L^1(Q)}^2\ \ \forall u\in \mathcal U.
\]
On the other hand, integrating by parts we obtain 
\begin{equation}
\int_Q \bar p(u-\bar u)\dx \dt=\int_Q z_{\bar u, u-\bar u}\dx \dt.
\label{abs}
\end{equation}
If  for $u\in \mathcal U$ with $\| u-\bar u\|_{L^1(Q)}$ or $\|y_u-\bar y\|_{L^\infty(Q)}$ sufficiently small such that 
\[
\frac{1}{2\|\bar p \exp(\bar y)\|_{L^\infty(Q)}}>\| z_{\bar u,u-\bar u}\|_{L^\infty(Q)},
\]
we can absorb the term $J''(\bar u)(u-\bar u)^2$ by estimating
\begin{align}
\int_Q \bar p(u-\bar u)\dx \dt+J''(\bar u)(u-\bar u)&=
\int_Q z_{\bar u, u-\bar u}(1-\bar p \exp (\bar y) z_{\bar u,u-\bar u})\dx\dt\\
&\geq \frac{1}{2}\int_Q z_{\bar u, u-\bar u}\dx\dt\geq \frac{K}{2}\|z_{\bar u, u-\bar u}\|_{L^2(Q)}^2,
\end{align}
where the last inequality is a consequence of the boundedness of $\mathcal U\subset L^\infty(Q)$ that implies the existence of a constant $K>0$ such that $\|z_{\bar u, u-\bar u}\|_{L^1(Q)}\geq K\|z_{\bar u, u-\bar u}\|_{L^2(Q)}^2$ for all $u\in \mathcal U$.
Altogether, we find 
\begin{align*}
J'(\bar u)(u-\bar u)+J''(\bar u)(u-\bar u)^2 &\geq C \| u-\bar u\|_{L^1(Q)}^2 +  \frac{K}{2} \| z_{\bar u, u-\bar u}\|_{L^2(Q)}^2 \\
& \geq \sqrt{\frac{CK}{2}} \| u-\bar u\|_{L^1(Q)}\| z_{\bar u, u-\bar u}\|_{L^2(Q)} \ \ \forall u\in \mathcal U.
\end{align*}
Thus, Assumption \ref{A4}$(A_1)$ is fulfilled and we can apply Theorem \ref{SsHr} to obtain a stability result.
\end{Example}

\begin{Example}[State stability]
We consider a tracking type objective functional where the control does not appear explicitly and for which
we will verify  $(A_2)$.
As perturbations we consider functions 
$\zeta =(\xi,\eta, \rho) \in D(\mathcal L^*)\times L^r(Q)\times L^r(Q) \times D(\mathcal L^*)$. 
Denote by $y_d$ the solution of this equation with $u = u_a$ and consider the problem
\[
   \min_{u \in \Uad} \Big\{J(u) :=  \frac{1}{2}\int_Q (y(x,t)-y_d(x,t))^2\dx\dt+\int_Q\eta y\dx\dt+\int_Q\rho u\dx\dt \Big\},
\]
subject to the same constraints as inn Example \ref{Ex2}.
For a local minimizer $\bar u$ of the unperturbed problem ($\zeta=0$), it holds 
\begin{align*}
J'(\bar u)(u-\bar u)&=\int_Q(\bar y(x,t)-y_d(x,t))z_{\bar u, u-\bar u}\ \text{dxdt}\geq 0 \quad \forall u \in \Uad,\\
J''(\bar u)(u-\bar u)&=\int_Q(\bar y(x,t)-y_d(x,t))w_{\bar u, u-\bar u}+z_{\bar u, u-\bar u}^2 \ \text{dxdt}\\
&=\int_Q(1-\bar p \,exp(\bar y ))z_{\bar u, u-\bar u}^2 \ \text{dxdt} \quad \forall u \in \Uad,
\end{align*}
where  $p$ solves
\begin{equation*}
\left\{\begin{array}{lll} 
-\frac{d \bar p}{dt}+\mathcal A^*\bar p + \exp(\bar y)\bar p = \bar y-y_d& \text{ in } Q,\\ 
\bar p = 0\text{ on } \Sigma, \ p(\cdot,T)=0\ &\text{ on } \Omega.\\
\end{array}\right.
\end{equation*}
If the optimal state tracks $y_d$ such that $\| \bar y-y_d\|_{L^r(Q)} \leq\frac{1}{2C_r \| \exp(\bar y)\|_{L^\infty(Q)}}$ we find that 
$(A_2)$ holds. From Theorem \ref{coro1} we obtain the existence of a constant $\kappa>0$ such that
\begin{equation*}
     \| y_{\bar u}-y_{\zeta}\|_{L^2(Q)}+\|  p_{\bar u}-p_\zeta\|_{L^2(Q)}
     \le \kappa\Big( \|\xi\|_{L^2(Q)} + \| \eta\|_{L^2(Q)} + \| \mathcal L^*\rho \|_{L^{2}(Q)} \Big),
\end{equation*}
for every perturbation $\zeta \in \hat\Gamma$
and for every local solution $u_\zeta$ of problem (P) with $\| y_{u_\zeta} - \bar u \|_{L^\infty(Q)} \leq \alpha$.
\end{Example}
\appendix
\section{Appendix} \label{secA1}

\begin{Lemma}
Suppose $r >1+ \frac{n}{2}$ and $s\in [1,\frac{n+2}{n})\cap[1,2]$. The following statement is fulfilled  for all $ u,\bar u \in \Uad$.
There exist positive constants $K_r$, $M_s$ and $N_{r,s}$ depending on $s$ and $r$ such that
\begin{align}
	&\|   y_u - y_{\bar u} - z_{\bar u,u - \bar u}\|   _{ C(\bar Q)}\le K_r\|   y_u - y_{\bar u}\|^2_{L^{2r}(Q)},\label{E2.11}\\
	&\|   y_u - y_{\bar u} - z_{\bar u,u - \bar u}\|   _{L^{s}(Q)}
	\le M_s \|   y_u-y_{\bar u}\|^{2-s}_{C(\bar Q)}\|   y_u-y_{\bar u}\|^s_{L^{s}(Q)},\label{E2.12}\\
	&\|   y_u - y_{\bar u} - z_{\bar u,u - \bar u}\|   _{L^{2}(Q)}
	\le N_{r,s}\|y_u-y_{\bar u}\|^{2-\frac{s^2}{2}}_{C(\bar Q)}\|y_u-y_{\bar u}\|_{L^{s}(Q)}^{\frac{s}{2}}.\label{E2.13L2}
\end{align}
\label{spreprep}
\end{Lemma}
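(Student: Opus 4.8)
The plan is to estimate the remainder $w := y_u - y_{\bar u} - z_{\bar u,u-\bar u}$ by writing down the linear parabolic problem it solves and then applying the a priori bounds of Theorem \ref{mainex} and Lemma \ref{estLs}. Writing $\delta := y_u - y_{\bar u}$ and subtracting the state equation \eqref{see1} for $y_{\bar u}$ from the one for $y_u$, and then subtracting the linearized equation \eqref{stddt} for $z_{\bar u,u-\bar u}$ (whose source is $u-\bar u$), I would obtain that $w$ solves
\[
\frac{\partial w}{\partial t} + \mathcal A w + \frac{\partial f}{\partial y}(\cdot,y_{\bar u})\,w = -g \ \text{ in } Q, \qquad w(\cdot,0)=0,
\]
where $g := f(\cdot,y_u) - f(\cdot,y_{\bar u}) - \frac{\partial f}{\partial y}(\cdot,y_{\bar u})\,\delta$ is the first order Taylor remainder of $f$. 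By Taylor's theorem with integral remainder and the uniform local bound $\bar C$ on $\frac{\partial^2 f}{\partial y^2}$ over the range $[-M_{\mathcal U},M_{\mathcal U}]$ fixed by Remark \ref{rebound} and Assumption \ref{exist.2}, one has the pointwise bound $|g|\le \tfrac{\bar C}{2}\,\delta^2$ a.e. in $Q$. Crucially, the coefficient $\alpha := \frac{\partial f}{\partial y}(\cdot,y_{\bar u})$ is nonnegative (Assumption \ref{exist.2}) and bounded, so Theorem \ref{mainex} and Lemma \ref{estLs} apply to $w$ with constants that do not depend on $u,\bar u\in\mathcal U$.

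For \eqref{E2.11}, I would apply the $C(\bar Q)$-estimate \eqref{clr} of Theorem \ref{mainex} to $w$, obtaining $\|w\|_{C(\bar Q)} \le C_r\|g\|_{L^r(Q)} \le \tfrac{\bar C}{2}C_r\|\delta^2\|_{L^r(Q)} = \tfrac{\bar C}{2}C_r\|\delta\|_{L^{2r}(Q)}^2$, which is \eqref{E2.11} with $K_r := \tfrac{\bar C}{2}C_r$. For \eqref{E2.12}, since $s\in[1,\tfrac{n+2}{n})$, I would apply estimate \eqref{aeqestLs} of Lemma \ref{estLs} with $\alpha = \frac{\partial f}{\partial y}(\cdot,y_{\bar u})$ to get $\|w\|_{L^s(Q)} \le C_{s'}\|g\|_{L^1(Q)}$, and then split the remainder by Hölder in the form $\|g\|_{L^1(Q)} \le \tfrac{\bar C}{2}\int_Q |\delta|^{2-s}|\delta|^{s}\dx\dt \le \tfrac{\bar C}{2}\|\delta\|_{C(\bar Q)}^{2-s}\|\delta\|_{L^s(Q)}^{s}$ (using $2-s\ge 0$), which yields \eqref{E2.12}.

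Finally, for the $L^2$-estimate \eqref{E2.13L2}, I would interpolate between the two norms already controlled, using $\|w\|_{L^2(Q)}^2 = \int_Q |w|^{2-s}|w|^{s}\dx\dt \le \|w\|_{C(\bar Q)}^{2-s}\|w\|_{L^s(Q)}^{s}$, and then insert \eqref{E2.11} (after reducing $\|\delta\|_{L^{2r}(Q)}\le |Q|^{1/(2r)}\|\delta\|_{C(\bar Q)}$) together with \eqref{E2.12}; collecting the resulting powers of $\|\delta\|_{C(\bar Q)}$ and $\|\delta\|_{L^s(Q)}$ gives the claimed bound. The only genuinely delicate point, and the one I would emphasize, is the uniformity of the constants $K_r$, $M_s$, $N_{r,s}$ over all $u,\bar u\in\mathcal U$: this rests entirely on the $\alpha$-independence of the constants in Theorem \ref{mainex} and Lemma \ref{estLs} together with the uniform local bound $\bar C$ on $f$ and its first two $y$-derivatives on the compact range $[-M_{\mathcal U},M_{\mathcal U}]$ guaranteed by \eqref{unibound} and Assumption \ref{exist.2}. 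Everything else is routine exponent bookkeeping.
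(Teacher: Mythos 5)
Your proposal is correct and follows essentially the same route as the paper's own proof: the authors derive the same linear parabolic equation for the remainder (using two applications of the mean value theorem where you use Taylor's theorem with integral remainder, yielding the identical pointwise bound $\bar C\,|y_u-y_{\bar u}|^2$ on the source), then apply the $\alpha$-uniform estimate \eqref{clr} for \eqref{E2.11}, the $L^1\to L^s$ estimate \eqref{aeqestLs} for \eqref{E2.12}, and the interpolation $\|\phi\|_{L^2}\le\|\phi\|_{C(\bar Q)}^{(2-s)/2}\|\phi\|_{L^s}^{s/2}$ for \eqref{E2.13L2}. One small caveat: carrying out your final exponent bookkeeping actually produces $\|y_u-y_{\bar u}\|_{C(\bar Q)}^{2-s^2/2}\|y_u-y_{\bar u}\|_{L^s(Q)}^{s^2/2}$ (which is what the paper's own proof derives as well), so the exponent $\frac{s}{2}$ in the displayed statement \eqref{E2.13L2} appears to be a typo for $\frac{s^2}{2}$ rather than something your argument, or the paper's, actually delivers.
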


\begin{proof}
Let us denote $\phi:= y_u - y_{\bar u} - z_{\bar u,u - \bar u} \in W(0,T) \cap C(\bar Q)$. From the equations satisfied by the three functions and by the mean value theorem $\phi$ satisfies
\[
\frac{d \phi}{dt}+\mathcal{A}\phi + \frac{\partial f}{\partial y}(x,t,y_{\bar u})\phi =  \Big[\frac{\partial f}{\partial y}(x,t,y_{\bar u}) - \frac{\partial f}{\partial y}(x,t,y_\theta)\Big](y_u - y_{\bar u}),
\]
where $y_\theta(x,t) = y_{\bar u}(x,t) + \theta(x,t)(y_u(x,t) - y_{\bar u}(x,t))$ with $\theta: Q \longrightarrow [0,1]$ measurable. Applying again the mean value theorem we obtain
\[
\frac{d \phi}{dt}+\mathcal{A}\phi + \frac{\partial f}{\partial y}(x,t,y_{\bar u})\phi =  \frac{\partial^2f}{\partial y^2}(x,t,y_\vartheta)(y_u - y_{\bar u})^2
\]
with $y_\vartheta(x,t) = y_{\bar u}(x,t) + \vartheta(x,t)(y_\theta(x,t) - y_{\bar u}(x,t))$ and $\vartheta:Q \longrightarrow [0,1]$ measurable. By Theorem \ref{mainex} and Remark \ref{Rlip} we infer the existence of constants $C_r, \bar C$ independent of $u, \bar u \in \Uad$ and $\frac{\partial f}{\partial y}(x,t,y_{\bar u})$ such that
\[
\|   \phi\|   _{C(\bar Q)} \le C_r\bar{C}\|   (y_{u}-y_{\bar u})^2\|   _{L^r(Q)} = C_r\bar{C}\|   y_{u}-y_{\bar u}\|   ^2_{L^{2r}(Q)},
\]
which proves \eqref{E2.11} with $K_r:= C_r\bar{C}$. To prove \eqref{E2.12}, we use Lemma \ref{estLs}, Remark \ref{Rlip} and \eqref{unibound} to obtain
\begin{align}
\|\phi\|  _{L^{s}(Q)}&\le C_{s'}\bar{C}\|(y_u-y_{\bar u})^2\|_{L^1(Q)}=C_{s'}\bar{C}\|   y_u-y_{\bar u}\|   ^2_{L^2(Q)}\le C_{s'}\bar{C}\|y_u-y_{\bar u}\|^{2-s}_{C(\bar Q)}\|   y_u-y_{\bar u}\|^s_{L^s(Q)}.
\label{LsLs}
\end{align}
Taking $M_s:=  C_{s'}\bar{C}$, \eqref{E2.12} follows. The inequality, \eqref{E2.13L2}, follows from \eqref{E2.12} and \eqref{E2.11} of Lemma \ref{spreprep} by estimating
\begin{equation*}
\begin{aligned}
\| \phi\| _{L^{2}(Q)}\le \|   \phi\|   _{C(\bar Q)}^{\frac{2-s}{2}}\|\phi\|   _{L^s(Q)}^{\frac{s}{2}}&\le K_{r}^{\frac{2-s}{2}}\|y_u-y_{\bar u}\|_{L^{2r}(Q)}^{\frac{2(2-s)}{2}}\bigg[M_s^{\frac{s}{2}}\|   y_u-y_{\bar u} \|   _{C(\bar Q)}^{\frac{(2-s)s}{2}}\|   y_u-y_{\bar u}\|   _{L^s(Q)}^{\frac{s^2}{2}}\bigg]\\
&\le K_{r}^{\frac{(2-s)}{2}}M_s^{\frac{s}{2}}\vert  Q\vert  ^{\frac{2-s}{2r}}\| y_u-y_{\bar u} \| _{C(\bar Q)}^{2-s+\frac{(2-s)s}{2}}\|   y_u-y_{\bar u}\|   _{L^s(Q)}^{\frac{s^2}{2}}.
\end{aligned}
\end{equation*}
Defining $N_{r,s}:=K_{r}^{\frac{(2-s)}{2}}M_s^{\frac{s}{2}}\vert  Q\vert  ^{\frac{2-s}{2r}}$ and noticing that $2-s+\frac{(2-s)s}{2}=2-\frac{s^2}{2}$ proves the claim.
\end{proof}

\begin{proof}{\em of Proposition \ref{sprep}.}
We prove \eqref{E33} by applying Theorem \ref{uniqueexlin} to 
$\psi:=z_{\bar u,v}-z_{u_\theta,v}$, that solves
\begin{align}
\frac{d \psi}{dt}+\mathcal{A}\psi + \frac{\partial f}{\partial y}(x,t,y_{\bar u})\psi &= \Big[ \frac{\partial f}{\partial y}(x,t,y_{u_\theta})-\frac{\partial f}{\partial y}(x,t,y_{\bar u})\Big]z_{u_\theta,v} = \frac{\partial^2f}{\partial y^2}(x,t,y_\vartheta)(y_{\bar u} - y_{u_\theta})z_{u_\theta,v}\label{last}.
\end{align}
To prove (\ref{E2.14.2}), we use \eqref{E2.13L2} with $s=\sqrt{2}$  to estimate
\begin{align*}
&\|y_u-y_{\bar u} \|_{L^2(Q)}\leq \| \phi\|_{L^2(Q)}+\|z_{\bar{u}, u-\bar u}\|   _{L^2(Q)}\leq N_{r,\sqrt{2}} \|y_u-y_{\bar u}\|_{C(\bar Q)}\|   y_u-y_{\bar u}\|_{L^{\sqrt{2}}(Q)}+\| z_{\bar{u}, u-\bar u}\| _{L^2(Q)}.
\end{align*}
Using fact that by the H\"older inequality $\| y_u-y_{\bar u} \|_{L^{\sqrt{2}}(Q)}\leq \vert  Q\vert^{\frac{1}{\sqrt{2}}-\frac{1}{2}}\|y_u-y_{\bar u}\| _{L^2(Q)}$, the claim follows.
For the other direction, we select again $s=\sqrt{2}$ in \eqref{E2.13L2} and find
\begin{align*}
\|z_{\bar{u}, u-\bar u}\|_{L^2(Q)}&\leq \|\phi\| _{L^2(Q)}+\| y_u-y_{\bar u}\|_{L^2(Q)}\\
&\leq  N_{r,\sqrt{2}} \| y_u-y_{\bar u}\|_{C(\bar Q)}\| y_u-y_{\bar u}\|_{L^{\sqrt{2}}(Q)}+\|y_u-y_{\bar u}\|_{L^2(Q)}\\
&\leq\bigg( N_{r,\sqrt{2}}\vert  Q\vert^{\frac{1}{\sqrt{2}}-\frac{1}{2}}\|y_u-y_{\bar u}\|_{C(\bar Q)}+1\bigg) \|y_u-y_{\bar u} \|_{L^2(Q)}.
\end{align*}
Finally, for \eqref{E2.15.2} we use \eqref{E33} and estimate 
\begin{equation*}
\begin{aligned}
\|   z_{\bar u,v}\|   _{L^2(Q)}&\leq \|   z_{\bar u,v}-z_{u,v} \|   _{L^2(Q)}+\|    z_{ u,v}\|   _{L^2(Q)}\le K_2\sqrt[2]{\vert  Q\vert  }\|   y_u - y_{\bar u}\| _{C(\bar Q)}\|   z_{\bar u,v}\|   _{L^2(Q)}+\|    z_{ u,v}\|   _{L^2(Q)}.
\end{aligned}
\end{equation*}
Choosing $\varepsilon = [2K_2\sqrt[2]{\vert  Q\vert  }]^{-1}$ proves the first part. The second inequality follows in a similar way. The estimates with respect to the $\|\cdot\|_{L^\infty(Q)}$ follow by similar reasoning, using \eqref{E2.11}.
\end{proof}

\begin{proof}{\em of Proposition \ref{equivasu}.}
Let us prove first the implication $(A_k) {\Rightarrow} (B_k)$ for any $k\in\{0,1,2\}$.
Given $u \in \Uad$,  by the mean value theorem
\[
\frac{d(y_u-\bar y)}{dt}+\mathcal{A}(y_u - \bar y) + \frac{\partial f}{\partial y}(x,\bar y + \theta(y_u - \bar y))(y_u - \bar y) = u - \bar u.
\]
Using \eqref{clr} in Theorem \ref{mainex} we obtain that
\[
\|y_u - \bar y\|_{C(\bar Q)} \le C_r\|u - \bar u\|_{L^r(Q)} \le C_r(2M_{\mathcal U})^{\frac{r-1}{r}}\|u - \bar u\|^{\frac{1}{r}}_{L^1(Q)}.
\]
Then, by $\tilde\alpha_k:= \frac{\alpha_k ^r}{C^r_r(2M_{\mathcal U})^{r-1}}$, we obtain that $(A_k)$ implies $(B_k)$ with 
$\gamma_k = \tilde \gamma_k$.

To prove the converse implication, $(B_k) {\Rightarrow} (A_k)$, we assume that ($B_k$) holds, but ($A_k$) fails. 
Then for every integer $l \ge 1$ there exists an element 
$u_l \in \Uad$ such that
\begin{equation}\label{E5.5}
\begin{aligned}
J'(\bar u)(u_l - \bar u) &+ J''(\bar u)(u_l - \bar u)^2< \frac{1}{l}\|u_l - \bar u\|^{2-k}_{L^1(Q)}\|   z_{\bar u,u_l - \bar u}\|^{k}_{L^2(Q)}\  \text{ and } \ \|   y_{u_l} - \bar y\|   _{C(\bar Q)} 
     < \frac{1}{l}.
\end{aligned}
\end{equation}
Since $\{u_l\}_{l = 1}^\infty \subset \Uad$ is bounded in $L^\infty(Q)$, we can extract a subsequence, 
denoted in the same way, such that $u_l \stackrel{*}{\rightharpoonup} u$ in $L^\infty(Q)$. On one side, 
\eqref{E5.5} implies that $y_{u_l} \to \bar y$ in $L^\infty(Q)$. On the other side, $u_l \stackrel{*}{\rightharpoonup} u$ 
in $L^\infty(Q)$ implies weak convergence in $L^r(Q)$. From \eqref{semilinweak}, the convergence 
$y_{u_l} \to y_u$ in $L^\infty(Q)$ follows. Then, $y_u = \bar y$ and, consequently, $u = \bar u$ holds.  
But Assumption($B_0$)
implies that $\bar u$ is bang-bang, and hence the weak convergence $u_l \stackrel{*}{\rightharpoonup} \bar u$  in 
$L^\infty(Q)$ yields the strong convergence $u_l \to \bar u$ in $L^1(Q)$; see \cite[Proposition 4.1 and Lemma 4.2]{DJV2022}. 
Then, for $k=0$, \eqref{E5.5} contradicts $(B_0)$. The same argument holds for $(B_1)$ and $(B_2)$ under the additional 
condition that $\bar u $ is bang-bang and noticing that $\|z_{\bar u,u_l-\bar u}\|_{C(\bar Q)} \leq 3/2\|y_{u_l}-\bar y\|_{C(\bar Q)}$ 
by Lemma \ref{sprep}.

\end{proof}

A proof of the following Lemma can be found in \cite[Lemma 3.5]{CDJ2022} or \cite[Lemma 3.5]{CMR}.
\begin{Lemma}
Given $\bar u \in \Uad$ with associated state $\bar y$. Then, the following estimate holds
\begin{equation}
\|y_{\bar u + \theta(u - \bar u)} - \bar y\|_{C(\bar Q)} \le B\|y_u - \bar y\|_{C(\bar Q)} \quad \forall \theta \in [0,1]\ \text{ and }\ \forall u \in \Uad,
\label{E3.14}
\end{equation}
where $B:= (2C_r\bar{C}\sqrt[r]{\vert  Q\vert  }M_{\mathcal U} + 1)$, $C_r$ is the constant of Lemma \ref{C-est} and $\bar{C}$ is the one from Remark \ref{Rlip}.
\label{L3.1}
\end{Lemma}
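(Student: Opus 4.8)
The plan is to compare the state $y_{u_\theta}$ associated with the convex combination $u_\theta := \bar u + \theta(u-\bar u)$ against \emph{both} $\bar y$ and $y_u$ simultaneously. First I would record that, since $\mathcal U$ is convex and $\bar u, u \in \mathcal U$, we have $u_\theta \in \mathcal U$ for every $\theta \in [0,1]$, so the uniform bound \eqref{unibound} applies to all three states $\bar y, y_u, y_{u_\theta}$. Writing $\phi_\theta := y_{u_\theta} - \bar y$ and $\delta := y_u - \bar y$, subtraction of the state equations \eqref{see1} and an integral mean value theorem give
\[
\frac{d\phi_\theta}{dt} + \mathcal A\phi_\theta + a_\theta \phi_\theta = \theta(u - \bar u), \qquad \frac{d\delta}{dt} + \mathcal A\delta + b\,\delta = u - \bar u,
\]
with homogeneous initial and boundary data, where $a_\theta := \int_0^1 f_y(\cdot, \bar y + s\phi_\theta)\,ds$ and $b := \int_0^1 f_y(\cdot, \bar y + s\delta)\,ds$ are nonnegative (by Assumption \ref{exist.2}) and bounded by $\bar C$ (Remark \ref{Rlip}). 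Applying \eqref{clr} directly to the first equation would only bound $\|\phi_\theta\|_{C(\bar Q)}$ by $\|u - \bar u\|_{L^r(Q)}$, which \emph{cannot} be controlled by $\|y_u - \bar y\|_{C(\bar Q)}$; hence the comparison with $\delta$ is essential.

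The key step is to cancel the common source $\theta(u - \bar u)$. Setting $w := \phi_\theta - \theta\delta$ and using $\theta(u-\bar u) = \frac{d(\theta\delta)}{dt} + \mathcal A(\theta\delta) + b\,\theta\delta$ from the second equation, a direct calculation (inserting $\phi_\theta = w + \theta\delta$) yields
\[
\frac{dw}{dt} + \mathcal A w + a_\theta w = \theta(b - a_\theta)\,\delta, \qquad w(\cdot,0) = 0, \quad w|_\Sigma = 0.
\]
Since $a_\theta \ge 0$ and $\theta(b - a_\theta)\delta \in L^\infty(Q) \subset L^r(Q)$, I would invoke the $C(\bar Q)$-estimate \eqref{clr} of Theorem \ref{mainex}, whose constant $C_r$ is independent of the zeroth-order coefficient. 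This $\alpha$-independence is precisely what makes the bound uniform over all $u \in \mathcal U$ and all $\theta \in [0,1]$, and it is the reason the independence statement in Theorem \ref{mainex} was needed. We obtain
\[
\|w\|_{C(\bar Q)} \le C_r \|\theta(b - a_\theta)\delta\|_{L^r(Q)} \le C_r\sqrt[r]{|Q|}\,\|b - a_\theta\|_{L^\infty(Q)}\,\|\delta\|_{C(\bar Q)}.
\]

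It remains to bound the coefficient mismatch. A routine estimate using the Lipschitz continuity of $f_y$ in $y$ (constant $\bar C$ on the relevant range, Remark \ref{Rlip}) together with $|\delta - \phi_\theta| = |y_u - y_{u_\theta}| \le 2M_{\mathcal U}$ from \eqref{unibound} gives $\|b - a_\theta\|_{L^\infty(Q)} \le 2\bar C M_{\mathcal U}$. Combining this with the previous display and the triangle inequality $\|\phi_\theta\|_{C(\bar Q)} \le \|w\|_{C(\bar Q)} + \theta\|\delta\|_{C(\bar Q)} \le \|w\|_{C(\bar Q)} + \|\delta\|_{C(\bar Q)}$ yields $\|\phi_\theta\|_{C(\bar Q)} \le (2C_r\bar C\sqrt[r]{|Q|}M_{\mathcal U} + 1)\,\|y_u - \bar y\|_{C(\bar Q)} = B\,\|y_u - \bar y\|_{C(\bar Q)}$, as claimed. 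The main obstacle is conceptual rather than computational: the two linearized equations carry \emph{different} zeroth-order coefficients $a_\theta$ and $b$, so one cannot compare them term by term; the device $w = \phi_\theta - \theta\delta$ reduces everything to the single scalar quantity $b - a_\theta$, whose smallness on the scale $\bar C M_{\mathcal U}$ (together with the coefficient-independent estimate \eqref{clr}) closes the argument.
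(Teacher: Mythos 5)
Your proof is correct and recovers the stated constant $B$ exactly; the device of subtracting $\theta$ times the equation for $y_u-\bar y$ to cancel the source and reduce everything to the coefficient mismatch $b-a_\theta$, combined with the $\alpha$-independent estimate \eqref{clr}, is precisely the argument of the references the paper cites for this lemma (the paper itself gives no proof, deferring to \cite[Lemma 3.5]{CDJ2022} and \cite[Lemma 3.5]{CMR}, which use the pointwise mean value theorem where you use its integral form). No gaps.
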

We proof the analogous statement for the adjoint-state. For an elliptic state equation, it was also done in \cite[Lemma 3.7]{CDJ2022}.
\begin{Lemma}
Given $\bar u \in \Uad$ with associated state $\bar y$ and adjoint-state $\bar p$, there exists a constant $\tilde B>0$ such that
\begin{equation}
\|p_{\bar u + \theta(u - \bar u)} - \bar p\|_{C(\bar Q)} \le \tilde B(\|y_u - \bar y\|_{C(\bar Q)} +\vert m\vert \| u-\bar u\|_{L^1(Q)}^{\frac{1}{r}}),
\label{E3.16}
\end{equation}
for all $\theta \in [0,1] \text{ and }u \in \Uad$.
\label{L3.2}
\end{Lemma}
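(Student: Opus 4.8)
The plan is to mirror the proof of Lemma~\ref{L3.1}, writing down the equation satisfied by the difference of the two adjoint states and then invoking the coefficient-independent regularity estimate \eqref{clr}. Set $u_\theta := \bar u + \theta(u-\bar u)$ and $\phi := p_{u_\theta} - \bar p$. Both $p_{u_\theta}$ and $\bar p$ solve the adjoint equation \eqref{E3.6}, and recalling from \eqref{genobj} that $\frac{\partial L}{\partial y}(x,t,y,u) = \frac{\partial L_0}{\partial y}(x,t,y) + mu$, I would subtract the two adjoint equations and, after adding and subtracting $\frac{\partial f}{\partial y}(x,t,y_{u_\theta})\bar p$ so that the unknown appears with a nonnegative zero-order coefficient, obtain
\[
-\frac{d\phi}{dt} + \mathcal{A}^*\phi + \frac{\partial f}{\partial y}(x,t,y_{u_\theta})\,\phi = G,
\]
where
\[
G := \frac{\partial L_0}{\partial y}(x,t,y_{u_\theta}) - \frac{\partial L_0}{\partial y}(x,t,\bar y) - \Big[\frac{\partial f}{\partial y}(x,t,y_{u_\theta}) - \frac{\partial f}{\partial y}(x,t,\bar y)\Big]\bar p + m(u_\theta - \bar u).
\]
This is a backward linear equation for $\phi$ with zero-order coefficient $\alpha := \frac{\partial f}{\partial y}(x,t,y_{u_\theta}) \ge 0$ by Assumption~\ref{exist.2}.

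Next I would estimate $\phi$ by applying the $C(\bar Q)$-bound \eqref{clr}. The decisive point is that the constant $C_r$ of Theorem~\ref{mainex} is \emph{independent of the coefficient} $\alpha$; moreover, by the change of variable $t \mapsto T-t$ already used in the proof of Lemma~\ref{estLs}, the estimate \eqref{clr} transfers verbatim to the backward equation above, giving $\|\phi\|_{C(\bar Q)} \le C_r \|G\|_{L^r(Q)}$ with $C_r$ depending neither on $\theta$ nor on $u$.

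It then remains to bound $\|G\|_{L^r(Q)}$. Using the local Lipschitz continuity of $\frac{\partial L_0}{\partial y}$ and $\frac{\partial f}{\partial y}$ with the common constant $\bar C$ from Remark~\ref{Rlip}, together with the fixed bound $\|\bar p\|_{C(\bar Q)}$, I would estimate pointwise $|G| \le \bar C\,(1 + \|\bar p\|_{C(\bar Q)})\,|y_{u_\theta} - \bar y| + |m|\,|u_\theta - \bar u|$. Taking $L^r$ norms, I would control the state term by Lemma~\ref{L3.1} via $\|y_{u_\theta} - \bar y\|_{L^r(Q)} \le |Q|^{1/r} B \|y_u - \bar y\|_{C(\bar Q)}$, and the control term by the interpolation $\|u - \bar u\|_{L^r(Q)} \le (2M_{\mathcal U})^{(r-1)/r}\|u - \bar u\|_{L^1(Q)}^{1/r}$, which is valid since $\|u - \bar u\|_{L^\infty(Q)} \le 2M_{\mathcal U}$ by \eqref{unibound}. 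Collecting constants into $\tilde B$ then yields exactly \eqref{E3.16}.

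The main obstacle is essentially bookkeeping rather than a genuine difficulty: one must split $\frac{\partial f}{\partial y}(\cdot,y_{u_\theta})p_{u_\theta} - \frac{\partial f}{\partial y}(\cdot,\bar y)\bar p$ so that $\phi$ is isolated on the left with its nonnegative coefficient while the residual becomes a controllable source, and one must be sure that the regularity constant is unaffected by the $\theta$- and $u$-dependent coefficient $\alpha$, which is precisely the $\alpha$-independence asserted in Theorem~\ref{mainex}. The appearance of $\|u - \bar u\|_{L^1(Q)}^{1/r}$ (rather than a pure state difference) in the control term is unavoidable, reflecting that $m(u_\theta - \bar u)$ cannot be absorbed into $\|y_u - \bar y\|_{C(\bar Q)}$; when $m = 0$ the estimate collapses to the pure state-difference bound, in perfect analogy with Lemma~\ref{L3.1}.
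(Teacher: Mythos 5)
Your proposal is correct and follows essentially the same route as the paper: subtract the two adjoint equations so that the difference $p_{u_\theta}-\bar p$ appears with a nonnegative zero-order coefficient, push the remaining terms (including $m(u_\theta-\bar u)$) to the right-hand side as a source, apply the $\alpha$-independent $C(\bar Q)$-estimate of Theorem \ref{mainex} to the backward equation, and then control the source via Remark \ref{Rlip}, Lemma \ref{L3.1}, and the $L^\infty$--$L^1$ interpolation for $\|u-\bar u\|_{L^r(Q)}$. The only cosmetic differences are that the paper keeps the coefficient $\frac{\partial f}{\partial y}(x,t,\bar y)$ acting on the difference (moving $[\frac{\partial f}{\partial y}(x,t,\bar y)-\frac{\partial f}{\partial y}(x,t,y_\theta)]p_\theta$ to the right) and phrases the Lipschitz bounds via the mean value theorem, neither of which changes the argument.
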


\begin{proof}
Let us prove \eqref{E3.16}. Given $u \in \mathcal U$ and $\theta \in [0,1]$, let us denote $u_\theta = \bar u + \theta(u - \bar u)$, $y_\theta = y_{u_\theta}$, and $p_\theta = p_{u_\theta}$. Subtracting the equations satisfied by $p_\theta$ and $\bar p$ we get with the mean value theorem
\begin{align*}
&-\frac{d}{dt}(p_\theta - \bar p)+\mathcal A^ *(p_\theta - \bar p) + \frac{\partial f}{\partial y}(x,t,\bar y)(p_\theta - \bar p) = \frac{\partial L}{\partial y}(x,t,y_\theta,u_\theta) - \frac{\partial L}{\partial y}(x,t,\bar y,\bar u)\\ 
&+ \Big[\frac{\partial f}{\partial y}(x,t,\bar y) - \frac{\partial f}{\partial y}(x,t,y_\theta)\Big]p_\theta\\
& = \Big[\frac{\partial^2L}{\partial y^2}(x,t,y_\vartheta) -p_\theta\frac{\partial^2f}{\partial y^2}(x,t,y_\vartheta)\Big](y_\theta - \bar y)+m(u_\theta -\bar u),
\end{align*}
where $y_\vartheta = \bar y + \vartheta(y_\theta - \bar y)$ for some measurable function $\vartheta:Q \longrightarrow [0,1]$. Now, we can apply again Theorem \ref{mainex} and Remark \ref{Rlip} to conclude from the above equation
\begin{align*}
\|p_\theta - \bar p\| _{C(\bar Q)}&\le C_r(\bar{C} + M_{\mathcal U} \bar{C})\sqrt[r]{\vert  Q\vert}\|y_\theta - \bar y\|_{C(\bar Q)}+\vert m\vert \theta C_r\|u-\bar u\|_{L^r(Q)}\\
&\leq \tilde B\|y_u - \bar y\|_{C(\bar Q)}+\vert m\vert \|u-\bar u\|_{L^1(Q)},
\end{align*}
where $\tilde B:=C_r((\bar{C} + M_{\mathcal U} \bar{C})\sqrt[r]{\vert  Q\vert}B+(2M_\mathcal U)^{\frac{r-1}{r}} )$, with $B$ being the constant from Lemma \ref{L3.1}.
Then, \eqref{E3.16} follows by applying Lemma \ref{L3.1}.
\end{proof}

\begin{proof}{\em of Lemma \ref{biglemmat}.}
The second variation of the objective functional is given by Theorem \ref{T3.1}. Let us denote $u_\theta$, $y_\theta$, and $\varphi_\theta$ as in the proof of Lemma \ref{L3.2}. From \eqref{E3.5} we obtain
\begin{align*}
&\vert  [J''(\bar u + \theta(u - \bar u)) - J''(\bar u)](u - \bar u)^2\vert\\
&\le \int_Q\Big\vert  \Big[\frac{\partial^2L_0}{\partial y^2}(x,t,y_\theta) - \frac{\partial^2L_0}{\partial y^2}(x,t,\bar y)\Big] z_{u_\theta,u - \bar u}^2\Big\vert   \dx\dt+ \int_Q\Big\vert  (\bar\varphi - \varphi_\theta)\frac{\partial^2f}{\partial y^2}(x,t,y_\theta)z_{u_\theta,u - \bar u}^2\Big\vert  \dx\dt\\
&+ \int_Q\Big\vert  \bar\varphi\Big[\frac{\partial^2f}{\partial y^2}(x,t,\bar y) - \frac{\partial^2f}{\partial y^2}(x,t,y_\theta)\Big]z_{u_\theta,u - \bar u}^2\Big\vert  \dx\dt\\
& + \int_Q\Big\vert  \Big[\frac{\partial^2L_0}{\partial y^2}(x,t,\bar y)- \bar\varphi\frac{\partial^2f}{\partial y^2}(x,t,\bar y)\Big](z^2_{u_\theta,u - \bar u} - z^2_{\bar u,u - \bar u})\Big\vert  \dx\dt+2\Big\vert  \int_Q (u-\bar u)m\Big[ z_{u_\theta,u - \bar u}-z_{\bar u,u - \bar u}\Big]\dx\dt\Big\vert\\
& = I_1 + I_2 + I_3 + I_4+I_5.
\end{align*}
We consider the case $m=0$ first. Let us consider the terms $I_i$, $i\in \{1,..,4\}$.
For $I_1$, we deduce from Remark \ref{Rlip}, \eqref{E3.14}, and \eqref{E2.15.2} that for every $\rho_1 > 0$ there exists $\varepsilon_1 > 0$ such that
\[
I_1 \le \rho_1\|z_{\bar u,u - \bar u}\|^2_{L^2(Q)}\quad  \text{if}\quad \|y_u - \bar y\|_{C(\bar Q)} < \varepsilon_1.
\]
To deal with $I_2$, we use Remark \ref{Rlip}, \eqref{E2.15.2}, and \eqref{E3.16} to obtain for every $\rho_2 > 0$ the existence of a $\varepsilon_2 > 0$ such that
\[
I_2 \le \rho_2\|z_{\bar u,u - \bar u}\|^2_{L^2(Q)}\quad \text{if}\quad \|y_u - \bar y\|_{C(\bar Q)} < \varepsilon_2.
\]
The estimate for $I_3$ follows from \eqref{E2.15.2} and Remark \ref{Rlip}. Thus for every $\rho_3>0$, there exists $\varepsilon_3>0$ with
\[
I_3 \le \rho_3\|z_{\bar u,u - \bar u}\|^2_{L^2(Q)} \quad \text{if} \quad\|y_u - \bar y\|_{C(\bar Q)} < \varepsilon_3.
\]
For $I_4$ we infer by Remark \ref{Rlip}, \eqref{E2.13L2}, \eqref{E2.15.2} and \eqref{E3.14} that for every $\rho_4 > 0$ there exists $\varepsilon_4> 0$ such that
\begin{align*}
I_4 &\le (\bar{C} + M_{\mathcal U}\bar{C})\|   z_{u_\theta,u - \bar u} + z_{\bar u,u - \bar u}\|_{L^2(Q)}\| z_{u_\theta,u - \bar u} - z_{\bar u,u - \bar u}\|   _{L^2(Q)}\\
& \le \frac{C_{2}5}{2}(\bar{C} + M_{\mathcal U}\bar{C})\| z_{\bar u,u - \bar u}\|   _{L^2(Q)}\|   y_\theta - \bar y\| _{C(\bar Q)}\|   z_{\bar u,u - \bar u}\|   _{L^2(Q)}\\
&\le \rho_4\| z_{\bar u,u - \bar u}\|^2_{L^2(Q)} \quad \text{if}\quad \|y_u - \bar y\|   _{C(\bar Q)} < \varepsilon_4.
\end{align*}
Taking $\rho_i$ small enough such that $I_i < \frac{\rho}{4}$ for every $i\in \{1,..,4\}$ and setting $\varepsilon = \min_{1 \le i \le 4}\varepsilon_i$, the first claim follows. \\
For the case $m\neq 0$, we need to additionally estimate $I_5$ and reconsider the term $I_2$. We recall that for the case $m\neq 0$, we assume that $\| u-\bar u\|_{L^1(Q)}$ is sufficiently small.
To estimate $I_5$ we use that $z_{\bar u,v}$ satisfies equation \eqref{stddt} and that $\psi:=z_{\bar u,u - \bar u}-z_{u_\theta,u - \bar u}$ solves
\begin{align}
\frac{d \psi}{dt}+\mathcal{A}\psi + \frac{\partial f}{\partial y}(x,t,y_{\bar u})\psi &= \Big[ \frac{\partial f}{\partial y}(x,t,y_{u_\theta})-\frac{\partial f}{\partial y}(x,t,y_{\bar u})\Big]z_{u_\theta,u - \bar u}= \frac{\partial^2f}{\partial y^2}(x,t,y_\vartheta)(y_{\bar u} - y_{u_\theta})z_{u_\theta,u - \bar u}\label{theta2},
\end{align}
where we used the mean value theorem to infer the existence of a function $\vartheta$ such that \eqref{theta2} holds.
We use Remark \ref{Rlip}, \eqref{E2.15.2}, Lemma \ref{estLs} and \eqref {E3.14} to estimate
\begin{align*}
&2 \Big\vert  \int_Q (u-\bar u)m\Big[ z_{u_\theta,u - \bar u}-z_{\bar u,u - \bar u}\Big]\dx\dt \Big\vert \leq2\vert  m\vert \| u - \bar u\|_{L^{s'}(Q)}\|  z_{u_\theta,u - \bar u}-z_{\bar u,u - \bar u}\|_{L^s(Q)}\\
&\leq 2\vert m\vert (2M_{\mathcal U})^{\frac{s'-1}{s'}} \| u - \bar u\|_{L^{1}(Q)}^{\frac{1}{s'}}\|  z_{u_\theta,u - \bar u}-z_{\bar u,u - \bar u}\|_{L^s(Q)}\\
&\le \vert m\vert \bar{C}C_{s'}B(2M_{\mathcal U})^{\frac{s'-1}{s'}}\| u - \bar u\|_{L^{1}(Q)}^{\frac{1}{s'}} \|y_{u_\theta} - \bar y\|    _{L^2(Q)}\| z_{\bar u_{\theta},u - \bar u}\| _{L^2(Q)}\\
&\leq \rho_5\| z_{\bar u,u - \bar u}\| _{L^2(Q)}^2 \quad \text{if}\quad \|u - \bar u\|_{L^{1}(Q)} < \varepsilon_5.
\end{align*}
We remark, that to make the last step, we used that \eqref {E3.14} holds also if the $\|\cdot\|_{L^\infty(Q)}$-norm is exchanged with the $\|\cdot\|_{L^2(Q)}$-norm. This can be seen in the proof of \cite[Lemma 3.5]{CDJ2022}.
The validity of the estimates for $I_i$ for $i\in\{1,3,4\}$ holds, noticing that by \eqref{clr}, $\|u-\bar u\|_{L^1(Q)}<\frac{\varepsilon^r}{C_r^r(2M_\mathcal U)^\frac{r-1}{2r}}$, implies $\|y_u - \bar y\|_{C(\bar Q)} < \varepsilon$. For the term $I_2$ we use Remark \ref{Rlip}, \eqref{E2.15.2}, and \eqref{E3.16}, to find for any $\rho_2>0$ a $\varepsilon_2>0$ such that
\begin{align}
I_2 &\le\frac{9}{4}\bar{C}\tilde B(C_r(2M_{\mathcal U})^\frac{r-1}{r}+\vert m\vert )\| u-\bar u\|_{L^1(Q)}^{\frac{1}{r}} \| z_{\bar u,u - \bar u}\|_{L^2(Q)}^2\leq \rho_2\|z_{\bar u,u - \bar u}\|^2_{L^2(Q)}\quad \text{if}\quad \| u - \bar u\|   _{L^1(Q)} < \varepsilon_2.
\end{align}
Taking $\varepsilon:= \min_{1 \le i \le 5}\varepsilon_i$, completes the proof.
\end{proof}

\begin{proof}{\em of Corollary \ref{bigcorollary}.}
Let $s\in[1,\frac{n+2}{n})\cap[1,2]$. We first consider the case $m=0$. Using that $L_0$ and $f$ satisfy the assumption in Remark \ref{Rlip} and arguing as in the proof of 
Lemma \ref{biglemmat}, there exists $\varepsilon>0$  and a constant $P>0$ such that
\[
\vert  [J''(\bar u + \theta(u - \bar u)) - J''(\bar u)](u-\bar
u)^2\vert   <P \|   y_u-y_{\bar u}\| _{L^\infty(Q)}\|   z_{\bar u,u-\bar
u}\| _{L^2(Q)}^2
\] 
for all $u\in \mathcal U$ with $\| y_u-y_{\bar u}\|_{L^\infty(Q)}<\varepsilon$.
To prove \eqref{E3.17.1}, we select $l_1,l_2\geq 0$ with $l_1+l_2=1$ and use the estimate
\begin{equation}
\|   z_{\bar u,u-\bar u}\|_{L^2(Q)}\le \|z_{\bar u,u-\bar u}\|_{C(\bar Q)}^{\frac{2-s}{2}}\|u-\bar u \|  _{L^1(Q)}^{\frac{s}{2}}.
\label{helpful}
\end{equation}
By \eqref{helpful}, \eqref{clr}, \eqref{aeqestLs} and \eqref{E2.13L2}, we find
\begin{equation}
\begin{aligned}
\|y_u - \bar y\|_{C(\bar Q)} \|z_{\bar u,u - \bar u}\| _{L^2(Q)}^2&\leq \| y_u - \bar y\| _{C(\bar Q)}  \|z_{\bar u,u - \bar u}\|_{L^2(Q)} \| z_{\bar u,u - \bar u}\|   _{C(\bar Q)}^{(2-s)/2}\|u-\bar u\|_{L^1(Q)}^{s/2}\\
&\leq C_{s'}\sup_{\mathcal U} \| u - \bar u\|_{L^\infty(Q)}^{(s-1)/(s')}\|y_u - \bar y\|   _{C(\bar Q)}^{l_1+l_2}  \|    z_{\bar u,u - \bar u}\|_{L^2(Q)} \|u-\bar u\| _{L^1(Q)}^{(2-s)/(2s')+s/2}\\
&\leq C_{s'}^2\tilde M_{\mathcal U}\|y_u - \bar y\|_{C(\bar Q)}^{l_1} \| z_{\bar u,u - \bar u}\|   _{L^2(Q)}\|   u-\bar u\|   _{L^1(Q)}^{l_2/s'}  \| u-\bar u\|_{L^1(Q)}^{(2-s)/(2s')}\| u-\bar u\| _{L^1(Q)}^{s/2},
\end{aligned}
\label{Em.1}
\end{equation}
with $\tilde M:=M_{\mathcal U}^{\frac{s-1}{s'}(l_2+\frac{2-s}{2})}$. We select $l_2$ such that
\begin{equation*}
\frac{l_2}{s'}+\frac{2-s}{2s'}+\frac{s}{2}=1.
\end{equation*}
Using $1/s'=1-1/s$, this is equivalent to $(1+l_2)(1-1/s)+s/2(1-1+1/s)=1$,
thus we find 
\begin{equation*}
l_2=s'/2-1.
\end{equation*}
Defining $\varepsilon:=\frac{1}{C_{s'}^2\tilde M}\rho^{\frac{1}{l_1}}$ proves the first claim.
For the proof of \eqref{E3.17.2} we use \eqref{clr}, \eqref{aeqestLs} and \eqref{E2.13L2} to infer
\begin{equation}
\begin{aligned}
&\|   y_u - \bar y\|   _{C(\bar Q)} \|    z_{\bar u,v}\|   _{L^2(Q)}^2\leq C_{s'} \|   y_u - \bar y\|   _{C(\bar Q)} \|    z_{\bar u,v}\|   _{C(\bar Q)}^{(2-s)}\|    u-\bar u\|   _{L^1(Q)}^{s}\\
&\leq C_{s'}^2M_{\mathcal U}^{\frac{s'-1}{s'}}\|y_u - \bar y\|_{C(\bar Q)}^{l_1+l_2} \|u-\bar u\|_{L^1(Q)}^{(2-s)/s'}\|u-\bar u\|_{L^1(Q)}^{s}\\
&\leq C_{s'}^3 \tilde M\|   y_u - \bar y\|   _{C(\bar Q)}^{l_1}\|   u-\bar u\|   _{L^1(Q)}^{l_2/s'}  \|   u-\bar u\|   _{L^1(Q)}^{(2-s)/s'}\|    u-\bar u\|   _{L^1(Q)}^{s},
\end{aligned}
\label{Em.2}
\end{equation}
with $\tilde M:=M_{\mathcal U}^{\frac{s-1}{s'}(l_2+2-s)}$.
Select $l_2$ such that
\begin{equation*}
\frac{l_2}{s'}+\frac{2-s}{s'}+s=2.
\end{equation*}
By $\frac{1}{s'}=1-\frac{1}{s}$, this is equivalent to $l_2=\frac{2-s}{s-1}$.
Defining $\varepsilon:=\frac{1}{C_{s'}^3\tilde M}\rho^{\frac{1}{l_1}}$ proves the case for $m=0$. For $m\ne 0$, 
we recall, that the $L^1(Q)$-distance of the controls is assumed to be sufficiently small. But by the estimate \eqref{clr}, this implies 
that the states are close and we proceed as displayed.
\end{proof}

\bibliography{CJV}
\bibliographystyle{plain}

\end{document}